\theoremstyle{plain}
\newtheorem{theorem}{Theorem}[section]
\newtheorem{lemma}[theorem]{Lemma}
\newtheorem{corollary}[theorem]{Corollary}
\theoremstyle{definition}
\newtheorem{definition}[theorem]{Definition}
\newtheorem*{notation}{Notation}
\theoremstyle{remark}
\newtheorem{remark}[theorem]{Remark}
\numberwithin{equation}{section}
\newcommand{\subalign}[1]{%
  \vcenter{%
    \Let@ \restore@math@cr \default@tag
    \baselineskip\fontdimen10 \scriptfont\tw@
    \advance\baselineskip\fontdimen12 \scriptfont\tw@
    \lineskip\thr@@\fontdimen8 \scriptfont\thr@@
    \lineskiplimit\lineskip
    \ialign{\hfil$\m@th\scriptstyle##$&$\m@th\scriptstyle{}##$\crcr
      #1\crcr
    }%
  }
}
\DeclareMathOperator{\arcosh}{arcosh}
\title[Fast SGL Fourier transforms for scattered data]{Fast SGL Fourier transforms for scattered data\addtitlefootnote}
\author{Christian W\"ulker}
\address{Christian W\"ulker - Department of Mechanical Engineering, Johns Hopkins University}
\email{christian.wuelker@jhu.edu}
\date{\today}
\begin{document}

\begin{abstract}
Spherical Gauss-Laguerre (SGL) basis functions, \textit{i.\,e.}, normalized functions of the type $L_{n-l-1}^{(l + 1/2)}(r^2) r^l Y_{lm}(\vartheta,\varphi)$, $|m| \leq l < n \in \mathbb{N}$, $L_{n-l-1}^{(l + 1/2)}$ being a generalized Laguerre polynomial, $Y_{lm}$ a spherical harmonic, constitute an orthonormal polynomial basis of the space $L^2$ on $\mathbb{R}^3$ with radial Gaussian (multivariate Hermite) weight $\exp(-r^2)$. We have recently described fast Fourier transforms for the SGL basis functions based on an exact quadrature formula with certain grid points in $\mathbb{R}^3$. In this paper, we present fast SGL Fourier transforms for scattered data. The idea is to employ well-known basal fast algorithms to determine a three-dimensional trigonometric polynomial that coincides with the bandlimited function of interest where the latter is to be evaluated. This trigonometric polynomial can then be evaluated efficiently using the well-known non-equispaced FFT (NFFT). We prove an error estimate for our algorithms and validate their practical suitability in extensive numerical experiments.
\end{abstract}

\keywords{Spherical Gauss-Laguerre basis functions, generalized FFTs, non-equispaced data}

\maketitle

\section{Introduction}
\label{sec:introduction}
Let $\|\cdot\|_2$ denote the standard Euclidean norm on $\mathbb{R}^3$. We consider the weighted $L^2$ space
\begin{equation*}
H \,\coloneqq\, \left\lbrace f : \mathbb{R}^3 \to \mathbb{C} ~\textnormal{Lebesgue measurable and} \int_{\mathbb{R}^3} |f(x)|^2 \, \mathrm{e}^{-\|x\|_2^2} \, \mathrm{d}x \,<\, \infty \right\rbrace
\end{equation*}
equipped with the inner product
\begin{equation*}
\langle f, g \rangle_H \,\coloneqq \int_{\mathbb{R}^3} f(x)\, \overline{g(x)}\, \mathrm{e}^{-\|x\|_2^2}\, \mathrm{d}x, \quad\quad f, g \in H,
\end{equation*}
and induced norm $\|\cdot\|_H \coloneqq \sqrt{\langle\cdot,\cdot\rangle_H}$. As the (classical) multivariate Hermite polynomials, spherical Gauss-Laguerre basis functions are orthogonal polynomials in the Hilbert space $H$. They arise from a particular construction approach in spherical coordinates. The latter are defined as \emph{radius} $r \in [0,\infty)$, \emph{polar angle} $\vartheta \in [0, \pi]$, and \emph{azimuthal angle} $\varphi \in [0, 2\pi)$, connected with Cartesian coordinates $x$, $y$, and $z$ via
\begin{align*}
x \,&=\, r \sin\vartheta \cos\varphi,\\
y \,&=\, r \sin\vartheta \sin\varphi,\\
z \,&=\, r \cos\vartheta.
\end{align*}

\begin{definition}
The \emph{spherical Gauss-Laguerre} (SGL) \emph{basis function} of orders $n \in \mathbb{N}$, $l \in \{0,\dots,n-1\}$, and $m \in \{-l,\dots,l\}$ is defined as
\begin{equation*}
H_{nlm} \,:\, \mathbb{R}^3 \,\to\, \mathbb{C}, \quad\quad H_{nlm}(r,\vartheta,\varphi) \,\coloneqq\, N_{nl} \, R_{nl}(r) \, Y_{lm}(\vartheta,\varphi),
\end{equation*}
wherein
\begin{equation*}
N_{nl} \,\coloneqq\, \sqrt{\frac{2(n-l-1)!}{\Gamma(n+1/2)}}
\end{equation*}
is a normalization constant, $Y_{lm}$ is the spherical harmonic of degree $l$ and order $m$ (see \citep[Sect.\,1.6.2]{dai_xu}, for example), while the radial part is defined as
\begin{equation*}
R_{nl}(r) \,\coloneqq\, L_{n-l-1}^{(l+1/2)}(r^2)\, r^l,
\end{equation*}
$L_{n-l-1}^{(l+1/2)}$ being a generalized Laguerre polynomial (see, \textit{e.\,g.}, \citep[Sect.\,6.2]{andrews_askey_roy}).
\end{definition}

\begin{theorem}[{\citep[Cor.\,1.3]{prestin_wuelker}}]\label{thm:completeness}
The SGL basis functions constitute a complete orthonormal polynomial set (a polynomial orthonormal basis) in the Hilbert space $H$.
\end{theorem}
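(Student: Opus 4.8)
The plan is to unpack the three assertions the statement bundles together and verify them in turn: each $H_{nlm}$ is a polynomial, the family is orthonormal in $H$, and its linear span is dense in $H$. The first is immediate from the definitions: $L_{n-l-1}^{(l+1/2)}(r^2)$ is a polynomial in $r^2 = \|x\|_2^2$, hence a polynomial of degree $2(n-l-1)$ in the Cartesian coordinates, while $r^l Y_{lm}$ is, up to a constant, the solid harmonic of degree $l$, a homogeneous polynomial of degree $l$. Their product $H_{nlm}$ is therefore a polynomial of degree $2n-l-2$, and the real work lies in orthonormality and, above all, completeness.

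For orthonormality I would pass to spherical coordinates, where $\mathrm{d}x = r^2\sin\vartheta\,\mathrm{d}r\,\mathrm{d}\vartheta\,\mathrm{d}\varphi$ and the weight is $\mathrm{e}^{-r^2}$, so that the inner product factorizes:
\begin{equation*}
\langle H_{nlm}, H_{n'l'm'}\rangle_H = N_{nl}N_{n'l'}\left(\int_0^\infty R_{nl}(r)R_{n'l'}(r)\,\mathrm{e}^{-r^2}r^2\,\mathrm{d}r\right)\left(\int_{S^2} Y_{lm}\,\overline{Y_{l'm'}}\,\mathrm{d}\sigma\right).
\end{equation*}
The angular factor equals $\delta_{ll'}\delta_{mm'}$ by orthonormality of the spherical harmonics, collapsing the problem to the radial integral with $l=l'$. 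There I would substitute $s=r^2$ to obtain $\tfrac12\int_0^\infty L_{n-l-1}^{(l+1/2)}(s)L_{n'-l-1}^{(l+1/2)}(s)\,s^{l+1/2}\mathrm{e}^{-s}\,\mathrm{d}s$ and apply the standard orthogonality relation $\int_0^\infty L_j^{(\alpha)}L_k^{(\alpha)}s^\alpha\mathrm{e}^{-s}\,\mathrm{d}s = \tfrac{\Gamma(k+\alpha+1)}{k!}\,\delta_{jk}$ with $\alpha=l+1/2$. A short check confirms that the resulting factor $\tfrac12\,\Gamma(n+1/2)/(n-l-1)!$ is exactly cancelled by $N_{nl}^2$, leaving $\delta_{nn'}$.

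The substance is completeness, which I would reduce to the density of polynomials in $H$ in two stages. First, I claim the span of the whole SGL family is the space $\mathcal{P}$ of all polynomials on $\mathbb{R}^3$. Fixing $l$ and $m$ and letting $n$ range over $\{l+1,l+2,\dots\}$, the radial factors $L_{n-l-1}^{(l+1/2)}(r^2)$ run through Laguerre polynomials of every degree in the variable $r^2$; since these form a graded basis of the univariate polynomials, $\operatorname{span}\{R_{nl}(r)Y_{lm} : n>l\}$ coincides with $\operatorname{span}\{r^{2k}\cdot r^l Y_{lm} : k\geq 0\}$. Taking the union over all $l,m$ and invoking the decomposition $\mathcal{P} = \bigoplus_{l\ge 0}\bigoplus_{k\ge 0}\|x\|_2^{2k}\,\mathcal{H}_l$, with $\mathcal{H}_l$ the space of harmonic homogeneous polynomials of degree $l$ spanned by the solid harmonics, yields $\operatorname{span}\{H_{nlm}\} = \mathcal{P}$. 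Second, polynomials are dense in $H$: because the Gaussian weight factorizes as $\mathrm{e}^{-x^2}\mathrm{e}^{-y^2}\mathrm{e}^{-z^2}$ and the univariate Gaussian moment problem is determinate (equivalently, the Hermite polynomials are complete in $L^2(\mathbb{R},\mathrm{e}^{-t^2}\mathrm{d}t)$), a tensor-product argument gives $\overline{\mathcal{P}} = H$. Combining the two stages shows the closed span of the SGL functions is all of $H$.

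I expect the main obstacle to be this completeness step rather than the orthonormality computation. One must correctly marry the spherical-harmonic decomposition of $\mathcal{P}$ with the Laguerre spanning property to be sure nothing is missed, and then appeal to a genuine functional-analytic density theorem for the Gaussian-weighted space, as opposed to the mere orthogonality established earlier, which never by itself certifies a basis.
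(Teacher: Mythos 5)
Your argument is correct: the orthonormality computation (separation of variables, orthonormality of the $Y_{lm}$, the substitution $s=r^2$ reducing the radial integral to the Laguerre orthogonality relation with $\alpha=l+1/2$, and the cancellation against $N_{nl}^2$) and the completeness argument (span of the $H_{nlm}$ equals all polynomials via the decomposition $\mathcal{P}=\bigoplus_{l,k}\|x\|_2^{2k}\mathcal{H}_l$, followed by density of polynomials in $H$ via the tensor-product/Hermite argument for the Gaussian weight) are both sound. Note that the paper itself gives no proof of this statement, importing it verbatim from \citep[Cor.\,1.3]{prestin_wuelker}; your proof is the standard route to that result, so there is nothing in the present text to contrast it with.
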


Theorem \ref{thm:completeness} implies that a function $f \in H$ can be approximated arbitrarily well w.\,r.\,t.\ $\|\cdot\|_H$ by finite linear combinations of the SGL basis functions. Such linear combinations are referred to as \emph{bandlimited} functions. In particular, a function $f \in H$ is called bandlimited with bandwidth $B$ if the \emph{SGL Fourier coefficients} $\hat{f}_{nlm} \coloneqq \langle f, H_{nlm} \rangle_H$ vanish for $n > B$. We have recently described fast and reliable SGL Fourier transforms, \textit{i.\,e.}, generalized FFTs for the SGL basis functions \citep{prestin_wuelker}. These fast algorithms compute the $B(B+1)(2B+1)/6$ potentially non-zero SGL Fourier coefficients $\hat{f}_{nlm}$ of a function $f$ with bandwidth $B$ in $\mathcal{O}(B^4)$ or even only $\mathcal{O}(B^3 \log^2 B)$ computation steps from $(2B)^3$ sampled function values, instead of the naive $\mathcal{O}(B^6)$ computation steps (as usual, we define a single \emph{computation step} as a complex multiplication and subsequent addition). In addition to our fast SGL Fourier transforms, another advantage of using the SGL basis functions is that their spectral behavior under rotations and translations in $\mathbb{R}^3$ is completely known \citep{prestin_wuelker_2}. This allows to efficiently solve certain three-dimensional matching problems (cf.\ \citep[Problem 2.3]{prestin_wuelker_2}). However, in our previously described fast SGL Fourier transforms, any respective function of interest is to be sampled at certain grid points in $\mathbb{R}^3$.

There are many conceivable applications in which the sample values $f(x_i)$ of a bandlimited function $f \in H$ are not given on the grid points in \citep[Thm.\,3.4]{prestin_wuelker}. It could be, for example, that the points $x_i$ constitute a Cartesian grid in $\mathbb{R}^3$, or that these points are \emph{scattered} in another way. In such cases, computation of the SGL Fourier coefficients $\hat{f}_{nlm}$ of $f$ using our previously described fast SGL Fourier transforms after interpolation of the given function values $f(x_i)$ is generally not advisable. Therefore, in this paper, we develop \emph{non-lattice fast SGL Fourier transforms} (NFSGLFTs).

In the case of the classical FFT, the now standard non-equispaced equivalent (NFFT) was introduced by \citet{potts_steidl_tasche}, subsequent to works including \citep{dutt_rokhlin, dutt_rokhlin_2, beylkin, elbel_steidl} (cf.\ \citep[Appx.\,D]{keiner_kunis_potts}). Nowadays the NFFT is firmly established in practice and continues to prove itself useful in many applications.
A direct employment of the NFFT also yields ``non-equispaced" FFTs on the two-dimensional unit sphere $\mathbb{S}^2$ \citep{kunis_potts} and on the three-dimensional rotation group $\textnormal{SO}(3)$ \citep{potts_prestin_vollrath}. In both cases, the idea is to use well-known basal fast algorithms to determine a multivariate trigonometric polynomial that coincides with the bandlimited function of interest where the latter is to be evaluated. This trigonometric polynomial can then be evaluated efficiently using the NFFT. In this paper, we pursue the same strategy in the SGL case, resulting in a class of NFSGLFTs analogous to the generalized NFFTs on $\mathbb{S}^2$ and $\textnormal{SO}(3)$ mentioned above. In addition to the three-dimensional NFFT of \citeauthor{potts_steidl_tasche}, as basal fast algorithms, we use a fast discrete Legendre transform (FLT), the Clenshaw-Smith algorithm or, alternatively, a fast discrete polynomial transform (FDPT), as well as the well-known fast discrete cosine transform (DCT).

Analogously as in the derivation of all the above-mentioned (generalized) NFFTs, we begin the derivation of our NFSGLFTs with the discrete transform that reconstructs function values $f(x_i)$ of a bandlimited function $f \in H$ with bandwidth $B$ at $M$ scattered points $x_i \in \mathbb{R}^3$ from given SGL Fourier coefficients $\hat{f}_{nlm}$, $|m| \leq l < n \leq B$. To state this transform, we linearize the index range of the SGL basis functions: We identify the triple $[n, l, m]$, $|m| \leq l < n \leq B$, with $\mu \in \lbrace 0, \dots, B(B+1)(2B+1)/6 - 1\rbrace$ via the one-to-one correspondence
\begin{equation*}
\mu \,=\, \frac{n(n-1)(2n-1)}{6} + l(l+1) + m.
\end{equation*}
This allows for understanding the indices $n$, $l$, and $m$ as functions of the linear index $\mu$,
\begin{empheq}[left=\empheqlbrace]{alignat=2}\label{eq:n_mu_l_mu_m_mu}
\,\,&n(\mu) \,&&=\, \left\lfloor \frac{1}{\sqrt{3}} \cosh\left(\frac{1}{3} \arcosh \left(36 \sqrt{3} \mu\right) \right) + \frac{1}{2} \right\rfloor, \vphantom{\left\lfloor \sqrt{\frac{()}{6}} \right\rfloor} \notag\\
\,\,&l(\mu) \,&&=\, \bigg\lfloor \sqrt{\mu - \frac{n(\mu) (n(\mu)-1) (2n(\mu)-1)}{6}} \bigg\rfloor, \\
\,\,&m(\mu) \,&&=\, \mu - \frac{n(\mu) (n(\mu)-1) (2n(\mu)-1)}{6} - l(\mu)(l(\mu) + 1). \vphantom{\left\lfloor \sqrt{\frac{()}{6}} \right\rfloor}\notag
\end{empheq}

\begin{definition}[NDSGLFT]\label{def:ndsglft}
Let scattered points $x_{i} \in \mathbb{R}^3$, $i = 0,\dots,M-1$, and a bandwidth $B$ be given. We set
\begin{equation*}
\Lambda 
\,=\, \Lambda(B; x_0, \dots, x_{M-1})
\,\coloneqq\,
\begin{bmatrix}
H_{n(\mu),l(\mu),m(\mu)}(x_i)
\end{bmatrix}_{\subalign{i &= 0, \dots, M - 1 \\ \mu &= 0, \dots, B(B+1)(2B+1)/6-1}} \in\, \mathbb{C}^{M \times B(B+1)(2B+1)/6}.
\end{equation*}
The corresponding linear mapping $\Lambda : \mathbb{C}^{B(B+1)(2B+1)/6} \to \mathbb{C}^M$ is called the \emph{non-lattice discrete SGL Fourier transform} (NDSGLFT).
\end{definition}

Let now a Fourier vector $\hat{f} \coloneqq [\hat{f}_{n(\mu),l(\mu),m(\mu)}]_{\mu = 0, \dots, B(B+1)(2B+1)/6-1}$ of a bandlimited function $f \in H$ with bandwidth $B$, as well as scattered points $x_0, \dots, x_{M - 1} \in \mathbb{R}^3$ be given. Without risk of confusion, we define the vector $f \coloneqq [f(x_i)]_{i = 0, \dots, M - 1}$ containing the corresponding function values of the function $f$. These function values $f$ can be computed from the Fourier vector $\hat{f}$ by multiplication of the latter by the transformation matrix $\Lambda$, \textit{i.\,e.},
\begin{equation}\label{eq:ndsglft}
f \,=\, \Lambda \hat{f}.
\end{equation}
A direct multiplication by the matrix $\Lambda$, however, apparently requires $\mathcal{O}(MB^{3})$ computation steps. In practice, this is prohibitively expensive. The NFSGFTs presented in this paper, in contrast, have an asymptotic complexity of $\mathcal{O}(\Phi(B) + \Psi(B)M)$, where $\Phi$ and $\Psi$ are functions of the bandwidth $B$,
\begin{empheq}[]{alignat=2}
\,\,&\Phi(B) \,&&=\, B^4 + (\sigma(B) B)^3 \log (\sigma(B) B),\notag\\[-8pt]
& && \label{eq:Phi_Psi}\\[-8pt]
\,\,&\Psi(B) \,&&=\, q(B)^3,\notag
\end{empheq}
where $\sigma = \sigma(B)$ is the oversampling factor and $q = q(B)$ the cutoff parameter of the employed three-dimensional NFFT (Sect.\,\ref{sec:nfft}). When using an FDPT instead of the Clenshaw-Smith algorithm, $\Phi(B) = B^3 \log^2 B + (\sigma(B) B)^3 \log (\sigma(B) B)$ can be achieved. Note that the above functions $\Phi$ and $\Psi$ are representatives of larger function classes.

The NFSGLFTs presented in this paper are characterized by a respective factorization of the matrix $\Lambda$ in \eqref{eq:ndsglft}. This means that we also have a respective fast \emph{adjoint} transform, \textit{i.\,e.}, a fast algorithm for multiplication with the Hermitean-transposed matrix $\Lambda^\textnormal{H}$, with the same complexity. Analogously as in the case of the NFFT, we can thus realize a fast \emph{inverse} transform (iNFSGLFT), \textit{i.\,e.}, a fast algorithm for computing the SGL Fourier coefficients $\hat{f}_{nlm}$, $|m| \leq l < n \leq B$, of a bandlimited function $f \in H$ with bandwidth $B$ from $M$ given scattered data $f(x_0), \dots, f(x_{M-1})$, as an iterative conjugate-gradient (CG) method.

It is important to note that as is the case in the classical NFFT and its above-mentioned generalizations, the NFSGLFTs presented in this paper are \emph{approximating} algorithms, the results of which are approximative even in exact arithmetics. Hence, in our investigations, the relation between the approximation error and the functions $\Phi$ and $\Psi$ plays a crucial role. In particular, we will show that is possible to appropriately choose $\sigma = \textnormal{const.}$ and $q = o(B)$ in order to control the error when the bandwidth $B$ is increasing.

The remainder of this paper is organized as follows: Sections \ref{sec:nfft}, \ref{sec:dct}, \ref{sec:flt}, and \ref{sec:clenshaw} deal with the required NFFT, DCT, FLT, and the Clenshaw-Smith algorithm\,/\,FDPT, respectively. The derivation of the NFSGLFTs is contained in Section \ref{sec:nfsglft}. In Section \ref{sec:error}, we prove an error estimate. Finally, in Section \ref{sec:numerical_experiments}, we report and discuss our extensive numerical results, demonstrating the practical applicability of our fast algorithms. Table \ref{tab:complexity} provides an overview of the arithmetic and storage complexity of the fast algorithms considered in this work.

\begin{table}\small
\caption{Arithmetic and storage complexity of the fast algorithms considered in this paper. We here consider the well-known NFFT of \citep{potts_steidl_tasche}, and state the storage complexity for the FLT variant of \citep{healy_et_al} and the FDPT variant of \citep{driscoll_healy_rockmore}, respectively. For simplicity, in the Clenshaw-Smith algorithm and the FDPT we assume that the number of target points equals the bandwidth.}\label{tab:complexity}
\begin{tabular}{ l | l | c | c }
 & \textbf{Parameters} & \makecell{\textbf{Arithmetic}\\\textbf{complexity}} & \makecell{\textbf{Storage}\\\textbf{complexity}} \\\hline
\makecell[l]{Clenshaw-Smith \\ algorithm} & \makecell[l]{$n$: bandwidth, \\ \hphantom{$n$: }number of target points} & $\mathcal{O}(n^2)$ & $\mathcal{O}(n)$ \\\hline
DCT & \makecell[l]{$n$: bandwidth} & $\mathcal{O}(n \log n)$ & $\mathcal{O}(n)$ \\\hline
FDPT & \makecell[l]{$n$: bandwidth, \\ \hphantom{$n$: }number of target points} & $\mathcal{O}(n \log^2 n)$ & $\mathcal{O}(n \log n)$ \\\hline
FFT & \makecell[l]{$d$: dimension \\ $n$: bandwidth} & $\mathcal{O}(n^d \log n)$ & $\mathcal{O}(n^d)$ \\\hline
FLT & \makecell[l]{$n$: bandwidth} & \makecell[c]{$\mathcal{O}(n^2 \log^2 n)$\\(semi-naive variant: $\mathcal{O}(n^3)$)} & \makecell[c]{$\mathcal{O}(n^2 \log^2 n)$\\($\mathcal{O}(n^3)$)} \\\hline
FSGLFT & \makecell[l]{$B$: bandwidth} & $\mathcal{O}(B^3 \log^2 B)$ & $\mathcal{O}(B^3)$ \\\hline
NFFT & \makecell[l]{$d$: dimension \\ $n$: bandwidth \\ $m$: number of target points \\ $\sigma$: oversampling factor \\ $q$: cutoff parameter}  & $\mathcal{O}((\sigma n)^d \log(\sigma n) + q^d m)$ & $\mathcal{O}((\sigma n)^d)$ \\\hline
NFSGLFT & \makecell[l]{$B$: bandwidth \\ $M$: number of target points \\ $\sigma$: oversampling factor \\ $q$: cutoff parameter} & $\mathcal{O}(B^3 \log^2 B + (\sigma B)^3 \log (\sigma B) + q^3 M)$ & $\mathcal{O}((\sigma B)^3)$ \\
\end{tabular}
\end{table}

\begin{notation}
We write $f \lesssim g$ if there exists a constant $C > 0$ such that $f(x) \leq C g(x)$ for all $x$.
\end{notation} 

\section{Non-equispaced fast Fourier transform (NFFT)}
\label{sec:nfft}
In this section, we review the functional principle of the NFFT of \citeauthor{potts_steidl_tasche}, due to its fundamental importance within this work. We follow the outline of \citep[Sect.\,1.1]{potts}. As an important result, in Theorem \ref{thm:nfft_error}, we further derive an error estimate for the $d$-dimensional NFFT that we will later need.
Although the NFFT itself is a grid-free transform, certain grids occur in the functional background; they also describe the index range of the Fourier coefficients. We begin with the definition of these grids, as well as the $d$-dimensional torus.

\begin{definition}\label{def:nfft_grid}
Let $d, n \in \mathbb{N}$, $n$ even. We set
\begin{equation*}
I^d_n \,\coloneqq\, \lbrace z = [\zeta_0, \dots, \zeta_{d-1}] \in \mathbb{Z}^d \,:\, - n / 2 \leq \zeta_i < n / 2, ~ i = 0, \dots, d - 1  \rbrace.
\end{equation*}
\end{definition}

\begin{definition}
For $d \in \mathbb{N}$, the \emph{$d$-dimensional torus} is defined as the quotient group
\begin{equation*}
\mathbb{T}^d \,\coloneqq\, \mathbb{R}^d / 2 \pi \mathbb{Z}^d.
\end{equation*}
\end{definition}

\begin{remark}
For each equivalence class $[t] \in \mathbb{T}^d$, the representative $t$ can be chosen in $[0,2\pi)^d$. We thus simply identify the torus $\mathbb{T}^d$ with the $d$-dimensional hypercube $[0,2\pi)^d$.
\end{remark}

The derivation of the NFFT starts with the reconstruction of function values from given Fourier coefficients. Let such Fourier coefficients $\omega_k \in \mathbb{C}$, $k \in I^d_n$, of a $d$-dimensional trigonometric polynomial
\begin{equation}\label{eq:nfft_p}
p(t) \,\coloneqq\, \sum\nolimits_{k \in I^d_n} \omega_k \, \mathrm{e}^{\mathrm{i} \langle k, t \rangle_2}, \quad\quad t \in \mathbb{T}^d,
\end{equation}
of degree at most $n$ be given ($n$ even). The NFFT is a fast algorithm to evaluate the trigonometric polynomial $p$ at $m$ scattered points $t_0, \dots, t_{m-1} \in \mathbb{T}^d$. The difference with the classical FFT is that these points do not necessarily lie on a grid, and that their number $m$ is independent of $n$.

To bring the above problem into matrix-vector notation, the index range of the Fourier coefficients of $p$ is linearized (cf.\ \citep[p.\,11]{potts}): We identify $k = [\kappa_{0},\dots,\kappa_{d-1}] \in I^d_n$ with $\chi \in \lbrace 0, \dots, n^d - 1\rbrace$ via the bijective relation
\begin{equation*}
\chi \,=\, \left(\kappa_0 + \frac{n}{2}\right) + n \left(\kappa_1 + \frac{n}{2}\right) + \cdots + n^{d-1} \left(\kappa_{d-1} + \frac{n}{2}\right).
\end{equation*}
This allows for understanding $k$ as a function of $\chi$,
\begin{equation*}
k(\chi) \,=\, 
\begin{bmatrix}
\left\lfloor\frac{\chi - (\chi \textnormal{ mod } n^{j+1})}{n^j} \right\rfloor - \frac{n}{2}
\end{bmatrix}_{j = 0, \dots, d - 1}.
\end{equation*}
The computation of the polynomial values $p(t_0), \dots, p(t_{m-1})$ is equivalent to evaluating the matrix-vector product
\begin{equation}\label{eq:nfft_matrix}
\begin{bmatrix}
p(t_i) 
\end{bmatrix}_{i = 0, \dots, m - 1}
\,=\,
\underbrace{
\begin{bmatrix}
\mathrm{e}^{\mathrm{i}\langle k(\chi), \, t_i \rangle_2}
\end{bmatrix}_{\subalign{i &= 0, \dots, m - 1\\ \chi &= 0, \dots, n^d - 1}}
}_{\eqqcolon\, N}
\cdot
\begin{bmatrix}
\omega_{k(\chi)} 
\end{bmatrix}_{\chi = 0, \dots, n^d - 1}.
\end{equation}

\begin{definition}[NDFT]
The above linear mapping $N = N(d; n; t_0, \dots, t_{m-1}) : \mathbb{C}^{n^d} \to \mathbb{C}^m$ is called the \emph{$d$-dimensional non-equispaced discrete Fourier transform} (NDFT).
\end{definition}

A direct evaluation of the product \eqref{eq:nfft_matrix} requires $\mathcal{O}(mn^d)$ steps, in accordance with the size of the matrix $N$. The NFFT is an efficient approximating algorithm with a lower complexity. The idea is the following: Determine an approximant $s$ to the trigonometric polynomial $p$ that can be evaluated efficiently in the local (time) domain, so that $p$ and $s$ are as similar as possible in the frequency domain. The latter requirement is met as best as possible if
\begin{equation}\label{eq:NFFT_Fourier_comparison}
(2\pi)^{-d} \int_{\mathbb{T}^d} s(t) \, \mathrm{e}^{- \mathrm{i} \langle k, t \rangle_2} \, \mathrm{d}t \,=\, \omega_k, \quad\quad k \in \mathbb{Z}^d,
\end{equation}
holds, where $\omega_k \coloneqq 0$ for $k \in \mathbb{Z}^d \setminus I^d_n$.

Let now $\varphi \in L^{1}(\mathbb{R}^{d}) \cap L^{2}(\mathbb{R}^{d})$ be an ansatz function that can be evaluated in $\mathcal{O}(1)$ steps. We assume that the $2\pi$-periodized version
\begin{equation*}
\tilde{\varphi}(t) \,\coloneqq\, \sum\nolimits_{z \in \mathbb{Z}^d} \varphi(t - 2 \pi z), \quad\quad t \in \mathbb{T}^d,
\end{equation*}
has a uniformly converging Fourier series. The latter is given by
\begin{equation}\label{eq:tilde_varphi_Fourier}
\tilde{\varphi}(t) \,=\, \sum\nolimits_{k \in \mathbb{Z}^d} \tilde{\varphi}_{k} \, \mathrm{e}^{\mathrm{i} \langle k, t \rangle_2}, \quad\quad t \in \mathbb{T}^d,
\end{equation}
with the Fourier coefficients
\begin{equation*}
\tilde{\varphi}_{k} \,=\, (2\pi)^{- d} \int_{\mathbb{T}^d} \tilde{\varphi}(t) \, \mathrm{e}^{- \mathrm{i} \langle k, t \rangle_2} \, \mathrm{d}t, \quad\quad k \in \mathbb{Z}^d.
\end{equation*}
The equality in \eqref{eq:tilde_varphi_Fourier} is to be understood in the $L^{2}$ sense; we shall not mention this for the following Fourier series. The Fourier coefficients $\tilde{\varphi}_{k}$ are directly connected with the continuous Fourier transform of the ansatz function $\varphi$ via the well-known Poisson summation formula, \textit{i.\,e.},
\begin{equation}\label{eq:nfft_phi_tilde_Fourier}
\tilde{\varphi}_k \,=\, (2\pi)^{-d} \int_{\mathbb{R}^d} \varphi(x) \, \mathrm{e}^{- \mathrm{i} \langle k, x \rangle_2} \, \mathrm{d}x, \quad\quad k \in \mathbb{Z}^d.
\end{equation}

As a first approximant $\tilde{p}$ to the trigonometric polynomial $p$, a linear combination of translates of the $2\pi$-periodized ansatz function $\tilde{\varphi}$ is chosen,
\begin{equation}\label{eq:p_1}
\tilde{p}(t) \,\coloneqq\, \sum\nolimits_{l \in I^d_{\sigma n}} \alpha_l \, \tilde{\varphi} \left(t - \frac{2 \pi}{\sigma n} \, l \right), \quad\quad t \in \mathbb{T}^d,  
\end{equation}
where $\sigma \in \mathbb{N}$ is an \emph{oversampling factor}, and the scaling coefficients $\alpha_{l}$ of the translates are to be determined so that \eqref{eq:NFFT_Fourier_comparison} approximately holds.
Expanding $\tilde{p}$ into a Fourier series yields
\begin{align}
\tilde{p}(t) 
\,&=\, \sum\nolimits_{k \in \mathbb{Z}^d} \, \beta_k \, \tilde{\varphi}_k \, \mathrm{e}^{\mathrm{i} \langle k, t \rangle_2}\notag\\\label{eq:p_1_Fourier}
\,&=\, \sum\nolimits_{k \in I^d_{\sigma n}} \beta_k \, \tilde{\varphi}_k \, \mathrm{e}^{\mathrm{i} \langle k, t \rangle_2} \,+\,\sum\nolimits_{z \in \mathbb{Z}^d \setminus \lbrace 0 \rbrace} \sum\nolimits_{k \in I^d_{\sigma n}} \beta_k \, \tilde{\varphi}_{k + \sigma n z} \, \mathrm{e}^{\mathrm{i} \langle k + \sigma n z, t \rangle_2} 
\end{align}
with the $\sigma n$-periodic coefficients
\begin{equation}\label{eq:nfft_beta_k}
\beta_{k} \,=\, \sum\nolimits_{l \in I^d_{\sigma n}} \alpha_l \, \mathrm{e}^{- 2 \pi \mathrm{i} \langle k, l \rangle_2 / \sigma n}.
\end{equation}

Assuming that the absolute value of the Fourier coefficients $\tilde{\varphi}_{k + \sigma n z}$ with $k \in I^d_{\sigma n}$ and $z \in \mathbb{Z}^d \setminus \lbrace 0 \rbrace$ is negligibly small, the double sum on the right-hand side of \eqref{eq:p_1_Fourier} can be neglected. 
Under the additional assumption that the absolute value of the Fourier coefficients $\tilde{\varphi}_k$ does not vanish for $k \in I^d_n$, a comparison of the first sum on the right-hand side of \eqref{eq:p_1_Fourier} with the right-hand side of \eqref{eq:nfft_p} motivates the particular choice
\begin{equation*}
\beta_k \,\coloneqq\,
\begin{cases}
\omega_k / \tilde{\varphi}_k & \textnormal{if } k \in I^d_n,\\
0 & \textnormal{if } k \in I^d_{\sigma n} \setminus I^d_n,
\end{cases}
\end{equation*}
so that the equality in \eqref{eq:NFFT_Fourier_comparison} holds for all $k \in I^d_{\sigma n}$.

Now the scaling coefficients $\alpha_l$ in \eqref{eq:p_1} are determined from the coefficients $\beta_k$. To this end, \eqref{eq:nfft_beta_k} is brought into matrix-vector notation:
\begin{equation*}
\begin{bmatrix}
\beta_{k(\chi)} 
\end{bmatrix}_{\chi = 0, \dots, (\sigma n)^d - 1}
\,=\,
\underbrace{
\begin{bmatrix}
\mathrm{e}^{- 2 \pi \mathrm{i} \langle k(\chi),\, l(\nu) \rangle_2 / \sigma n}
\end{bmatrix}_{\chi, \nu = 0, \dots, (\sigma n)^d - 1}
}_{\eqqcolon\, F}
\cdot
\begin{bmatrix}
\alpha_{l(\nu)} 
\end{bmatrix}_{\nu = 0, \dots, (\sigma n)^d - 1}.
\end{equation*}
The matrix $F = F(d,n,\sigma)$ is a classical Fourier matrix in $d$ dimensions with special ordering of the indices. Its inverse is given by $(\sigma n)^{-d} F^{\textnormal{H}}$. In particular, this implies that by means of the $d$-dimensional iFFT, the scaling coefficients $\alpha_l$ can be computed in $\mathcal{O}((\sigma n)^{d} \log (\sigma n))$ steps from the coefficients $\beta_{k}$. To obtain a very fast algorithm, the oversampling factor $\sigma$ should thus be chosen such that $\sigma n$ is a power of two.

In a last step, the approximant $\tilde{p}$ is approximated by the final approximant $s$ that can be evaluated in local space more efficiently. Under the assumption that the ansatz function $\varphi$ decays fast in local space, it can be replaced by another function $\psi$, the support of which is contained in a hypercube $[-2\pi q/\sigma n, 2\pi q/\sigma n]^d$ ($q \in \mathbb{N}$, $q < \sigma n$). In particular, the choice
\begin{equation}\label{eq:nfft_psi}
\psi(x) \,\coloneqq\,
\begin{cases}
\varphi(x) & \textnormal{for } x \in [-2\pi q/\sigma n, 2\pi q/\sigma n]^d,\\
0 & \textnormal{otherwise},
\end{cases}
\end{equation}
is made, and again the $2\pi$-periodized version $\tilde{\psi} \coloneqq \sum\nolimits_{z \in \mathbb{Z}^d} \psi(\,\cdot-2 \pi z)$ is considered. The number $q$ is called the \emph{cutoff parameter}. As an approximant to $\tilde{p}$, and thus to $p$, the function
\begin{equation*}
s(t) \,\coloneqq\, \sum\nolimits_{l \in I^d_{\sigma n}} \alpha_l \, \tilde{\psi} \left(t - \frac{2 \pi}{\sigma n} \, l\right), \quad\quad t \in \mathbb{T}^d,  
\end{equation*}
lends itself particularly well (cf.\ Eq.\,\ref{eq:p_1}). In accordance with \eqref{eq:nfft_psi}, for fixed $t = [\tau_0, \dots, \tau_{d-1}]$, the index range $I^d_{\sigma n}$ can be restricted to the range $\lbrace l = [\iota_0, \dots, \iota_{d-1}] \in I^d_{\sigma n} : \sigma n \tau_i / 2\pi - q \leq  \iota_i \leq \sigma n \tau_i / 2\pi + q, ~ i = 0, \dots, d - 1 \rbrace$, as can be checked geometrically easily. This allows for an evaluation of the approximant $s$ in $\mathcal{O}(q^d)$ instead of $\mathcal{O}((\sigma n)^d)$ steps. The overall arithmetic complexity of the above-described NFFT is, therefore, $\mathcal{O}((\sigma n)^d \log (\sigma n) + mq^d)$. The storage complexity amounts to $\mathcal{O}((\sigma n)^d)$.

The above-derived NFFT corresponds to an approximate factorization of the matrix $N$ in \eqref{eq:nfft_matrix}. Without going into detail, we here refer to \citep[Sect.\,1.2]{potts}. This means in particular that we also have an \emph{adjoint} NFFT, \textit{i.\,e.}, a fast algorithm for multiplication with the Hermitean-transposed (adjoint) matrix $N^\textnormal{H}$. The adjoint NFFT has the same arithmetic and storage complexity as the NFFT.

In this work, we will employ a Gaussian ansatz function $\varphi$ (cf.\ \citep[Sect.\,1.3.2]{potts}):

\begin{definition}\label{def:nfft_gauss}
Let $\sigma, n \in \mathbb{N}$, $n$ even, and $\lambda > 0$. We define the \emph{Gaussian ansatz function} for the NFFT as
\begin{equation*}
\varphi(x) \,\coloneqq\, \prod_{j = 0}^{d - 1} \phi(\xi_j), \quad\quad x = [\xi_0, \dots, \xi_{d - 1}] \in \mathbb{R}^d,
\end{equation*}
with the underlying univariate function
\begin{equation*}
\phi(\xi) \,\coloneqq\,
\frac{1}{\sqrt{2 \pi \lambda}} \left(\frac{\sigma n}{2 \pi}\right) \mathrm{e}^{- (\sigma n / 2 \pi)^2 \, \xi^2 / 2 \lambda}.
\end{equation*}
\end{definition}

We can state a closed-form expression for the continuous Fourier transform of the Gaussian ansatz function at points in $\mathbb{Z}^d$ by generalizing \citep[Eq.\,1.34]{potts} to the $d$-dimensional case. By \eqref{eq:nfft_phi_tilde_Fourier}, this facilitates the use of the Gaussian ansatz function in the NFFT.

\begin{lemma}\label{lemma:nfft_Gauss_Fourier}
Let\/ $\sigma$, $n$, and\/ $\lambda$ be as in Definition \ref{def:nfft_gauss}. For the continuous Fourier transform of the Gaussian ansatz function, we have that
\begin{equation*}
\int_{\mathbb{R}^d} \varphi(x) \, \mathrm{e}^{- \mathrm{i} \langle k, x \rangle_2} \, \mathrm{d}x \,=\, \prod_{j = 0}^{d - 1} \phi_{\kappa_j}, \quad\quad k =  [\kappa_0, \dots, \kappa_{d - 1}] \in \mathbb{Z}^d,
\end{equation*}
with\/ $\phi_\kappa \coloneqq \exp(- 2 \lambda (\pi \kappa / \sigma n)^2)$.
\end{lemma}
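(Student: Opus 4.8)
The plan is to exploit the separable (tensor-product) structure of the Gaussian ansatz function. Since Definition~\ref{def:nfft_gauss} gives $\varphi(x) = \prod_{j=0}^{d-1}\phi(\xi_j)$ and the complex exponential factorizes as $\mathrm{e}^{-\mathrm{i}\langle k, x\rangle_2} = \prod_{j=0}^{d-1}\mathrm{e}^{-\mathrm{i}\kappa_j \xi_j}$, each factor $\phi$ being an integrable Gaussian (so that $\varphi \in L^1(\mathbb{R}^d)$), Fubini's theorem lets me write the $d$-dimensional integral as a product of $d$ identical one-dimensional integrals,
\[
\int_{\mathbb{R}^d} \varphi(x)\,\mathrm{e}^{-\mathrm{i}\langle k, x\rangle_2}\,\mathrm{d}x \,=\, \prod_{j=0}^{d-1} \int_{\mathbb{R}} \phi(\xi)\,\mathrm{e}^{-\mathrm{i}\kappa_j \xi}\,\mathrm{d}\xi.
\]
It then suffices to evaluate the univariate Fourier integral of $\phi$.

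For the second step, I abbreviate $c \coloneqq \sigma n / 2\pi$, so that $\phi(\xi) = \frac{c}{\sqrt{2\pi\lambda}}\,\mathrm{e}^{-c^2\xi^2/2\lambda}$, and invoke the standard Gaussian Fourier transform $\int_{\mathbb{R}} \mathrm{e}^{-a\xi^2}\mathrm{e}^{-\mathrm{i}\kappa\xi}\,\mathrm{d}\xi = \sqrt{\pi/a}\;\mathrm{e}^{-\kappa^2/4a}$ with $a = c^2/2\lambda$. This one-dimensional formula can be taken as known, or established in passing by completing the square in the exponent and shifting the contour of integration onto the real axis.

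In the final step I substitute and track the constants. The prefactor $\sqrt{\pi/a} = \sqrt{2\pi\lambda}/c$ cancels exactly against the normalization $c/\sqrt{2\pi\lambda}$, leaving $\int_{\mathbb{R}}\phi(\xi)\,\mathrm{e}^{-\mathrm{i}\kappa\xi}\,\mathrm{d}\xi = \mathrm{e}^{-\lambda\kappa^2/2c^2}$. Reinserting $c = \sigma n/2\pi$ rewrites the exponent as $\lambda\kappa^2/2c^2 = 2\lambda(\pi\kappa/\sigma n)^2$, so the univariate integral equals $\phi_\kappa$; multiplying the $d$ factors yields the claimed $\prod_{j=0}^{d-1}\phi_{\kappa_j}$.

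This is a routine computation, so there is no genuine obstacle. The only point demanding mild care is the bookkeeping of the Gaussian normalization constants, ensuring that $\sqrt{\pi/a}$ cancels the prefactor cleanly and that the resulting exponent is massaged into the precise form $2\lambda(\pi\kappa/\sigma n)^2$ appearing in the definition of $\phi_\kappa$.
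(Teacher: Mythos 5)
Your proof is correct and follows essentially the same route as the paper, which states the lemma by appealing to the tensor-product structure of $\varphi$ and the known univariate Gaussian Fourier transform from \citep[Eq.\,1.34]{potts}; your explicit bookkeeping of the normalization constant (which differs from that in \citep{potts}) is exactly the point requiring care, and you handle it correctly.
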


In the next part of this section, we derive an error estimate for the NFFT with Gaussian ansatz function. Specifically, we give an upper bound for the \emph{maximum absolute error}
\begin{equation*}
E_\infty \,=\, E_\infty(d, n; \sigma, q; t_0, \dots, t_{m - 1}; w) \,\coloneqq\, \max_{i \in \lbrace 0, \dots, m - 1 \rbrace} |p(t_i) - s(t_i)|
\end{equation*}
with given Fourier coefficients $w \coloneqq [\omega_{k(\chi)}]_{\chi = 0, \dots, n^d - 1}$. For this, by making use of Lemma \ref{lemma:nfft_Gauss_Fourier}, we generalize \citep[Thm.\,1.7]{potts} to the $d$-dimensional case (note the different normalization of the Gaussian ansatz function in \citep{potts}). We need this error bound in Section \ref{sec:error} for $d = 3$.

\begin{theorem}\label{thm:nfft_error}
Choosing $\sigma \geq (\sqrt{d} + 1)/2$, the maximum absolute error of the NFFT with Gaussian ansatz function and the special choice\/ $\lambda \coloneqq \sigma q / (2\sigma - 1) \pi$ is bounded by
\begin{equation*}
E_\infty
\,\leq\, \|w\|_1 \, \bigg((2^d - 1) \left(2 + \frac{1}{\pi q}\right)^d + \frac{3^d - 1}{q^{d / 2}} \bigg(\sqrt{\frac{2 \sigma - 1}{2 \sigma}} + \frac{1}{2\pi} \sqrt{\frac{2 \sigma}{2 \sigma - 1}} \bigg)^d \bigg) \, \mathrm{e}^{- q \pi \left(1 - \frac{1}{2\sigma}\left(1 + \frac{d}{2 \sigma - 1}\right)\right)}.
\end{equation*}
In the special case\/ $d = 3$, we obtain for\/ $\sigma \geq 2$ due to monotonicity the error estimate
\begin{equation*}
E_\infty \,\lesssim\, \|w\|_1 \, \mathrm{e}^{- q \pi / 2}.
\end{equation*}
\end{theorem}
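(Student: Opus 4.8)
The plan is to adapt the one-dimensional argument of \citep[Thm.\,1.7]{potts} to arbitrary $d$, splitting the pointwise error at each evaluation node into an \emph{aliasing} contribution and a \emph{truncation} contribution,
\[
|p(t_i) - s(t_i)| \,\leq\, \underbrace{|p(t_i) - \tilde{p}(t_i)|}_{\text{aliasing}} \,+\, \underbrace{|\tilde{p}(t_i) - s(t_i)|}_{\text{truncation}},
\]
and bounding each by $\|w\|_1$ times an explicit factor. Throughout I would exploit that, by Lemma \ref{lemma:nfft_Gauss_Fourier} together with \eqref{eq:nfft_phi_tilde_Fourier}, the Fourier coefficients $\tilde{\varphi}_k = (2\pi)^{-d}\prod_j \phi_{\kappa_j}$ of the Gaussian ansatz factorize over the coordinate directions; this reduces every $d$-dimensional sum to a product of univariate sums and is the structural origin of the combinatorial prefactors $2^d - 1$ and $3^d - 1$.

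For the aliasing error I would start from the Fourier expansion \eqref{eq:p_1_Fourier}: with the chosen $\beta_k = \omega_k / \tilde{\varphi}_k$ the first sum there reproduces $p$ exactly (cf.\ \eqref{eq:nfft_p}), so the aliasing term is the remaining double sum over $z \neq 0$. Using $|\mathrm{e}^{\mathrm{i}\langle \cdot, \cdot\rangle_2}| = 1$ and $|\beta_k| = |\omega_k|/|\tilde{\varphi}_k|$ gives
\[
|p(t_i) - \tilde{p}(t_i)| \,\leq\, \sum_{k \in I^d_n} |\omega_k| \sum_{z \neq 0} \frac{|\tilde{\varphi}_{k + \sigma n z}|}{|\tilde{\varphi}_k|} \,\leq\, \|w\|_1 \, \max_{k \in I^d_n} \biggl[\prod_{j=0}^{d-1}\Bigl(1 + \sum_{\zeta \neq 0} \frac{\phi_{\kappa_j + \sigma n \zeta}}{\phi_{\kappa_j}}\Bigr) - 1\biggr].
\]
The univariate ratio evaluates to $\phi_{\kappa + \sigma n \zeta}/\phi_\kappa = \exp(-2\lambda\pi^2(\zeta^2 + 2\kappa\zeta/\sigma n))$, and since $|\kappa| \leq n/2$ forces $2|\kappa|/\sigma n \leq 1/\sigma$, it is bounded uniformly in $\kappa$ by $\exp(-2\lambda\pi^2 |\zeta|(|\zeta| - 1/\sigma))$. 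The series over $\zeta \neq 0$ is then a routine geometric-type estimate dominated by its $\zeta = \pm 1$ terms, which after reassembling the $d$-fold product yields the factor $(2^d - 1)(2 + 1/\pi q)^d$ times $\exp(-2\lambda\pi^2(1 - 1/\sigma))$.

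For the truncation error I would use that, by \eqref{eq:nfft_psi}, $\tilde{\psi}$ is supported in the cube of half-width $2\pi q/\sigma n$, so for fixed $t_i$ the expansions of $\tilde{p}$ and $s$ differ only through the Gaussian mass of $\varphi$ outside that cube. Bounding the coefficients via \eqref{eq:nfft_beta_k} by $|\alpha_l| \leq (\sigma n)^{-d}\sum_k |\omega_k|/|\tilde{\varphi}_k|$ again extracts a factor $\|w\|_1$ after inserting the uniform lower bound $\phi_\kappa \geq \phi_{n/2} = \exp(-\lambda\pi^2/2\sigma^2)$, which contributes the exponentially \emph{growing} factor $\exp(d\lambda\pi^2/2\sigma^2)$; the tail of the univariate Gaussian, estimated by complementary-error-function bounds, supplies the algebraic decay $q^{-d/2}$ with the $\sigma$-dependent constant $\bigl(\sqrt{(2\sigma-1)/2\sigma} + (2\pi)^{-1}\sqrt{2\sigma/(2\sigma-1)}\bigr)^d$, the factor $3^d - 1$ (each coordinate lying inside the window or beyond it on one of two sides), and the decaying factor $\exp(-q^2/2\lambda)$. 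I expect this truncation part to be the main obstacle: unlike the aliasing part, whose telescoping product structure is transparent, here one must simultaneously control the growing factor $1/\tilde{\varphi}_k$ (largest at $|\kappa| = n/2$) against the decaying Gaussian tail, handle the periodization overlaps of $\tilde{\psi}$, and keep all constants clean enough to merge with the aliasing bound.

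It then remains to insert $\lambda = \sigma q/(2\sigma - 1)\pi$. The point of this choice is that $q^2/2\lambda = q\pi(1 - 1/2\sigma)$ and $d\lambda\pi^2/2\sigma^2 = q\pi\,d/(2\sigma(2\sigma-1))$, so the decaying tail and the growing denominator factor in the truncation bound combine into exactly $\exp(-q\pi(1 - \frac{1}{2\sigma}(1 + \frac{d}{2\sigma-1})))$. A short computation shows that the aliasing exponent $2\lambda\pi^2(1 - 1/\sigma)$ exceeds this one by $q\pi(d-1)/(2\sigma(2\sigma-1)) \geq 0$, so the faster-decaying aliasing term can be subsumed under the same exponential. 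The constraint $\sigma \geq (\sqrt{d} + 1)/2$ is precisely what makes the resulting exponent non-negative, being equivalent to $(2\sigma-1)^2 \geq d$. Finally the $d = 3$ statement follows by putting $d = 3$ and $\sigma = 2$, whence the exponent is $q\pi/2$, and observing that for $\sigma \geq 2$ the decay rate only improves while the $\sigma$- and $q$-dependent prefactors stay bounded, so they are absorbed into the constant hidden by $\lesssim$.
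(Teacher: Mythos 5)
Your proposal is correct and follows essentially the same route as the paper's proof: the same aliasing/truncation split, the same coordinate-wise factorization of the Gaussian producing the $2^d-1$ and $3^d-1$ prefactors via the binomial theorem, the same univariate bounds imported from \citep[Thm.\,1.7]{potts}, and the same bookkeeping of exponents after inserting $\lambda = \sigma q/(2\sigma-1)\pi$ (including the observation that the aliasing exponent exceeds the truncation exponent by $q\pi(d-1)/(2\sigma(2\sigma-1)) \geq 0$ and that $\sigma \geq (\sqrt{d}+1)/2$ is equivalent to $(2\sigma-1)^2 \geq d$). No substantive differences to report.
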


\begin{proof}
We first get with the triangle inequality for arbitrary $i \in \lbrace 0,\dots,m-1 \rbrace$ that
\begin{equation}\label{eq:nfft_error_1}
|p(t_i) - s(t_i)| \,\leq\, \underbrace{|p(t_i) - \tilde{p}(t_i)|}_{\eqqcolon\, E_{\textnormal{a}}(t_{i})} \,+\, \underbrace{|\tilde{p}(t_i) - s(t_i)|}_{\eqqcolon\, E_{\textnormal{t}}(t_i)}.
\end{equation}
The \emph{aliasing error} $E_{\textnormal{a}}(t_{i})$, which results from the cutoff of $\tilde{p}$ in the frequency domain, can be estimated as \citep[Eq.\,1.23]{potts}
\begin{equation*}
E_\textnormal{a}(t_i) \,\leq\, \|w\|_{1} \, \max_{k \in I^d_n} \, \sum\nolimits_{z \in \mathbb{Z}^d \setminus \lbrace 0 \rbrace} \left|\frac{\tilde{\varphi}_{k + \sigma n z}}{\tilde{\varphi}_{k}}\right|.
\end{equation*}
On the right-hand side, we distinguish between summands with exactly one of the $d$ components of $z$ different from zero, with exactly two of the $d$ components of $z$ different from zero, etc. It follows with \eqref{eq:nfft_phi_tilde_Fourier} and Lemma \ref{lemma:nfft_Gauss_Fourier} that
\begin{align*}
E_\textnormal{a}(t_i) \,&\leq\, \|w\|_{1} \, \sum_{j = 1}^d {d \choose j} \bigg(\underbrace{\max_{\kappa \in I^1_n} \, \sum\nolimits_{\zeta \in \mathbb{Z} \setminus \lbrace 0 \rbrace} \left|\frac{\phi_{\kappa + \sigma n \zeta}}{\phi_{\kappa}}\right|}_{\eqqcolon\, L} \bigg)^j.
\end{align*}
The proof of \citep[Thm.\,1.7]{potts} reveals that
\begin{align*}
L \,=\, L(\sigma, n, q) \,&\leq\, \mathrm{e}^{-2 \pi^2 \lambda (1 - 1/\sigma)} \left(1 + \frac{\sigma}{2 \pi^2 \lambda (2 \sigma - 1)} + \mathrm{e}^{- 4 \lambda \pi^2 / \sigma} \left(1 + \frac{\sigma}{2 \pi^2 \lambda (2 \sigma + 1)} \right) \right)\\
\,&\leq\, \mathrm{e}^{- q \pi \left(1 - \frac{1}{2 \sigma}\left(1 + \frac{d}{2 \sigma - 1}\right)\right)} \left(1 + \frac{1}{2 \pi q} + \mathrm{e}^{- 4 \pi q / (2 \sigma - 1)} \left(1 + \frac{1}{2 \pi q} \frac{2 \sigma - 1}{2 \sigma + 1} \right) \right)\\
\,&\leq\, \mathrm{e}^{- q \pi \left(1 - \frac{1}{2 \sigma} \left(1 + \frac{d}{2 \sigma - 1} \right) \right)} \left(2 + \frac{1}{\pi q}\right),
\end{align*}
due to the special choice of $\lambda$. It follows with the binomial formula that
\begin{equation}\label{eq:nfft_error_a}
E_\textnormal{a}(t_i) \,\leq\, \|w\|_{1} \, (2^d - 1) \left(2 + \frac{1}{\pi q}\right)^d \mathrm{e}^{- q \pi \left(1 - \frac{1}{2 \sigma}\left(1 + \frac{d}{2 \sigma - 1} \right) \right)}.
\end{equation}
Here, we have used that $\sigma \geq (\sqrt{d} + 1)/2$ and therefore
\begin{equation*}
\mathrm{e}^{- q \pi \left(1 - \frac{1}{2 \sigma}\left(1 + \frac{d}{2 \sigma - 1} \right) \right)} \,\leq\,1.
\end{equation*}

The \emph{truncation error} $E_{\textnormal{t}}(t_i)$ that is due to the cutoff of $\varphi$ in local space can be estimated as
\begin{equation}\label{eq:nfft_E_t}
E_\textnormal{t}(t_i) \,\leq\, \|w\|_{1} \, (\sigma n)^{-d} \, \max_{k \in I^d_n} \, |\tilde{\varphi}_k|^{-1} \, \sum\nolimits_{l \in \mathbb{Z}^d \,:\, \left\|t_i + \frac{2 \pi}{\sigma n} l\right\|_\infty \,\geq\, \frac{2 \pi}{\sigma n} q} \left|\varphi \left(t_i + \frac{2 \pi}{\sigma n}l \right) \right|
\end{equation}
\citep[Eq.\,1.27]{potts}. According to \eqref{eq:nfft_phi_tilde_Fourier} and Lemma \ref{lemma:nfft_Gauss_Fourier}, we have that
\begin{equation*}
\max_{k \in I^d_n} \, |\tilde{\varphi}_k|^{-1} 
\,=\, (2 \pi)^d \left(\max_{\kappa \in I^1_n} \, |\phi_\kappa|^{-1} \right)^d 
\,=\, (2 \pi)^d \, \mathrm{e}^{\lambda (\pi / \sigma)^2 d / 2}.
\end{equation*}
Without loss of generality, let $t_i = [\tau_0, \dots, \tau_{d - 1}] \in [0, 2 \pi / \sigma n)^d$. On the right-hand side of \eqref{eq:nfft_E_t}, we distinguish between summands with multi-index $l = [\iota_j]_{j = 0, \dots, d - 1}$ for which the condition $|\tau_j + 2 \pi \iota_j / \sigma n| \geq 2 \pi q / \sigma n$ is fulfilled by exactly one $j$ of $d$, by exactly two $j$ of $d$, etc. It is
\begin{equation*}
\left|\phi \left(\tau_j + \frac{2 \pi}{\sigma n}\iota\right) \right|
\,\leq\,
\begin{cases}
\left|\phi\left(\frac{2 \pi}{\sigma n}\iota\right)\right| & \textnormal{if } \iota \geq 0,\\
\left|\phi\left(\frac{2 \pi}{\sigma n}(\iota+1)\right)\right| & \textnormal{if } \iota < 0.
\end{cases}
\end{equation*}
Similar as above, we get
\begin{align*}
E_\textnormal{t}(t_i) 
\,&\leq\, \|w\|_{1} \left(\frac{2\pi}{\sigma n}\right)^d \mathrm{e}^{\lambda (\pi / \sigma)^2 d / 2} \, \sum_{j = 1}^d {d \choose j} \left(\frac{1}{\sqrt{2 \pi \lambda}} \left(\frac{\sigma n}{2 \pi}\right)\right)^{d - j} \left(2 \sum\nolimits_{\iota \,\geq\, q} \left|\phi\left(\frac{2 \pi}{\sigma n} \iota \right)\right|\right)^j\\
\,&\leq\, \|w\|_{1} \, (2\pi\lambda)^{- d / 2} \, \mathrm{e}^{\lambda (\pi / \sigma)^2 d / 2} \, \sum_{j = 1}^d {d \choose j} \, 2^j \, \bigg(\underbrace{\sum\nolimits_{\iota \,\geq\, q} \mathrm{e}^{- \iota^2 / 2 \lambda}}_{\eqqcolon\, M} \bigg)^j.
\end{align*}
The proof of \citep[Thm.\,1.7]{potts} shows that
\begin{equation*}
M \,=\, M(\sigma, q) \,\leq\, \left(1 + \frac{\lambda}{q}\right) \mathrm{e}^{- q^2 / 2 \lambda}.
\end{equation*}
Due to the special choice of $\lambda$, and again with the binomial formula, we obtain
\begin{align}\label{eq:nfft_error_t}
E_\textnormal{t}(t_i) 
\,&\leq\, \|w\|_{1} \, (3^d - 1) \, (2 \pi \lambda)^{- d / 2} \left(1 + \frac{\lambda}{q}\right)^d \mathrm{e}^{\lambda (\pi / \sigma)^2 d / 2 - q^2 / 2 \lambda}\\\notag
\,&=\, \|w\|_{1} \, \frac{3^d - 1}{q^{d / 2}} \, \bigg(\sqrt{\frac{2 \sigma - 1}{2 \sigma}} + \frac{1}{2\pi} \sqrt{\frac{2 \sigma}{2 \sigma - 1}} \, \bigg)^d \mathrm{e}^{- q \pi \left(1 - \frac{1}{2\sigma}\left(1 + \frac{d}{2 \sigma - 1}\right)\right)}.
\end{align}
Inserting \eqref{eq:nfft_error_a} and \eqref{eq:nfft_error_t} into \eqref{eq:nfft_error_1} and taking the maximum over all $i \in \lbrace 0, \dots, m-1 \rbrace$ completes the proof.
\end{proof}


We note that the bound for the error $E_{\infty}$ in Theorem \ref{thm:nfft_error} does not directly depend upon $n$, but that $q$ must be chosen as $q < \sigma n$. By choosing the oversampling factor $\sigma$ large enough, we can see that the error decays not less than \emph{exponentially} w.\,r.\,t.\ the cutoff parameter $q$.

To close this section, we review how the \emph{inverse} transform (iNFFT), \textit{i.\,e.}, the fast algorithm to compute the Fourier coefficients $\omega_k$, $k \in I^d_n$, of a $d$-dimensional trigonometric polynomial $p$ of degree at most $n$ from $m$ given scattered data $p(t_0), \dots, p(t_{m-1})$, can be constructed from the NFFT and its adjoint. For this, there are different possibilities, of which we only consider one particular here. For a more extended discussion and potential further developments of the method discussed here, we refer to \citep[Chap.\,5]{kunis} (see also \citep[Sect.\,1.7]{potts}).

Let $N \in \mathbb{C}^{m \times n^d}$ and $f \coloneqq [p(t_i)]_{i = 0, \dots, m - 1} \in \mathbb{C}^m$ be as in \eqref{eq:nfft_matrix}. The aim is to find a Fourier vector $\tilde{f} \in \mathbb{C}^{n^d}$ that solves the linear system
\begin{equation}\label{eq:infft}
N \tilde{f} \,=\, f.
\end{equation}
Under the above assumptions, this system has at least one solution. To compute such, we state \eqref{eq:infft} as a least-squares problem: Determine $\tilde{f}$ so that
\begin{equation}\label{eq:infft_least_squares}
\|f - N \tilde{f}\|_2^2 \,\leq\, \|f - Ng\|_2^2 \quad\quad \forall g \in \mathbb{C}^{n^d}.
\end{equation}

We distinguish between three different cases. If the number of sample points is larger than the number of potentially non-zero Fourier coefficients ($n^d < m$), then the solutions $\tilde{f}$ of \eqref{eq:infft_least_squares} are obtained by solving the \emph{normal equations of first kind} of the over-determined system \eqref{eq:infft},
\begin{equation}\label{eq:nfft_normal_equations}
N^\textnormal{H} N \tilde{f} \,=\, N^{\textnormal{H}}  f.
\end{equation}
If we assume in addition that the columns of $N$ are linearly independent (\textit{i.\,e.}, $\textnormal{rank}(N) = n^d$), then the solution $\tilde{f}$ is unique. The \emph{conjugate-gradient normal-equation residual} (CGNR) method lends itself well to the numerical solution of \eqref{eq:nfft_normal_equations} (see \citep[Alg.\,10.4.1]{golub_van_loan}). The advantage of this method in this context is that it is based on multiplications by the matrices $N$ and $N^\textnormal{H}$, a task for which we have the NFFT and its adjoint as fast algorithms.

If, on the other hand, the number of points is smaller than the number of Fourier coefficients ($m < n^d$), then the least-squares problem \eqref{eq:infft_least_squares} is reformulated as an optimization problem: Determine $\tilde{f}$ so that
\begin{equation*}
\|f - N\tilde{f}\|_2^2 \,\leq\, \|f - Ng\|_2^2 \quad \forall g \in \mathbb{C}^{n^d}, \quad\quad \|\tilde{f}\|_2 = \min.
\end{equation*}
With this additional condition, the solution $\tilde{f}$ is uniquely determined independently of the rank of $N$. If we assume that the rows of $N$ are linearly independent (\textit{i.\,e.}, $\textnormal{rank}(N) = m$), then we obtain $\tilde{f}$ by solving the \emph{normal equations of second kind} of the under-determined system \eqref{eq:infft},
\begin{equation}\label{eq:nfft_normal_equations_2}
N N^\textnormal{H} g \,=\, f, \quad\quad \tilde{f} \,=\, N^\textnormal{H} g.
\end{equation}
The \emph{conjugate-gradient normal-equation error} (CGNE) method is well suited for the numerical solution of \eqref{eq:nfft_normal_equations_2} (see \citep[Alg.\,10.4.2]{golub_van_loan})). Here again we can use the NFFT and its adjoint for the required multiplications with $N$ and $N^\textnormal{H}$, respectively.

Finally, if the number of sample points is exactly the same as the number of Fourier coefficients ($m = n^d$), then we can also apply the CGNR method to the normal equations \eqref{eq:nfft_normal_equations}; with the NFFT and its adjoint, we obtain a fast algorithm here as well.

\section{Discrete cosine transform (DCT)}
\label{sec:dct}
One cannot really speak of \emph{the} discrete cosine transform, for there are multiple classes of underlying discrete transforms to be distinguished (cf.\ Rem.\,\ref{rem:dct}). Here, we consider a particular type and call this \emph{the} DCT.
\begin{definition}[DCT]\label{def:dct}
Let $n \in \mathbb{N}$. We set
\begin{align*}
D_n \,&\coloneqq\,
\textnormal{diag}
\begin{bmatrix}
1 / \sqrt{n}, \,\sqrt{2 / n}, \,\dots, \,\sqrt{2 / n}\,
\end{bmatrix} \,\in\, \mathbb{R}^{n \times n},\\
C_n \,&\coloneqq\,
\begin{bmatrix}
\cos (i \omega_j)
\end{bmatrix}_{i, j = 0, \dots, n - 1} \,\in\, \mathbb{R}^{n \times n},
\end{align*}
where $\omega_j \coloneqq (2j + 1) \pi / 2n$. The linear mapping $\tilde{C}_n \coloneqq D_n C_n : \mathbb{C}^n \to \mathbb{C}^n$ is called the \emph{discrete cosine transform}. 
\end{definition}

\begin{remark}\label{rem:dct}
In the literature, four different variants DCT I to IV are typically distinguished from each other (see, \textit{e.\,g.}, \citep[Sect.\,2]{plonka_tasche}). These are the four versions established in practice of eight theoretically possible \citep{strang}. The DCT in Definition \ref{def:dct} is closely related to the DCT II, which is often referred to as \emph{the} discrete cosine transform.
\end{remark}

For a given vector of length $n$, a direct multiplication by the matrix $\tilde{C}_n$ apparently requires $\mathcal{O}(n^2)$ steps. In the context of the discrete cosine transform, the fast algorithms for performing the DCT are commonly also abbreviated as DCT, instead of FCT for \emph{fast cosine transform}. Being closely related to the iFFT of length $2n$, these fast algorithms have an asymptotic complexity of $\mathcal{O}(n \log n)$ (see \citep{plonka_tasche} for examples of such fast DCTs). 

As a first important property of the DCT, we note without proof that it is an \emph{orthogonal} transform, which also answers the question regarding the inverse transform (iDCT):

\begin{lemma}\label{lem:idct}
It is\/ $\tilde{C}_n^{-1} = \tilde{C}_n^\textnormal{T}$.
\end{lemma}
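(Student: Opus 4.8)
The plan is to show directly that $\tilde{C}_n$ is orthogonal, i.e., that $\tilde{C}_n \tilde{C}_n^\textnormal{T} = I_n$; since $\tilde{C}_n$ is a real square matrix, this immediately yields $\tilde{C}_n^{-1} = \tilde{C}_n^\textnormal{T}$. Writing $d_i$ for the $i$-th diagonal entry of $D_n$ (so $d_0 = 1/\sqrt{n}$ and $d_i = \sqrt{2/n}$ for $i \geq 1$), the entries of $\tilde{C}_n = D_n C_n$ are $(\tilde{C}_n)_{ij} = d_i \cos(i\omega_j)$, and hence
\[
(\tilde{C}_n \tilde{C}_n^\textnormal{T})_{ik} \,=\, d_i\, d_k \sum_{j=0}^{n-1} \cos(i\omega_j)\cos(k\omega_j).
\]
Everything therefore reduces to evaluating the cosine sum $S_{ik} \coloneqq \sum_{j=0}^{n-1}\cos(i\omega_j)\cos(k\omega_j)$ with $\omega_j = (2j+1)\pi/2n$.

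First I would apply the product-to-sum identity $\cos\alpha\cos\beta = \tfrac12(\cos(\alpha+\beta) + \cos(\alpha-\beta))$ to write $S_{ik} = \tfrac12 T_{i+k} + \tfrac12 T_{i-k}$, where $T_m \coloneqq \sum_{j=0}^{n-1}\cos\bigl(m(2j+1)\pi/2n\bigr)$. The key computation is then the closed form of $T_m$, which I would obtain by passing to the complex exponential: writing $T_m = \operatorname{Re}\sum_{j=0}^{n-1}\mathrm{e}^{\mathrm{i}m(2j+1)\pi/2n}$ and factoring out $\mathrm{e}^{\mathrm{i}m\pi/2n}$ leaves a geometric series with ratio $\mathrm{e}^{\mathrm{i}m\pi/n}$. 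When $m = 0$ the sum is plainly $n$; otherwise, summing the geometric series and simplifying the resulting quotient by extracting a half-angle factor from $\mathrm{e}^{\mathrm{i}m\pi/n} - 1 = \mathrm{e}^{\mathrm{i}m\pi/2n}\cdot 2\mathrm{i}\sin(m\pi/2n)$ shows that the whole expression is purely imaginary, so its real part vanishes. Thus $T_m = n$ if $m = 0$ and $T_m = 0$ for every $m$ with $0 < |m| < 2n$.

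The remaining work is a short case analysis on $i,k \in \{0,\dots,n-1\}$, for which the crucial observation is the range control $0 \le i+k \le 2n-2$ and $|i-k| \le n-1$, so that both arguments stay strictly inside the window $|m| < 2n$ in which $T_m$ vanishes away from $m=0$. This gives $S_{00} = n$ (both terms contribute $T_0$), $S_{ii} = n/2$ for $i \ge 1$ (only the $T_{i-k}=T_0$ term survives), and $S_{ik} = 0$ for $i \ne k$. Multiplying by $d_i d_k$ then yields $(\tilde{C}_n\tilde{C}_n^\textnormal{T})_{00} = (1/n)\cdot n = 1$, $(\tilde{C}_n\tilde{C}_n^\textnormal{T})_{ii} = (2/n)\cdot(n/2) = 1$, and $0$ off the diagonal, which is exactly $\tilde{C}_n\tilde{C}_n^\textnormal{T} = I_n$. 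The only real pitfall I anticipate is the range bookkeeping in this last step: orthogonality hinges on no ``aliasing'' occurring, i.e.\ on $i\pm k$ never reaching a nonzero multiple of $2n$, and it is precisely the normalization built into $D_n$ (the special factor $1/\sqrt{n}$ in the zeroth entry versus $\sqrt{2/n}$ elsewhere) that compensates for the fact that $S_{00}$ is twice as large as the other diagonal entries.
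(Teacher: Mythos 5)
Your proof is correct. Note that the paper deliberately states this lemma without proof (``we note without proof that it is an orthogonal transform''), so there is no argument in the text to compare against; your computation is the standard verification of the orthogonality of the (suitably normalized) DCT-II matrix, and all the delicate points are handled properly: the closed form $T_m = n$ for $m=0$ and $T_m = 0$ for $0 < |m| < 2n$ is right, the range control $0 \le i+k \le 2n-2$, $|i-k| \le n-1$ rules out aliasing, and the distinguished entry $1/\sqrt{n}$ of $D_n$ exactly compensates $S_{00} = n$ versus $S_{ii} = n/2$. This is a complete, self-contained proof of the lemma.
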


For two vectors $x$ and $y$ of length $n$, Lemma \ref{lem:idct} implies that $\langle x,y \rangle_2 = \langle \tilde{C}_n x, \tilde{C}_n y \rangle_2$. The DCT is thus an \emph{isometric isomorphism} w.\,r.\,t.\ the standard Euclidean norm, \textit{i.\,e.}, $\|\tilde{C}_n x\|_2 = \|x\|_2$. In Section \ref{sec:error}, we need the following estimate for the $1$-norm:

\begin{lemma}\label{lem:dct_1_norm}
Let\/ $n \in \mathbb{N}$ and\/ $x \in \mathbb{C}^n$. Then\/ $\|\tilde{C}_n x\|_1 \leq \sqrt{n} \, \|x\|_1$.
\end{lemma}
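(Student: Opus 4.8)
The plan is to exploit the fact, established immediately after Lemma \ref{lem:idct}, that $\tilde{C}_n$ is an isometry with respect to the Euclidean norm, $\|\tilde{C}_n x\|_2 = \|x\|_2$, and to combine this with the two elementary relations between the $1$- and $2$-norms on $\mathbb{C}^n$. The guiding idea is that the $2$-norm is dominated by the $1$-norm from above, while the $1$-norm is dominated by $\sqrt{n}$ times the $2$-norm; inserting the isometry between these two passes converts a bound involving $\tilde{C}_n x$ into one involving $x$, with the factor $\sqrt{n}$ appearing exactly once.

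Concretely, I would first recall the two standard inequalities valid for every $y \in \mathbb{C}^n$: namely $\|y\|_2 \leq \|y\|_1$, which follows by squaring since $\sum_i |y_i|^2 \leq \big(\sum_i |y_i|\big)^2$, and $\|y\|_1 \leq \sqrt{n}\,\|y\|_2$, which is an instance of the Cauchy--Schwarz inequality applied to the vectors $(|y_i|)_{i}$ and $(1,\dots,1)$.

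Next I would apply the second inequality to $y = \tilde{C}_n x$ to get $\|\tilde{C}_n x\|_1 \leq \sqrt{n}\,\|\tilde{C}_n x\|_2$. Invoking the isometry $\|\tilde{C}_n x\|_2 = \|x\|_2$ (a consequence of Lemma \ref{lem:idct}) turns the right-hand side into $\sqrt{n}\,\|x\|_2$, and a final application of $\|x\|_2 \leq \|x\|_1$ yields the claimed estimate $\|\tilde{C}_n x\|_1 \leq \sqrt{n}\,\|x\|_1$.

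There is essentially no obstacle here: the statement is a routine consequence of norm equivalences together with the orthogonality of the transform. The only point warranting a moment's care is to chain the inequalities in the right direction---using the loose bound $\|\cdot\|_1 \leq \sqrt{n}\,\|\cdot\|_2$ on the image side and the tight bound $\|\cdot\|_2 \leq \|\cdot\|_1$ on the source side---so that the constant $\sqrt{n}$ is not inadvertently introduced twice.
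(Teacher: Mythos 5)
Your proposal is correct and follows exactly the paper's own argument: the chain $\|\tilde{C}_n x\|_1 \leq \sqrt{n}\,\|\tilde{C}_n x\|_2 = \sqrt{n}\,\|x\|_2 \leq \sqrt{n}\,\|x\|_1$, using Cauchy--Schwarz on the image side and the isometry from Lemma \ref{lem:idct}. No differences worth noting.
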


\begin{proof}
From the isometry property of the DCT w.\,r.\,t.\ the $2$-norm, it follows with the Cau\-chy\--Schwarz inequality that
\begin{equation*}
\|\tilde{C}_n x\|_1 \,\leq\, \sqrt{n} \, \|\tilde{C}_n x\|_2 \,=\, \sqrt{n} \, \|x\|_2 \,\leq\, \sqrt{n} \, \|x\|_1.
\end{equation*}
\end{proof}

Another property of the DCT is of particular importance to us, for it makes working with (trigonometric) polynomials very easy. To see this, we first introduce the Chebyshev polynomials (of first kind),
\begin{equation}\label{eq:chebyshev}
T_k : [-1,1] \to \mathbb{R}, \quad\quad T_k(\cos\omega) \coloneqq \cos(k\omega) \quad\quad (k \in \mathbb{N}_0).
\end{equation}
From the well-known cosine addition theorem, it follows that $T_k$ is a polynomial of degree $k$. Hence, for fixed $n \in \mathbb{N}_0$, the first $n + 1$ Chebyshev polynomials $T_0, \dots, T_n$ constitute a basis of the polynomial space $\Pi_n([-1,1])$. The next lemma shows how to expand polynomials efficiently w.\,r.\,t.\ the Chebyshev basis with the DCT.

\begin{lemma}[{cf.\ \citep[Sect.\,3]{kunis_potts}}]\label{lem:dct_chebyshev}
Let
\begin{equation*}
p \,=\, \sum_{k = 0}^{n - 1} \alpha_k \, T_k
\end{equation*}
be a polynomial on\/ $[-1,1]$ of degree at most\/ $n - 1$, and\/ $\omega_j = (2j + 1) \pi / 2n$ for\/ $j = 0, \dots, n - 1$. Then it is
\begin{equation*}
\begin{bmatrix}
\alpha_k
\end{bmatrix}_{k = 0, \dots, n - 1}
\,=\, D_n \tilde{C}_n  
\begin{bmatrix}
p(\cos\omega_j)
\end{bmatrix}_{j = 0, \dots, n - 1}.
\end{equation*}
\end{lemma}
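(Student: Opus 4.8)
The plan is to reduce the claimed identity to the orthogonality of the matrix $\tilde{C}_n$ recorded in Lemma \ref{lem:idct}, after first rewriting the sampling of $p$ as a single matrix multiplication. First I would use the defining property \eqref{eq:chebyshev} of the Chebyshev polynomials to evaluate $p$ at the nodes $\cos\omega_j$: since $T_k(\cos\omega_j) = \cos(k\omega_j)$, we get $p(\cos\omega_j) = \sum_{k=0}^{n-1}\alpha_k\cos(k\omega_j)$. Recognizing $\cos(k\omega_j)$ as the $(k,j)$ entry of $C_n$, this reads in matrix form as $[p(\cos\omega_j)]_{j} = C_n^{\textnormal{T}}\,[\alpha_k]_k$, so the sampling operator is exactly $C_n^{\textnormal{T}}$.

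Next I would substitute this into the right-hand side of the claimed formula. Using $\tilde{C}_n = D_n C_n$ and the fact that $D_n$ is diagonal (hence symmetric), I obtain $D_n\tilde{C}_n\,[p(\cos\omega_j)]_j = D_n^2\,C_n C_n^{\textnormal{T}}\,[\alpha_k]_k$. It therefore suffices to show $D_n^2 C_n C_n^{\textnormal{T}} = I$. By Lemma \ref{lem:idct} the transform $\tilde{C}_n$ is orthogonal, so $\tilde{C}_n\tilde{C}_n^{\textnormal{T}} = D_n\,C_n C_n^{\textnormal{T}}\,D_n = I$; since $D_n$ is invertible this gives $C_n C_n^{\textnormal{T}} = D_n^{-2}$, and hence $D_n^2 C_n C_n^{\textnormal{T}} = I$, which closes the argument.

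The only genuine content beyond bookkeeping is the orthogonality of $\tilde{C}_n$, which the paper supplies in Lemma \ref{lem:idct}. If one prefers a self-contained derivation, the hard part is proving $C_n C_n^{\textnormal{T}} = \textnormal{diag}[n, n/2, \dots, n/2]$ directly: here I would compute the entry $(C_n C_n^{\textnormal{T}})_{ik} = \sum_{j=0}^{n-1}\cos(i\omega_j)\cos(k\omega_j)$ via the product-to-sum identity and then evaluate $\sum_{j=0}^{n-1}\cos(m\omega_j)$ for $m = i\pm k$ as the real part of a geometric series in $\mathrm{e}^{\mathrm{i} m\pi/n}$. Because the nodes $\omega_j = (2j+1)\pi/2n$ are the Gauss--Chebyshev points and $|i\pm k| < 2n$ throughout, this sum equals $n$ when $m=0$ and vanishes otherwise, which yields the stated diagonal form. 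This discrete cosine orthogonality at the half-integer nodes is the one step requiring care; everything else is a matrix reshuffling.
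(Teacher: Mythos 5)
Your proof is correct. The paper itself gives no proof of this lemma --- it is stated with a bare citation to Kunis--Potts --- so there is nothing to compare against; your reduction of the identity to $D_n^2 C_n C_n^{\textnormal{T}} = I$ via the orthogonality in Lemma \ref{lem:idct} is exactly the right bookkeeping, and your sketch of the self-contained verification (the sum $\sum_{j=0}^{n-1}\cos(m\omega_j)$ vanishing for $0<|m|<2n$ at the half-integer nodes, giving $C_nC_n^{\textnormal{T}} = \textnormal{diag}[n,n/2,\dots,n/2] = D_n^{-2}$) is also sound and is the standard discrete orthogonality argument underlying the cited reference.
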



\section{Fast Legendre transform (FLT)}
\label{sec:flt}
The associated Legendre polynomial of degree $l \in \mathbb{N}_0$ and order $m \in \lbrace -l,\dots,l \rbrace$ is defined as
\begin{equation*}
P_{lm} : [-1,1] \to \mathbb{R}, \quad\quad P_{lm}(\xi) \,\coloneqq\, \frac{(-1)^m}{2^l l!} \, (1 - \xi^2)^{m/2} \, \frac{\mathrm{d}^{l+m}}{\mathrm{d}\xi^{l+m}} (\xi^2 - 1)^l.
\end{equation*}
Note that the associated Legendre polynomials are, in fact, only polynomials for even order $m$. They are sometimes also referred to as associated Legendre \emph{functions}. The associated Legendre polynomial $P_{lm}$ constitutes the polar part of the spherical harmonic $Y_{lm}$. 
%

For $n \in \mathbb{N}$, we set $\vartheta_j \coloneqq (2j+1) \pi / 4n$, $j = 0, \dots, 2n - 1$, and define the Legendre matrices
\begin{equation}\label{eq:legendre_matrix}
L_m \,=\, L_{m,n} 
\,\coloneqq\,
\begin{bmatrix}
P_{lm}(\cos \vartheta_j)
\end{bmatrix}_{\subalign{l &= |m|, \dots, n-1\\ j &= 0, \dots, 2n-1}} \,\in\, \mathbb{R}^{(n - |m|) \times 2n}, \quad\quad |m| < n.
\end{equation}

\begin{definition}[DLT]\label{def:flt}
Let $n \in \mathbb{N}$. The sequence of linear mappings $L_m : \mathbb{C}^{2n} \to \mathbb{C}^{n-|m|}$, $m = 1 - n, \dots, n-1$, is called \emph{discrete Legendre transform} (DLT).
\end{definition}

In addition to $n$, let data $x_m \in \mathbb{C}^{2n}$, $|m| < n$, be given. When precomputing each required matrix $L_m$, $\mathcal{O}(n^2)$ steps are necessary for directly computing each matrix-vector product $L_m x_m$. This results in a naive algorithm for performing the DLT with an arithmetic and storage complexity of $\mathcal{O}(n^3)$. In recent years, many FLTs with a lower complexity have been proposed (see, for example, \citep{driscoll_healy, healy_et_al, kunis_potts}). This is due to the fact that the FLT constitutes an integral part of the fast spherical Fourier transform. Pursuing the well-known \emph{divide-and-conquer} strategy (see, \textit{e.g.}, \citep[Sect.\,2.3.1]{cormen_et_al}), \citeauthor{healy_et_al}\ develop FLTs with an arithmetic and storage complexity of $\mathcal{O}(n^2 \log^2 n)$; see \citep[Thm.\,3]{healy_et_al} and note that the \textit{precomputed data structure} is required solely for the FLT. \citet[Sect.\,4]{kunis_potts} offer FLTs with a complexity of $\mathcal{O}(n^2 \log^2 n)$ as well; these authors even also carry out a stabilization for large problem sizes. These elaborate FLTs, however, are more of theoretical interest to us: for the three-dimensional problems considered in this work, the DCT-based \emph{semi-naive} FLT and its adjoint of \citeauthor{healy_et_al}\ are suitable choices (cf.\ \citep[Sect.\,6]{healy_et_al}). The semi-naive FLT and its adjoint\,--\,which correspond to a factorization of the matrices $L_m$ and $L_m^\textnormal{T}$, respectively, see \citep[Thm.\,2.2.14 \& Cor.\,2.2.15]{wuelker}\,--\,have an asymptotic and storage complexity of $\mathcal{O}(n^3)$, \textit{i.e.}, they are no ``truly fast'' algorithms. The number of required computation steps, however, is significantly reduced in this variant. As an alternative to the semi-naive FLTs, one could also use the Clenshaw-Smith algorithm (Sect.\,\ref{sec:clenshaw}) or an FDPT and its respective adjoint to obtain a fast FLT and adjoint. This is due to the fact that the associated Legendre polynomials satisfy the three-term recurrence relation
\begin{equation}\label{eq:Legendre_recurrence}
(l + 1 - m) \, P_{l+1,m}(\xi) \,=\, (2l + 1) \, \xi \, P_{lm}(\xi) \,-\, (l + m) \, P_{l-1,m}(\xi), \quad\quad |m| \leq l \in \mathbb{N};
\end{equation}
cf.\ Remark \ref{rem:flt_clenshaw}.

\section{Clenshaw-Smith algorithm\,/\,fast discrete polynomial transform (FDPT)}
\label{sec:clenshaw}
Consider a function system $\lbrace f_k : k \in \mathbb{N}_0 \rbrace$ satisfying a three-term recurrence relation
\begin{equation}\label{eq:clenshaw_recursion}
f_{k + 1}(\xi) \,=\, \alpha_k(\xi) \, f_k(\xi) \,+\, \beta_k(\xi) \, f_{k-1}(\xi), \quad\quad k \in \mathbb{N}.
\end{equation}
Here we assume that the coefficients functions $\alpha_k$ and $\beta_k$ can be evaluated in $\mathcal{O}(1)$ steps. We are looking for an efficient method to evaluate the sums
\begin{equation}\label{eq:clenshaw_sum}
S_j \,\coloneqq\, \sum_{k = 0}^{n - 1} \gamma_k \, f_k(\xi_j), \quad\quad j = 0, \dots, m - 1,
\end{equation}
with given data $[\gamma_0, \dots, \gamma_{n-1}]$ at given points $[\xi_0, \dots, \xi_{m-1}]$ ($m,n \in \mathbb{N}$). This problem can be stated in matrix-vector notation as
\begin{equation}\label{eq:clenshaw_matrix}
\begin{bmatrix}
S_0\\
\vdots\\
S_{m-1} 
\end{bmatrix}
\,=\,
\underbrace{
\begin{bmatrix}
f_0(\xi_0) & \cdots & f_{n-1}(\xi_0)\\
\vdots & & \vdots\\
f_0(\xi_{m-1}) & \cdots & f_{n-1}(\xi_{m-1})\\
\end{bmatrix}
}_{\eqqcolon\, A}
\cdot
\begin{bmatrix}
\gamma_0\\
\vdots\\
\gamma_{n-1}
\end{bmatrix}.
\end{equation}

If one precomputes the matrix $A = A(\xi_0, \dots, \xi_{m-1}) \in \mathbb{C}^{m \times n}$ using the three-term recurrence relation \eqref{eq:clenshaw_recursion}, then an evaluation of \eqref{eq:clenshaw_matrix} with an arithmetic and storage complexity of $\mathcal{O}(mn)$ is possible. The Clenshaw-Smith algorithm \citep{clenshaw,smith}, which was first introduced for the Chebyshev polynomials \eqref{eq:chebyshev}, achieves this with a lower storage complexity of only $\mathcal{O}(m)$. The Clenshaw-Smith algorithm corresponds to a factorization of the matrix $A$ (cf.\ \citep[Thm.\,2.2.21]{wuelker}). Hence, one also has an adjoint Clenshaw-Smith algorithm, which allows for given points $[\xi_0, \dots, \xi_{m-1}]$ and corresponding data $[\gamma_0, \dots, \gamma_{m-1}]$ a computation of the sums
\begin{equation*}
\sum_{j = 0}^{m - 1} \gamma_j \, f_k(\xi_j), \quad\quad k = 0, \dots, n - 1,
\end{equation*}
with the same arithmetic and storage complexity as that of the Clenshaw-Smith algorithm ($m,n \in \mathbb{N}$; see also \citep[Cor.\,2.2.23]{wuelker}).

As indicated above, an alternative to the adjoint Clenshaw-Smith algorithm is the FDPT of \citet{driscoll_healy_rockmore}. In the case $m = n$, this class of fast algorithms has an arithmetic complexity of only $\mathcal{O}(n \log^2 n)$. The FDPT of \citeauthor{driscoll_healy_rockmore}\ corresponds to a factorization of the Hermitean transpose of the matrix $A$ in \eqref{eq:clenshaw_matrix} in which matrices of Toeplitz structure arise. This particular structure allows for a fast computation of the corresponding matrix-vector products using the FFT and its inverse (see \citep[Sect.\,4.2.4]{van_loan}). The storage complexity of the FDPT of \citeauthor{driscoll_healy_rockmore}, on the other hand, is $\mathcal{O}(n \log n)$, which slightly higher than that of the Clenshaw-Smith algorithm (cf.\ \citep[Sect.\,2.2, Rem.\,1]{driscoll_healy_rockmore}). 
A different, DCT-based FDPT was presented by \citep{potts_steidl_tasche_2}.
In this work, however, the FDPT is more of theoretical interest, as for the considered problem sizes, no significant advantage over the Clenshaw-Smith algorithm is to be expected.

\begin{remark}\label{rem:flt_clenshaw}
Since the associated Legendre polynomials satisfy the three-term recurrence relation \eqref{eq:Legendre_recurrence}, which is of the form \eqref{eq:clenshaw_recursion}, using the adjoint Clenshaw-Smith algorithm results in an FLT with an arithmetic complexity of $\mathcal{O}(n^3)$ and a storage complexity of $\mathcal{O}(n)$. If one uses instead of the Clenshaw-Smith algorithm an FDPT, then one obtains an FLT with an arithmetic complexity of only $\mathcal{O}(n^2 \log^2 n)$. When using the FDPT of \citeauthor{driscoll_healy_rockmore}, the storage complexity of such FLT is $\mathcal{O}(n^2 \log n)$ or even only $\mathcal{O}(n \log n)$ (cf.\ \citep[Sect.\,2.2, Rems.\ 1 \& 2]{driscoll_healy_rockmore}).
\end{remark}

\section{Derivation of the fast algorithms}
\label{sec:nfsglft}
Let the SGL Fourier coefficients $\hat{f}_{nlm}$ of a bandlimited function $f \in H$ with bandwidth $B \geq 2$, as well as points $x_i = [r_i, \vartheta_i, \varphi_i] \in \mathbb{R}^3$, $i = 0, \dots, M-1$, be given. Further, choose $\rho > 0$ such that $r_i \leq \rho$ holds for all $i \in \lbrace 0, \dots, M-1\rbrace$. We introduce the auxiliary function
\begin{equation*}
\gamma(r) \,=\, \gamma(\rho; r) \,\coloneqq\, \frac{2r - \rho}{\rho}, \quad\quad r \in [0, \rho].
\end{equation*}
The function $\gamma$ is a polynomial of degree one, mapping the interval $[0, \rho]$ bijectively onto the interval $[-1,1]$. We denote its inverse by $\gamma^{-1}$.

Now consider
\begin{align*}
f(r, \vartheta, \varphi) 
\,&=\, \sum\nolimits_{|m| \leq l < n \leq B} \hat{f}_{nlm} \, H_{nlm}(r, \vartheta, \varphi)\\
\,&=\, \sum\nolimits_{|m| \leq l < B} \underbrace{\left(\, \sum_{n = l+1}^{B} \hat{f}_{nlm} \, N_{nl} \, R_{nl}(r)\right)}_{\eqqcolon\, g_{lm}(r)} Y_{lm}(\vartheta,\varphi).\notag
\end{align*}
The functions $g_{lm}(r)$ are polynomials on $[0, \rho]$ of degree at most $2B - 2$. Using the auxiliary function $\gamma$, we rewrite them as
\begin{equation*}
g_{lm} \,=\, g_{lm} \circ \gamma^{-1} \circ \gamma.
\end{equation*}
The functions $\tilde{g}_{lm} \coloneqq g_{lm} \circ \gamma^{-1}$ are thus polynomials on $[-1,1]$ of degree at most $2B - 2$. With the DCT, we can expand these polynomials efficiently w.\,r.\,t.\ to the Chebyshev polynomials \eqref{eq:chebyshev}. To this end, we need to compute for $j = 0, \dots, 2B-1$ the values
\begin{equation*}
\tilde{g}_{lm}(\cos\omega_j) \,=\, g_{lm}(\gamma^{-1}(\cos\omega_j)) \,=\, g_{lm}\left(\frac{\rho}{2}(1 + \cos\omega_j)\right)
\end{equation*}
with $\omega_j \coloneqq (2j + 1) \pi / 4B$. The Clenshaw-Smith algorithm can achieve this for all pairs $[l, m]$, $|m| \leq l < B$, in $\mathcal{O}(B^2)$ steps each. This results in a complexity of $\mathcal{O}(B^4)$ for this first step. With the DCT, we can now compute for each pair $[l, m]$ the expansion coefficients 
\begin{equation}\label{eq:nfsglft_a_alpha}
\begin{bmatrix}
\alpha_{\kappa lm}
\end{bmatrix}_{\kappa = 0, \dots, 2B-1}
\,=\, D_{2B} \, \tilde{C}_{2B}
\begin{bmatrix}
\tilde{g}_{lm}(\cos\omega_j)
\end{bmatrix}_{j = 0, \dots, 2B-1}.
\end{equation}
This second step has a total complexity of $\mathcal{O}(B^3 \log B)$. By Lemma \ref{lem:dct_chebyshev}, it is
\begin{align*}
(\tilde{g}_{lm} \circ \cos)(\omega)
\,&=\, \sum_{\kappa = 0}^{2B-1} \alpha_{\kappa lm} \, (T_\kappa \circ \cos)(\omega)
\,=\, \sum_{\kappa = 0}^{2B-1} \alpha_{\kappa lm} \, \cos (\kappa \omega)
\,=\, \sum_{\kappa = 0}^{2B-1} \frac{\alpha_{\kappa lm}}{2} \big(\mathrm{e}^{\mathrm{i} \kappa \omega} + \mathrm{e}^{-\mathrm{i} \kappa \omega}\big).
\end{align*}
This results in (cf.\ Def.\,\ref{def:nfft_grid})
\begin{equation*}
g_{lm}(r_i) \,=\, \sum\nolimits_{\kappa \in I_{4B}^1} \beta_{\kappa lm} \, \mathrm{e}^{\mathrm{i} \kappa \arccos \gamma(r_i)}, \quad\quad i = 0, \dots, M-1,
\end{equation*}
wherein
\begin{equation}\label{eq:nfsglft_a_beta}
\beta_{\kappa lm} \,\coloneqq\,
\begin{cases}
\alpha_{0,l,m} & \textnormal{for } \kappa = 0,\\
\alpha_{|\kappa|,l,m}/2 & \textnormal{for } 0 < |\kappa| < 2B,\\
0 & \textnormal{for } \kappa = - 2B.
\end{cases}
\end{equation}
The complete radial subtransform described above has an arithmetic complexity of $\mathcal{O}(B^4)$, while the storage complexity is $\mathcal{O}(B^3)$, as can be checked easily. When using instead of the Clenshaw-Smith algorithm an adjoint FDPT, the arithmetic complexity reduces to $\mathcal{O}(B^3 \log^2 B)$, while the storage complexity remains the same.

We now consider for fixed $i \in \lbrace 0, \dots, M-1 \rbrace$ the spherical polynomial
\begin{align*}
f(r_i, \vartheta, \varphi) 
\,&=\, \sum\nolimits_{|m| \leq l < B} g_{lm}(r_i) \, Y_{lm}(\vartheta, \varphi)\\
\,&=\, \sum\nolimits_{\kappa_0 \in I_{4B}^1} \sum\nolimits_{|m| < B} \underbrace{\Bigg(\sum_{l = |m|}^{B-1} \beta_{\kappa_{0}, l,m} \, Q_{lm} \, P_{lm}(\cos\vartheta)\Bigg)}_{\eqqcolon\, h_{\kappa_0, m}(\cos\vartheta)} \mathrm{e}^{\mathrm{i} (\kappa_0 \arccos \gamma(r_i) + m \varphi)},
\end{align*}
where $Q_{lm} = \sqrt{\frac{2l+1}{4\pi}\frac{(l-m)!}{(l+m)!}}$ is the normalization constant of the spherical harmonic $Y_{lm}$.
For even $m$, the functions $h_{\kappa_{0}, m}$ are polynomials on $[-1,1]$ of degree as most $B-1$. It follows that
\begin{equation*}
h_{\kappa_0, m} \,=\, \sum_{\kappa = 0}^{2B-1} \varepsilon_{\kappa_0, \kappa, m} \, T_\kappa
\end{equation*}
with the expansion coefficients
\begin{equation}\label{eq:nfsglft_a_epsilon_even}
\begin{bmatrix}
\varepsilon_{\kappa_0,\kappa, m}
\end{bmatrix}_{\kappa = 0, \dots, 2B-1}
\,=\, D_{2B} \, \tilde{C}_{2B}
\begin{bmatrix}
h_{\kappa_0, m}(\cos\omega_j)
\end{bmatrix}_{j = 0, \dots, 2B-1}.
\end{equation}
As above, we get
\begin{equation*}
h_{\kappa_0, m}(\cos\vartheta_i) \,=\, \sum\nolimits_{\kappa \in I_{4B}^1} \zeta_{\kappa_0, \kappa, m} \, \mathrm{e}^{\mathrm{i} \kappa \vartheta_i},
\end{equation*}
wherein
\begin{equation}\label{eq:nfsglft_a_zeta_even}
\zeta_{\kappa_0, \kappa, m} \,\coloneqq\,
\begin{cases}
\varepsilon_{\kappa_0, 0, m} &\textnormal{for } \kappa = 0,\\
\varepsilon_{\kappa_0, |\kappa|, m} / 2 &\textnormal{for } 0 < |\kappa| < 2B,\\
0 &\textnormal{for } \kappa = - 2B.
\end{cases}
\end{equation}
In the case when $m$ is odd, the above approach is successful as well. In this case, $(1 - \xi^2)^{-1/2} h_{\kappa_0, m}(\xi)$ are polynomials on $[-1,1]$ of degree at most $B-2$. It follows that
\begin{equation*}
h_{\kappa_0, m}(\cos\omega) \,=\, \sum_{\kappa = 0}^{2B-2} \varepsilon_{\kappa_0, \kappa, m} \, \sin(\omega) \, T_\kappa(\cos\omega)
\end{equation*}
with the expansion coefficients
\begin{equation}\label{eq:nfsglft_a_epsilon_odd}
\begin{bmatrix}
\varepsilon_{\kappa_0, \kappa, m}
\end{bmatrix}_{\kappa = 0, \dots, 2B-1}
\,=\, D_{2B} \, \tilde{C}_{2B} \, V_B
\begin{bmatrix}
h_{\kappa_0, m}(\cos\omega_j)
\end{bmatrix}_{j = 0, \dots, 2B-1},
\end{equation}
where $V_B \coloneqq \textnormal{diag}[(\sin((2j+1)\pi/4n))^{-1}]_{j = 0, \dots, 2B-1}$ is an auxiliary matrix. Since
\begin{equation*}
\sin(\omega) \, T_\kappa(\cos\omega) \,=\, \sin(\omega) \cos(\kappa\omega) \,=\, \frac{1}{4\mathrm{i}}\big(\mathrm{e}^{\mathrm{i} (\kappa + 1) \omega} - \mathrm{e}^{- \mathrm{i} (\kappa + 1) \omega} - \mathrm{e}^{\mathrm{i} (\kappa - 1) \omega} + \mathrm{e}^{- \mathrm{i} (\kappa - 1) \omega}\big),
\end{equation*}
it is
\begin{equation*}
h_{\kappa_0, m}(\cos\vartheta_i) \,=\, \sum\nolimits_{\kappa \in I_{4B}^1} \zeta_{\kappa_0, \kappa, m} \, \mathrm{e}^{\mathrm{i} \kappa \vartheta_i},
\end{equation*}
wherein
\begin{equation}
\zeta_{\kappa_0, \kappa, m} \,\coloneqq\,
\frac{\textnormal{sgn} \, \kappa}{4 \mathrm{i}}
\begin{cases}\label{eq:nfsglft_a_zeta_odd}
0  & \textnormal{for } \kappa = 0,\\
2 \varepsilon_{\kappa_0, 0, m} - \varepsilon_{\kappa_0, 2, m} & \textnormal{for } |\kappa| = 1,\\
\varepsilon_{\kappa_0, |\kappa|-1, m} - \varepsilon_{\kappa_0, |\kappa| + 1, m} & \textnormal{for } 1 < |\kappa| < 2B - 2,\\
\varepsilon_{\kappa_0, |\kappa|-1, m} & \textnormal{for } 2B-2 \leq |\kappa| < 2B,\\
0  & \textnormal{for } \kappa = - 2B.
\end{cases}
\end{equation}

For each fixed $\kappa_0$, the function values $h_{\kappa_0, m}(\cos\omega_j)$ can be computed for all $m$ in a total of $\mathcal{O}(B^3)$ steps, using an adjoint FLT. For this, the adjoint FLT should be adapted such that in the case when $m$ is odd, the weighting by $V_B$ in \eqref{eq:nfsglft_a_epsilon_odd} is already included, in order to prevent stability issues from arising. The complexity of this step is $\mathcal{O}(B^4)$, or even only $\mathcal{O}(B^3 \log^2 B)$ when using an $\mathcal{O}(B^2 \log^2 B)$ adjoint FLT. Subsequently, for each fixed $\kappa_0$ and $m$, the coefficients $\varepsilon_{\kappa_0, \kappa, m}$ can be computed with the DCT. This step has a complexity of $\mathcal{O}(B^3 \log B)$. The above-described spherical subtransform thus has an overall arithmetic complexity of $\mathcal{O}(B^4)$, while the storage complexity amounts to $\mathcal{O}(B^3)$. When employing an $\mathcal{O}(B^2 \log^2 B)$ adjoint FLT, the arithmetic complexity is reduced to $\mathcal{O}(B^3 \log^2 B)$, while the storage complexity remains the same.

\begin{remark}\label{rem:nfsft}
The above spherical subtransform is essentially the \emph{non-equiangular fast spherical Fourier transform} (NFSFT) of \citet{kunis_potts}. As described above, spherical polynomials are there brought into the form of two-dimensional trigonometric polynomials, which can then be evaluated efficiently with the two-dimensional NFFT (see also \citep[Sect.\,3.3.1]{kunis}).
\end{remark}

We thus find that for $i = 0, \dots, M - 1$, it is
\begin{align}
f(x_i)
\,&=\, \sum\nolimits_{\kappa_0 \in I_{4B}^1} \sum\nolimits_{|m| < B} h_{\kappa_0, m}(\cos\vartheta_i) \, \mathrm{e}^{\mathrm{i} (\kappa_0 \arccos \gamma(r_i) + m \varphi_i)}\notag\\
\,&=\, \sum\nolimits_{\kappa_0 \in I_{4B}^1} \sum\nolimits_{\kappa_1 \in I_{4B}^1} \sum\nolimits_{|m| < B} \zeta_{\kappa_0, \kappa_1, m} \, \mathrm{e}^{\mathrm{i} (\kappa_0 \arccos \gamma(r_i) + \kappa_1 \vartheta_i + m \varphi_i)}\notag\\
\,&=\, \sum\nolimits_{k \in I_{4B}^3} \eta_k \, \mathrm{e}^{\mathrm{i} \langle k, \tilde{x}_i \rangle_2}\label{eq:nfsglft_nfft}
\end{align}
with the coefficients
\begin{equation*}
\eta_k \,\coloneqq\,
\begin{cases}
\zeta_{\kappa_0, \kappa_1, \kappa_2} & \textnormal{for } |\kappa_2 | < B,\\
0 & \textnormal{otherwise},
\end{cases} \quad\quad k \,\coloneqq\, [\kappa_0, \kappa_1, \kappa_2],
\end{equation*}
and with the transformed points
\begin{equation}\label{eq:x_tilde}
\tilde{x}_i \,\coloneqq\, [\arccos \gamma(r_i), \vartheta_i, \varphi_i] \,\in\, \mathbb{T}^3.
\end{equation}
In a last step, the right-hand side of \eqref{eq:nfsglft_nfft} can now be evaluated for all $i = 0, \dots, M - 1$ in a total of $\mathcal{O}((\sigma B)^3 \log (\sigma B) + q^3 M)$ steps, using the three-dimensional NFFT; 
here we let the oversampling factor $\sigma$ as well as the cutoff parameter $q < \sigma B$ be variable for now (cf.\ Sect.\,\ref{sec:nfft}). 

In summary, we have derived a class of NFSGLFTs with an arithmetic complexity of $\mathcal{O}(B^4 + (\sigma B)^3 \log (\sigma B) + q^3 M)$ or even only $\mathcal{O}(B^3 \log^2 B + (\sigma B)^3 \log (\sigma B) + q^3 M)$ and a storage complexity of $\mathcal{O}((\sigma B)^3)$. The role of the oversampling factor $\sigma$ and the cutoff parameter $q$ is elaborated in the next section.

We shall render the above class of NFSGLFTs as a factorization of the transformation matrix $\Lambda$ in Definition \ref{def:ndsglft}. To this end, we define the auxiliary matrices
\begin{equation*}
R_l \,=\, R_{l,B}(\rho) \,\coloneqq\,
\begin{bmatrix}
N_{nl} R_{nl}(\rho (1 + \cos\omega_j) / 2)
\end{bmatrix}_{\subalign{j &= 0, \dots, 2B-1 \\ n &= l+1, \dots, B}} \,\in\, \mathbb{R}^{2B \times (B-l)}, \quad\quad l < B,
\end{equation*}
associated with the radial subtransform. Let further $L_m = L_{m, B}$ be the Legendre matrices defined in \eqref{eq:legendre_matrix}. We introduce the permutation matrix
\begin{equation*}
S_B \,\coloneqq\,
\begin{bmatrix}
e_{\varkappa(\mu)}^\textnormal{T}
\end{bmatrix}_{\mu = 0, \dots, B(B+1)(2B+1)/6-1} \,\in\, \mathbb{R}^{B(B+1)(2B+1)/6 \times B(B+1)(2B+1)/6}
\end{equation*}
with the canonical unit (column) vectors $e_{\varkappa(\mu)}$ of length $B(B+1)(2B+1)/6$, where (cf.\ Eqs.\,\ref{eq:n_mu_l_mu_m_mu})
\begin{align*}
\varkappa(\mu) \,\coloneqq\, n(\mu) &+ \left(B + \left(\frac{2B-1}{2} - \frac{2l(\mu)-1}{3}\right)\Big(l(\mu)-1\Big) - 1\right)l(\mu) \\ 
&+ \Big(B-l(\mu)\Big)\Big(l(\mu)+m(\mu)\Big) - 1. \vphantom{\left(\frac{B}{2}\right)}
\end{align*}
The transposed matrix $S_B^\textnormal{T}$ resorts $n = 1, \dots, B$; $l = 0, \dots, n - 1$; $m = - l, \dots, l$, to $l = 0, \dots, B - 1$; $m = - l, \dots, l$; $n = l + 1, \dots, B$. Further, we employ the permutation matrix
\begin{equation*}
U_B \,\coloneqq\,
\begin{bmatrix}
e_{\varsigma(\psi)}^\textnormal{T}
\end{bmatrix}_{\psi = 0, \dots, 4B^3-1} \,\in\, \mathbb{R}^{4B^3 \times 4B^3}
\end{equation*}
with the canonical unit vectors $e_{\varsigma(\psi)}$ of length $4B^3$, where
\begin{equation*}
\varsigma(\psi) \,\coloneqq\, B^2 \, \kappa(\psi) + \frac{B^2 - B - m(\psi)^2 \, \textnormal{sgn} \, m(\psi)}{2} + \left(B - \frac{\textnormal{sgn} \, m(\psi)}{2} \right) m(\psi) + l(\psi),
\end{equation*}
wherein
\begin{equation*}
\kappa(\psi) \,\coloneqq\, \psi \textnormal{ mod } 4B, \vphantom{\frac{\psi}{2B}}\quad\quad
l(\psi) \,\coloneqq\, \bigg\lfloor \sqrt{\frac{\psi - \kappa(\psi)}{4B}} \bigg\rfloor,\quad\quad
m(\psi) \,\coloneqq\, \frac{\psi - \kappa(\psi)}{4B} - l(\psi)(l(\psi) + 1).
\end{equation*}
The transposed matrix $U_B^\textnormal{T}$ resorts $l = 1-B, \dots, B-1$; $m = -l, \dots, l$; $\kappa = -2B, \dots 2B-1$, to $\kappa = -2B, \dots, 2B-1$; $m = 1-B, \dots, B-1$; $l = |m|,\dots,B-1$. As a last permutation matrix, we introduce
\begin{equation*}
X_B \,\coloneqq\,
\begin{bmatrix}
e_{\tau(\iota)}^\textnormal{T}
\end{bmatrix}_{\iota = 0, \dots, 4B^3 - 1} \,\in\, \mathbb{R}^{4B(2B-1) \times 4B(2B-1)}
\end{equation*}
with the canonical unit vectors $e_{\tau(\iota)}$ of length $4B(2B-1)$, where
\begin{equation*}
\tau(\iota) \,\coloneqq\, B + (2B + \kappa(\iota))(2B - 1) + m(\iota) - 1,
\end{equation*}
wherein
\begin{equation*}
\kappa(\iota) \,\coloneqq\, (\iota \textnormal{ mod } 4B) - 2B,\quad\quad
m(\iota) \,\coloneqq\, \frac{\iota - \kappa(\iota) - 2B}{4B} - B + 1.
\end{equation*}
The transposed matrix $X_B^\textnormal{T}$ is for resorting $m = 1-B, \dots, B-1$; $\kappa = -2B, \dots, 2B - 1$, to $\kappa = -2B, \dots, 2B - 1$; $m = 1-B, \dots, B-1$.
In addition to the above permutation matrices, we employ the auxiliary matrices
\begin{equation*}
A_B \,\coloneqq\, \frac{1}{2}
\begin{bmatrix}
\begin{array}{c|ccc|c|ccc}
0 & & & & 2 & & &\\\hline
& & & 1 & & 1 & &\\
& & \textnormal{\reflectbox{$\ddots$}} & & & & \ddots &\\
& 1 & & & & & & 1
\end{array}
\end{bmatrix}^\textnormal{T} \in\, \mathbb{R}^{4B \times 2B},
\end{equation*}
as well as $W_m = W_{m,B} \coloneqq A_B$ for $m$ even and
\begin{equation*}
W_m 
\,\coloneqq\, \frac{1}{4 \mathrm{i}}
\begin{bmatrix}
\begin{array}{c|ccccc|c|ccccc}
0 & & & & & -2 & 0 & 2 & & &  &\\\hline
& & & & -1 & 0 & & 0 & 1 & & &\\
& & & -1 & 0 & 1 & & -1 & 0 & 1 & &\\
& & \textnormal{\reflectbox{$\ddots$}} & \textnormal{\reflectbox{$\ddots$}} & \textnormal{\reflectbox{$\ddots$}} & & & & \ddots & \ddots & \ddots &\\
& -1 & 0 & 1 & & & & & & -1 & 0 & 1\\\hline
0 & & & & & & & & & & &        
\end{array}
\end{bmatrix}^\textnormal{T} \in\, \mathbb{R}^{4B \times 2B}
\end{equation*}
for $m$ odd. With these matrices, the relation between \eqref{eq:nfsglft_a_alpha} and \eqref{eq:nfsglft_a_beta} can be written as
\begin{equation*}
\begin{bmatrix}
\beta_{\kappa lm}
\end{bmatrix}_{\kappa=-2B,\dots,2B-1}
\,=\, A_B
\begin{bmatrix}
\alpha_{\kappa lm}
\end{bmatrix}_{\kappa = 0,\dots,2B-1},
\end{equation*}
and the relation between \eqref{eq:nfsglft_a_epsilon_even}, or respectively \eqref{eq:nfsglft_a_epsilon_odd}, and \eqref{eq:nfsglft_a_zeta_even}, or respectively \eqref{eq:nfsglft_a_zeta_odd}, as
\begin{equation*}
\begin{bmatrix}
\zeta_{\kappa_0, \kappa, m}
\end{bmatrix}_{\kappa = -2B, \dots, 2B-1}
\,=\, W_m
\begin{bmatrix}
\varepsilon_{\kappa_0, \kappa, m}
\end{bmatrix}_{\kappa = 0, \dots, 2B-1}.
\end{equation*}
Both above relations represent a change from the Chebyshev to the monomial basis. As yet another auxiliary matrix, we define
\begin{equation*}
Z_B \,\coloneqq\,
\begin{bmatrix}\!
\begin{array}{c|c|c}
0_{2B-1, B+1} & \mathbbm{1}_{2B-1} & 0_{2B-1, B}
\end{array}\!
\end{bmatrix}
\,\in\, \mathbb{R}^{2B \times 4B}
\end{equation*}
with the zero matrices $0_{2B-1, B+1} \in \mathbb{R}^{(2B-1) \times (B+1)}$ and $0_{2B-1, B} \in \mathbb{R}^{(2B-1) \times B}$, and where here and in the following $\mathbbm{1}_n$ generally denotes the identity matrix of size $n \times n$.
The transposed matrix $Z_B^\textnormal{T}$ extends the range $\kappa = 0, \dots, B-1$ to $\kappa = -2B, \dots, 2B-1$ by zero padding. Finally, for the given points $x_0, \dots, x_{M-1}$, we introduce the special NDFT matrix (cf.\ Eq.\,\ref{eq:nfft_matrix})
\begin{equation*}
N \,=\, N(B; x_0, \dots, x_{M-1}) \,\coloneqq\,
\begin{bmatrix}
\mathrm{e}^{\mathrm{i} \langle k(\chi), \tilde{x}_i \rangle_2}
\end{bmatrix}_{\subalign{i &= 0, \dots, M-1\\ \chi &= 0, \dots, (4B)^3-1}} \in\, \mathbb{C}^{M \times (4B)^3}
\end{equation*}
with the transformed points $\tilde{x}_i$ defined in \eqref{eq:x_tilde}.
In combining all the above components, we can state the following main result:

\begin{theorem}\label{thm:nfsglft}
The matrix\/ $\Lambda$ in Definition \ref{def:ndsglft} possesses the factorization
\begin{align*}
\Lambda \,&=\,N \cdot \left\lbrace \mathbbm{1}_{(4B)^2} \otimes Z_B^\textnormal{T} \right\rbrace
\cdot
\left\lbrace \mathbbm{1}_{4B} \otimes X_B^\textnormal{T} \right\rbrace
\cdot 
\left\lbrace 
\vphantom{
\begin{bmatrix}
W_{1-B} \, D_{2B} \, \tilde{C}_{2B} & &\\
& \ddots &\\
& & W_{B-1} \, D_{2B} \, \tilde{C}_{2B}
\end{bmatrix}
}
\right.
\mathbbm{1}_{4B} \otimes
\overbrace{
\begin{bmatrix}
W_{1-B} \, D_{2B} \, \tilde{C}_{2B} & &\\
& \!\!\!\ddots\!\!\! &\\
& & W_{B-1} \, D_{2B} \, \tilde{C}_{2B}
\end{bmatrix}}^{\hspace*{-100pt}2B - 1 \textit{ blocks of size\/ $4B \times 2B$}\hspace*{-100pt}} 
\left. \vphantom{
\begin{bmatrix}
W_{1-B} \, D_{2B} \, \tilde{C}_{2B} & &\\
& \ddots &\\
& & W_{B-1} \, D_{2B} \, \tilde{C}_{2B}
\end{bmatrix}
}
\right\rbrace\\[4pt] 
&\times\, \left\lbrace 
\vphantom{
\begin{bmatrix}
\tilde{L}_{1-B}^\textnormal{T} & &\\
& \ddots &\\
& & \tilde{L}_{B-1}^\textnormal{T}
\end{bmatrix}
}
\right. 
\mathbbm{1}_{4B} \otimes
\underbrace{
\begin{bmatrix}
\tilde{L}_{1-B}^\textnormal{T} & &\\
& \!\!\!\ddots\!\!\! &\\
& & \tilde{L}_{B-1}^\textnormal{T}
\end{bmatrix}}_{\hspace*{-100pt}2B - 1 \textit{ blocks (see below)}\hspace*{-100pt}} 
\left. 
\vphantom{
\begin{bmatrix}
\tilde{L}_{1-B}^\textnormal{T} & &\\
& \ddots & \\
& & \tilde{L}_{B-1}^\textnormal{T}
\end{bmatrix}
}
\right\rbrace
\cdot U_B^\textnormal{T}
\cdot 
\underbrace{
\left\lbrace \mathbbm{1}_{B^2} \otimes \left(A_B \, D_{2B} \, \tilde{C}_{2B}\right) \right\rbrace 
}_{\hspace*{-100pt}B^2 \textit{ blocks of size\/ } 4B \times 2B \hspace*{-100pt}}
\cdot
\underbrace{
\begin{bmatrix}
\tilde{R}_{0} & &\\
& \!\!\!\ddots\!\!\! &\\
& & \tilde{R}_{B-1}
\end{bmatrix}}_{\hspace*{-100pt}B \textit{ blocks (see below)}\hspace*{-100pt}}
\cdot \, S_B^\textnormal{T}
\end{align*}
with
\begin{equation*}
\tilde{L}_m^\textnormal{T} \,\coloneqq\,
\begin{Bmatrix}
\begin{aligned}
\mathbbm{1}_{2B} &\textit{ for\/ } m \textit{ even}\\
V_{B}  &\textit{ for\/ } m \textit{ odd}
\end{aligned}
\end{Bmatrix} 
\cdot L_m^\textnormal{T} \cdot 
\textnormal{diag}
\begin{bmatrix}
Q_{lm}
\end{bmatrix}_{l = |m|, \dots, B-1} \,\in\, \mathbb{R}^{2B \times (B-|m|)}, \quad\quad |m| < B,
\end{equation*}
and the block-diagonal matrices
\begin{equation*}
\tilde{R}_l \,\coloneqq\, 
\underbrace{\mathbbm{1}_{2l+1} \otimes R_l}_{\hspace*{-100pt}2l+1 \textit{ blocks of size\/ } 2B \times (B-l)\hspace*{-100pt}}, \quad\quad l < B.
\end{equation*}
\end{theorem}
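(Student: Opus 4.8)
The plan is to establish the asserted identity by applying the factors on the right-hand side to an arbitrary Fourier vector $\hat f = [\hat f_{n(\mu),l(\mu),m(\mu)}]_\mu$ in right-to-left order, and matching each resulting intermediate vector against the quantities introduced in the derivation of this section. Since both $\Lambda$ and the product on the right are linear maps on $\mathbb{C}^{B(B+1)(2B+1)/6}$, agreement of their action on a general $\hat f$ suffices. Concretely, I would track the intermediate vectors and their index sets through the chain and confirm that they coincide successively with $\tilde g_{lm}(\cos\omega_j)$, $\alpha_{\kappa lm}$, $\beta_{\kappa lm}$, $h_{\kappa_0,m}(\cos\omega_j)$, $\varepsilon_{\kappa_0,\kappa,m}$, $\zeta_{\kappa_0,\kappa,m}$, and finally $\eta_k$, culminating in $f(x_i)=\sum_{k\in I_{4B}^3}\eta_k\,\mathrm{e}^{\mathrm{i}\langle k,\tilde x_i\rangle_2}$ from \eqref{eq:nfsglft_nfft}.

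First I would verify the radial stage: $S_B^\textnormal{T}$ regroups the coefficients so that $n$ runs innermost for each fixed pair $[l,m]$, and the block-diagonal factor with blocks $\tilde R_l=\mathbbm{1}_{2l+1}\otimes R_l$ applies $R_l$ to each vector $[\hat f_{nlm}]_{n=l+1}^{B}$; since $\rho(1+\cos\omega_j)/2=\gamma^{-1}(\cos\omega_j)$, the definition of $R_l$ yields $[\tilde g_{lm}(\cos\omega_j)]_j$ including the normalization $N_{nl}$. The factor $\mathbbm{1}_{B^2}\otimes(A_B\,D_{2B}\,\tilde C_{2B})$ then acts on each of the $B^2$ admissible pairs $[l,m]$: the block $D_{2B}\tilde C_{2B}$ produces the Chebyshev coefficients $\alpha_{\kappa lm}$ by Lemma \ref{lem:dct_chebyshev} (cf.\ \eqref{eq:nfsglft_a_alpha}), and $A_B$ implements the basis change \eqref{eq:nfsglft_a_beta} to the exponentials, giving $\beta_{\kappa lm}$, $\kappa\in I_{4B}^1$.

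Next I would treat the spherical stage. After the reordering $U_B^\textnormal{T}$, which brings the data into the grouping $\kappa_0;m;l$, the block-diagonal factor $\mathbbm{1}_{4B}\otimes\operatorname{diag}(\tilde L_m^\textnormal{T})$ applies, for each of the $4B$ values of $\kappa_0$ and each $m$, the adjoint Legendre evaluation $\tilde L_m^\textnormal{T}$: with the $Q_{lm}$ normalization and, for odd $m$, the $V_B$ weighting folded in, this computes $[h_{\kappa_0,m}(\cos\omega_j)]_j$ (respectively its $V_B$-weighted version). The subsequent factor $\mathbbm{1}_{4B}\otimes\operatorname{diag}(W_m\,D_{2B}\,\tilde C_{2B})$ applies the DCT \eqref{eq:nfsglft_a_epsilon_even}\,/\,\eqref{eq:nfsglft_a_epsilon_odd} to obtain $\varepsilon_{\kappa_0,\kappa,m}$, whereupon $W_m$ realizes the basis change \eqref{eq:nfsglft_a_zeta_even} (even $m$) or \eqref{eq:nfsglft_a_zeta_odd} (odd $m$) to produce $\zeta_{\kappa_0,\kappa,m}$. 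Finally, $\mathbbm{1}_{4B}\otimes X_B^\textnormal{T}$ reorders into $\kappa_0;\kappa_1;m$, the factor $\mathbbm{1}_{(4B)^2}\otimes Z_B^\textnormal{T}$ zero-pads the range $m\in\{1-B,\dots,B-1\}$ into the full index set (yielding $\eta_k$, $k\in I_{4B}^3$), and multiplication by $N$ evaluates the resulting three-dimensional trigonometric polynomial at the transformed points $\tilde x_i$ of \eqref{eq:x_tilde}, which equals $f(x_i)$.

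Since the analytic content of each block is already supplied by Lemma \ref{lem:dct_chebyshev}, the definitions of $R_l$ and $L_m$, and the identities \eqref{eq:nfsglft_a_alpha}--\eqref{eq:nfsglft_a_zeta_odd}, I expect the main obstacle to be purely combinatorial: verifying that the permutation matrices $S_B$, $U_B$, $X_B$ (through their explicit index maps $\varkappa$, $\varsigma$, $\tau$) together with the Kronecker factors $\mathbbm{1}_\bullet\otimes(\cdot)$ align the index orderings exactly at each interface between consecutive stages, so that every block is fed precisely the slice of the multi-index $[\kappa_0,\kappa_1,m]$ it expects while the remaining coordinates play the role of the identity factor. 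This bookkeeping is delicate because the radial, spherical-Legendre, and two cosine subtransforms each act on a different coordinate, and the even/odd-$m$ case split must be respected consistently in both $\tilde L_m^\textnormal{T}$ and $W_m$.
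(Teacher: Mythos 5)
Your proposal is correct and follows essentially the same route as the paper: the paper offers no separate formal proof of Theorem \ref{thm:nfsglft}, but rather presents it as the matrix form of the step-by-step derivation in Section \ref{sec:nfsglft}, which is exactly the right-to-left tracking of intermediate vectors ($\tilde g_{lm}(\cos\omega_j)$, $\alpha_{\kappa lm}$, $\beta_{\kappa lm}$, $h_{\kappa_0,m}(\cos\omega_j)$, $\varepsilon_{\kappa_0,\kappa,m}$, $\zeta_{\kappa_0,\kappa,m}$, $\eta_k$) that you describe, with the permutation matrices $S_B$, $U_B$, $X_B$ and the Kronecker structure supplying the index bookkeeping you correctly identify as the only remaining (combinatorial) work.
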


The matrices $L_m^\textnormal{T}$ and $R_l$ can now be factorized themselves, as mentioned in Sections \ref{sec:flt} and \ref{sec:clenshaw}; when using the adjoint semi-naive FLT and for the radial part the Clenshaw-Smith algorithm, the factorization of these matrices is given by \citep[Cor.\,2.2.15]{wuelker} and \citep[Thm.\,2.2.21]{wuelker}, respectively.
Reverting the order of the factors and conjugate-transposing each factor while taking into account the laws of the Kronecker product, we get with Lemma \ref{lem:idct} as a direct consequence of Theorem \ref{thm:nfsglft} the following second main result of this section. It shows that we also have an adjoint NFSGFLT with the same arithmetic and storage complexity.

\begin{corollary}
The matrix\/ $\Lambda^\textnormal{H}$ can be factorized as
\begin{align*}
\Lambda^\textnormal{H} \,&=\,
S_B \cdot
\begin{bmatrix}
\tilde{R}_0^\textnormal{T} & &\\
& \!\!\!\ddots\!\!\! &\\
& & \tilde{R}_{B-1}^\textnormal{T}
\end{bmatrix}
\cdot \left\lbrace \mathbbm{1}_{B^2} \otimes \left(\tilde{C}_{2B}^{-1} \, D_{2B}^\textnormal{T} \, A_{B}^\textnormal{T}\right) \right\rbrace
\cdot U_B
\cdot \left\lbrace \mathbbm{1}_{4B} \otimes
\begin{bmatrix}
\tilde{L}_{1-B} & &\\
& \!\!\!\ddots\!\!\! &\\
& & \tilde{L}_{B-1}
\end{bmatrix} \right\rbrace\\[4pt]
&\times\,
\left\lbrace \mathbbm{1}_{4B} \otimes
\begin{bmatrix}
\tilde{C}_{2B}^{-1} \, D_{2B}^\textnormal{T} \, W_{1-B}^\textnormal{H} & &\\
& \!\!\!\ddots\!\!\! &\\
& & \tilde{C}_{2B}^{-1} \, D_{2B}^\textnormal{T} \, W_{B-1}^\textnormal{H} \,
\end{bmatrix} \right\rbrace
\cdot 
\left\lbrace \mathbbm{1}_{4B} \otimes X_B \right\rbrace \cdot \left\lbrace \mathbbm{1}_{(4B)^2} \otimes Z_B \right\rbrace
\cdot N^\textnormal{H}.
\end{align*}
\end{corollary}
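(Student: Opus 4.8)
The plan is to derive the factorization of $\Lambda^\textnormal{H}$ directly from the factorization of $\Lambda$ established in Theorem \ref{thm:nfsglft} by applying the Hermitian conjugate to both sides. Writing the right-hand side of Theorem \ref{thm:nfsglft} as a product $\Lambda = N \cdot F_1 \cdots F_k$ of the nine displayed factors, the anti-multiplicativity of the adjoint, $(A_1 \cdots A_k)^\textnormal{H} = A_k^\textnormal{H} \cdots A_1^\textnormal{H}$, immediately reverses their order. It then remains only to compute the Hermitian conjugate of each factor in turn and to check that the resulting reversed list coincides term by term with the displayed factorization of $\Lambda^\textnormal{H}$.

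First I would dispose of the permutation and identity factors. The matrices $S_B$, $U_B$, $X_B$, $Z_B$, and the Kronecker identities $\mathbbm{1}$ are all real, so their adjoint equals their transpose and $(\,\cdot^\textnormal{T})^\textnormal{H} = \cdot$\,; using the Kronecker rule $(A \otimes B)^\textnormal{H} = A^\textnormal{H} \otimes B^\textnormal{H}$ on the tensor factors, the factors $S_B^\textnormal{T}$, $\mathbbm{1}_{4B} \otimes X_B^\textnormal{T}$, $\mathbbm{1}_{(4B)^2} \otimes Z_B^\textnormal{T}$, and $U_B^\textnormal{T}$ conjugate-transpose back to $S_B$, $\mathbbm{1}_{4B} \otimes X_B$, $\mathbbm{1}_{(4B)^2} \otimes Z_B$, and $U_B$, while $N$ becomes $N^\textnormal{H}$. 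Next I would treat the block-diagonal factors, using that the adjoint distributes over a block-diagonal matrix block by block. The Legendre factor $\mathbbm{1}_{4B} \otimes \textnormal{diag}[\tilde{L}_m^\textnormal{T}]$ yields $\mathbbm{1}_{4B} \otimes \textnormal{diag}[\tilde{L}_m]$ and the radial factor $\textnormal{diag}[\tilde{R}_l]$ yields $\textnormal{diag}[\tilde{R}_l^\textnormal{T}]$, since $\tilde{L}_m$ and $\tilde{R}_l$ are real. For the DCT-bearing factors, the crucial ingredient is Lemma \ref{lem:idct}: because $\tilde{C}_{2B}$ is real, $\tilde{C}_{2B}^\textnormal{H} = \tilde{C}_{2B}^\textnormal{T} = \tilde{C}_{2B}^{-1}$, so that a block $A_B D_{2B} \tilde{C}_{2B}$ conjugate-transposes to $\tilde{C}_{2B}^{-1} D_{2B}^\textnormal{T} A_B^\textnormal{T}$ (with $D_{2B}$ real diagonal and $A_B$ real), and a block $W_m D_{2B} \tilde{C}_{2B}$ to $\tilde{C}_{2B}^{-1} D_{2B}^\textnormal{T} W_m^\textnormal{H}$.

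The argument is essentially bookkeeping, and the main obstacle is care rather than depth: one must apply the three transposition rules\,---\,anti-multiplicativity of products, the Kronecker identity $(A \otimes B)^\textnormal{H} = A^\textnormal{H} \otimes B^\textnormal{H}$, and block-wise conjugation of block-diagonal matrices\,---\,simultaneously and without error, and stay alert to which factors are real and which are genuinely complex. In particular, for odd $m$ the matrix $W_m$ carries the complex prefactor $1/4\mathrm{i}$, so its adjoint $W_m^\textnormal{H}$ differs from its transpose; this is precisely why the corollary records $W_m^\textnormal{H}$ rather than $W_m^\textnormal{T}$. Collecting the reversed list of conjugate-transposed factors then reproduces the displayed factorization of $\Lambda^\textnormal{H}$ verbatim, which completes the proof.
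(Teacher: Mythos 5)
Your proposal is correct and follows exactly the route the paper takes: conjugate-transpose the factorization of Theorem \ref{thm:nfsglft}, reverse the order of the factors, apply the Kronecker and block-diagonal transposition rules, and invoke Lemma \ref{lem:idct} to rewrite $\tilde{C}_{2B}^{\textnormal{T}}$ as $\tilde{C}_{2B}^{-1}$. Your observation that the complex prefactor $1/4\mathrm{i}$ in $W_m$ for odd $m$ is what forces $W_m^{\textnormal{H}}$ rather than $W_m^{\textnormal{T}}$ is exactly the right point of care.
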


The matrices $L_m$ and $R_l^\textnormal{T}$ contained here can now be factorized themselves as well, see \citep[Thm.\,2.2.14]{wuelker} for the semi-naive FLT and \citep[Cor.\,2.2.23]{wuelker} for the adjoint Clenshaw-Smith algorithm. In the same manner as explained for the NFFT at the end of Section \ref{sec:nfft}, the NFSGLFT and its adjoint can be employed for an iterative inverse NFSGLFT, \textit{i.\,e.}, the fast CG algorithm for computing the SGL Fourier coefficients $\hat{f}_{nlm}$ of a bandlimited function $f$ from given scattered data $f(x_i)$.

\section{Error estimate}
\label{sec:error}
By careful consideration it becomes apparent that the only approximating part of the N\-F\-SGL\-FT derived above is the final NFFT; the matrix factorization in Theorem \ref{thm:nfsglft}, on the other hand, is exact. With Theorems \ref{thm:nfft_error} and \ref{thm:nfsglft}, we can thus derive an estimate for the maximum absolute error of the NFSGLFT. The latter is defined as
\begin{equation}\label{eq:nfsglft_error_def}
E_\infty \,=\, E_\infty(B; \sigma, q; \rho; x_0, \dots, x_{M-1}; \hat{f}) \,\coloneqq\, \max_{i \in \lbrace 0, \dots, M-1\rbrace} |f(x_i) - \tilde{f}(x_i)|
\end{equation}
with the given SGL Fourier coefficients $\hat{f} = [\hat{f}_{n(\mu),l(\mu),m(\mu)}]_{\mu = 0, \dots, B(B+1)(2B+1)/6-1}$ and the output result $[\tilde{f}(x_i)]_{i = 0, \dots, M-1}$ of the NFSGLFT. Here, $\sigma$ is the oversampling factor of the NFFT and $q$ its cutoff parameter.

\begin{theorem}\label{thm:nfsglft_error}
Using the NFFT of Section \ref{sec:nfft} with the Gaussian ansatz function and\/ $\sigma \geq 2$, the maximum absolute error of the NFSGLFT of Section \ref{sec:nfsglft} is bounded by
\begin{equation*}
E_\infty \,\lesssim\, B^{7/2} \, a_{\rho, B} \, \exp\bigg(b_{\rho, B} \left(B + \frac{1}{2}\right)^{1-1/\mathrm{e}} + \frac{\rho^2}{2} - \frac{q \pi}{2}\bigg) \, \|\hat{f}\|_1
\end{equation*}
with the coefficients
\begin{align*}
a_{\rho, B} \,\coloneqq\,
&\begin{cases}
1 & \textit{if\/ } \rho < 1 \textit{ and\/ } B \leq \Omega(\rho),\\
\rho^{-1} & \textit{otherwise},
\end{cases}\quad\quad
b_{\rho, B} \,\coloneqq\, \frac{\mathrm{e}^{2 / \mathrm{e}}}{2}
\begin{cases}
1 & \textit{if\/ } \rho < 1 \textit{ and\/ } B \leq \Omega(\rho),\\
\rho^{2 / \mathrm{e}} & \textit{otherwise},
\end{cases}
\end{align*}
wherein
\begin{equation*}
\Omega(\rho) \coloneqq \bigg(\frac{\mathrm{e}^{2 / \mathrm{e}}(\rho^{2 / \mathrm{e}} - 1)}{2 \ln\rho}\bigg)^{1 - 1/\mathrm{e}} - \frac{1}{2}.
\end{equation*}
\end{theorem}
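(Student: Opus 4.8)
The plan is to exploit the observation made just before the statement: the matrix factorization of Theorem~\ref{thm:nfsglft} is \emph{exact}, so every approximation in the NFSGLFT is confined to the final three-dimensional NFFT applied to the trigonometric polynomial \eqref{eq:nfsglft_nfft}. Consequently $\tilde f(x_i)$ is precisely the NFFT approximation of $\sum_{k\in I_{4B}^3}\eta_k\,\mathrm e^{\mathrm i\langle k,\tilde x_i\rangle_2}$, and the quantity \eqref{eq:nfsglft_error_def} is exactly the maximum absolute NFFT error for that polynomial. First I would invoke Theorem~\ref{thm:nfft_error} with $d=3$ and $\sigma\ge2$: since the nonzero Fourier coefficients of this polynomial are exactly the $\eta_k$, the coefficient vector $w$ of the theorem satisfies $\|w\|_1=\|\eta\|_1$, and the theorem gives $E_\infty\lesssim\|\eta\|_1\,\mathrm e^{-q\pi/2}$. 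This accounts for the factor $\mathrm e^{-q\pi/2}$ in the claimed bound and reduces everything to estimating $\|\eta\|_1$ in terms of $\|\hat f\|_1$.

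For the second part I would read off from Theorem~\ref{thm:nfsglft} that $\eta$ (equivalently $w$) arises from $\hat f$ by applying every factor of $\Lambda$ except the leftmost NDFT matrix $N$, so that $\|\eta\|_1$ is controlled by the product of the induced $\ell^1\!\to\!\ell^1$ norms (maximal absolute column sums) of these factors. I would bound them in turn: the permutations $S_B^\mathrm T,U_B^\mathrm T,X_B^\mathrm T$ and the zero-padding $Z_B^\mathrm T$ have norm $1$; the sparse matrices $A_B$ and $W_m$ have uniformly bounded column sums; and each composite DCT factor $D_{2B}\tilde C_{2B}$ is $\mathcal O(1)$, because Lemma~\ref{lem:dct_1_norm} gives $\|\tilde C_{2B}x\|_1\le\sqrt{2B}\,\|x\|_1$ while the diagonal $D_{2B}$ contributes a factor $\sqrt{2/(2B)}$. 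The Legendre blocks $\tilde L_m^\mathrm T$ I would estimate using the classical normalized-spherical-harmonic bound $|Q_{lm}P_{lm}(\cos\vartheta)|\le\sqrt{(2l+1)/(4\pi)}=\mathcal O(\sqrt B)$; since each column ranges over the $2B$ quadrature angles, this yields a column-sum bound of order $B^{3/2}$. For odd $m$ the extra diagonal $V_B$ has entries as large as $\mathcal O(B)$, but this is exactly compensated by the factor $\sin\vartheta$ present in $P_{lm}$ for $|m|\ge1$, so that $V_B L_m^\mathrm T$ obeys the same $\mathcal O(\sqrt B)$ entrywise bound; I would make this cancellation explicit.

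The crux, and the main obstacle, is the radial block $\tilde R_l=\mathbbm 1_{2l+1}\otimes R_l$. Its column-sum norm is at most $2B\,\max_{l<n\le B}\max_{r\in[0,\rho]}N_{nl}|R_{nl}(r)|$, so everything hinges on a uniform sup-norm bound for the normalized radial functions on $[0,\rho]$. Writing $N_{nl}|R_{nl}(r)|=\big(N_{nl}|R_{nl}(r)|\,\mathrm e^{-r^2/2}\big)\,\mathrm e^{r^2/2}$ and using $r\le\rho$ extracts the factor $\mathrm e^{\rho^2/2}$; it then remains to bound the normalized Laguerre quantity $N_{nl}|L_{n-l-1}^{(l+1/2)}(r^2)|\,r^l\mathrm e^{-r^2/2}$. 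Here I would pass to an explicit monotone majorant of the generalized Laguerre polynomial (for instance by replacing the argument $r^2$ by $-\rho^2$, which renders all coefficients positive), insert the closed form of $N_{nl}$, and maximize the resulting expression over the admissible orders $l\in\{0,\dots,n-1\}$ and degrees $n\le B$. This is the delicate step: balancing the growth $\rho^l$ against the decay stemming from $N_{nl}$ and $\Gamma(l+3/2)$ produces the non-polynomial growth $\exp\!\big(b_{\rho,B}(B+1/2)^{1-1/\mathrm e}\big)$, with the transcendental exponents $1-1/\mathrm e$ and $2/\mathrm e$ and the constant $\mathrm e^{2/\mathrm e}/2$ emerging from the optimality condition, and with the case distinction in $a_{\rho,B},b_{\rho,B}$ and the threshold $\Omega(\rho)$ arising precisely from comparing the regimes $\rho<1$ and $\rho\ge1$ (equivalently, whether the maximizing order falls in the interior or at the boundary of the admissible range). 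I expect the careful execution and sharpening of this radial estimate\,--\,rather than the routine factorwise bookkeeping\,--\,to be where essentially all the work lies.

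Finally I would collect the estimates. The polynomial-in-$B$ contributions\,--\,the factor $2B$ from the radial column length, the $\mathcal O(B^{3/2})$ Legendre column-sum bound, and the $\mathcal O(B)$ algebraic prefactor of the radial sup-norm estimate\,--\,multiply to the overall factor $B^{7/2}$, while the DCT and sparse-matrix factors contribute only $\mathcal O(1)$. Combining this prefactor with $\mathrm e^{\rho^2/2}$, the radial growth $\exp\!\big(b_{\rho,B}(B+1/2)^{1-1/\mathrm e}\big)$ and the NFFT factor $\mathrm e^{-q\pi/2}$, and absorbing the factor norms so that the estimate is expressed through $\|\hat f\|_1=\|\eta\|_1$ up to constants, yields the stated bound on $E_\infty$.
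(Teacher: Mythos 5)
Your overall strategy is exactly the paper's: the factorization of Theorem \ref{thm:nfsglft} is exact, so $E_\infty$ is the NFFT error of the polynomial \eqref{eq:nfsglft_nfft}, Theorem \ref{thm:nfft_error} with $d=3$ supplies the factor $\mathrm{e}^{-q\pi/2}$, and everything reduces to tracking the $\ell^1\to\ell^1$ norms of the remaining factors. The bookkeeping for the permutations, $Z_B$, $A_B$, $W_m$ and the DCT blocks matches the paper. However, there is a genuine gap in your treatment of the odd-$m$ Legendre blocks. You claim that the diagonal $V_B$ is ``exactly compensated'' by the factor $\sin\vartheta$ in $P_{lm}$, so that $V_B L_m^{\mathrm{T}}$ obeys the same $\mathcal{O}(\sqrt{B})$ entrywise bound as the even case. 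This is false: the cancellation removes the singularity but not the growth. For $m=1$ one has $P_{l1}(\cos\vartheta)/\sin\vartheta=-P_l'(\cos\vartheta)$, whose maximum is $l(l+1)/2$; after multiplying by $Q_{l1}\sim l^{-1/2}$ the entrywise bound is of order $l^{3/2}$, not $l^{1/2}$, and the quadrature nodes $\cos\omega_j$ come close enough to $\pm1$ that this is essentially attained. The paper needs the recursion of Lemma \ref{lem:legendre_odd} and the resulting Lemma \ref{lem:legendre_bound_odd} ($\lesssim n$, i.e.\ an extra factor $B$) precisely for this. Your final power $B^{7/2}$ comes out right only because you simultaneously insert an unjustified $\mathcal{O}(B)$ ``algebraic prefactor'' into the radial sup-norm estimate, which the paper's radial bound does not have; the two errors cancel, but the intermediate claims are wrong.

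Two further points. First, your proposed Laguerre majorant (replacing the argument $r^2$ by $-\rho^2$ to make all coefficients positive) yields $|L_{n-l-1}^{(l+1/2)}(r^2)|\le\binom{n-1/2}{n-l-1}\mathrm{e}^{\rho^2}$ rather than the $\mathrm{e}^{\rho^2/2}$ of Lemma \ref{lem:laguerre_bound}, so after taking square roots you would obtain $\mathrm{e}^{\rho^2}$ in place of the stated $\mathrm{e}^{\rho^2/2}$. Second, the heart of the proof\,--\,turning $\Gamma(n+1/2)\rho^{2l}/(\Gamma(n-l)\Gamma(l+3/2)^2)$ into $a_{\rho,B}\exp\bigl(b_{\rho,B}(B+1/2)^{1-1/\mathrm{e}}\bigr)$ with the threshold $\Omega(\rho)$\,--\,is only announced, not executed; the paper gets the exponent $1-1/\mathrm{e}$ not from an optimality condition but from the concrete chain of Lemmata \ref{thm:nfsglft_error_lem_1}--\ref{thm:nfsglft_error_lem_3} (log-convexity of $\Gamma$, Batir's lower bound, and $\ln\xi\le\xi^{1/\mathrm{e}}$), and the case distinction comes from comparing the bound with and without the summand $l\ln\rho^2$. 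You correctly identify this as where the work lies, but as written the proposal neither supplies these ingredients nor arrives at the stated constants.
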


For the proof of this error estimate, we require the following technical Lemmata \ref{lem:laguerre_bound}--\ref{lem:legendre_bound_odd}.

\begin{lemma}[{cf.\ \citep[Eq.\,22.14.13]{abramowitz_stegun}}]\label{lem:laguerre_bound}
For\/ $\alpha \geq 0$, we have that
\begin{equation*}
\big| L_n^{(\alpha)}(\xi) \big| \,\leq\, {n + \alpha \choose n} \, \mathrm{e}^{\xi / 2}, \quad\quad \xi \in [0,\infty).
\end{equation*}
\end{lemma}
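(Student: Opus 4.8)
The plan is to prove the bound by reducing arbitrary order $\alpha \ge 0$ to the single case $\alpha = 0$ via a Sonine-type fractional-integral representation, and to dispose of the exponential factor by one elementary estimate. Writing $\alpha = \mu$, I would start from the identity
\[
L_n^{(\mu)}(\xi) \,=\, \frac{\Gamma(n+\mu+1)}{\Gamma(\mu)\,n!}\,\xi^{-\mu}\int_0^\xi (\xi - t)^{\mu-1}\,L_n^{(0)}(t)\,\mathrm{d}t, \qquad \mu > 0,
\]
which is verified directly: inserting the monomial expansion $L_n^{(0)}(t)=\sum_{k=0}^n(-1)^k\binom{n}{k}t^k/k!$, evaluating each $\xi^{-\mu}\int_0^\xi(\xi-t)^{\mu-1}t^k\,\mathrm{d}t = \xi^k\,\Gamma(\mu)\,k!/\Gamma(\mu+k+1)$ by the Beta integral, and comparing the coefficient of $\xi^k$ with that of the closed form $L_n^{(\mu)}(\xi)=\sum_k(-1)^k\binom{n+\mu}{n-k}\xi^k/k!$, the two sides agree term by term.

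Granting the base case, the general bound would then follow from a short estimate. Taking absolute values in the identity and using $|L_n^{(0)}(t)|\le\mathrm{e}^{t/2}$ for $t\ge0$, the substitution $s=\xi-t$ turns the integral into $\mathrm{e}^{\xi/2}\int_0^\xi s^{\mu-1}\mathrm{e}^{-s/2}\,\mathrm{d}s$, and the single inequality $\mathrm{e}^{-s/2}\le1$ yields $\int_0^\xi s^{\mu-1}\mathrm{e}^{-s/2}\,\mathrm{d}s \le \xi^\mu/\mu$. Collecting constants via $\Gamma(n+\mu+1)/(\mu\,\Gamma(\mu)\,n!)=\Gamma(n+\mu+1)/(\Gamma(\mu+1)\,n!)=\binom{n+\mu}{n}$, the factors $\xi^{-\mu}$ and $\xi^{\mu}$ cancel and one obtains $|L_n^{(\mu)}(\xi)|\le\binom{n+\mu}{n}\mathrm{e}^{\xi/2}$ \emph{exactly} for every $\mu>0$. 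Notably, no loss of the sharp constant occurs, because the kernel integral is controlled solely by the trivial bound $\mathrm{e}^{-s/2}\le1$.

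The one genuinely nontrivial ingredient, and hence the main obstacle, is the base case $\mu=0$, i.e.\ $|L_n^{(0)}(\xi)|\le\mathrm{e}^{\xi/2}$ for $\xi\ge0$ (here $\binom{n}{n}=1$). This is the $\alpha=0$ instance of the very inequality being proved, and its sharp constant depends on the sign cancellation of the oscillatory polynomial in the range $\xi\lesssim 4n$. I would therefore \emph{cite} it from \citep{abramowitz_stegun} (equivalently Szeg\H{o}) rather than reprove it, since the coarse estimates available to me do not reach it: naively applying the triangle inequality to the alternating series fails badly (the $k=n$ term alone forces a factor $n!$), and extracting the single coefficient from the generating function $\sum_n L_n^{(0)}(\xi)\mathrm{e}^{-\xi/2}t^n=(1-t)^{-1}\exp\!\big(-\xi(1+t)/2(1-t)\big)$, either by the maximum-modulus Cauchy estimate on $|t|=r$ or by the positivity of the coefficients of $(1-t)^{-1}$, loses a polynomial factor in $n$ (minimizing $(1-r)^{-1}r^{-n}$ over $r$ overshoots $\binom{n}{n}=1$ by a factor $\sim n$). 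Likewise, the natural energy $G(\xi)=\xi\,w'(\xi)^2+(n+\tfrac12-\tfrac\xi4)\,w(\xi)^2$ for $w=\mathrm{e}^{-\xi/2}L_n^{(0)}$, though monotone nonincreasing, degenerates at the turning point $\xi=4n+2$ and does not deliver $|w|\le1$.

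In summary, the whole difficulty is concentrated in the $\alpha=0$ inequality; once that is in hand, the fractional-integral identity serves as the concise engine that propagates the bound to all admissible orders $\alpha\ge0$ while preserving the sharp constant $\binom{n+\alpha}{n}$, so I would present the proof as base case (cited) plus the one-line lift above.
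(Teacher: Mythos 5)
Your lift from $\alpha=0$ to general $\alpha\geq 0$ is correct: the Sonine-type identity is the standard fractional-integral formula (the $\alpha=0$ instance of $L_n^{(\alpha+\mu)}$ in terms of $L_n^{(\alpha)}$), your term-by-term verification via the Beta integral $\xi^{-\mu}\int_0^\xi(\xi-t)^{\mu-1}t^k\,\mathrm{d}t=\xi^k\,\Gamma(\mu)\,k!/\Gamma(\mu+k+1)$ checks out against the hypergeometric closed form, and the bookkeeping $\Gamma(n+\mu+1)/(\mu\,\Gamma(\mu)\,n!)=\binom{n+\mu}{n}$ together with $\mathrm{e}^{-s/2}\leq 1$ indeed delivers the bound with the sharp constant and no loss. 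The comparison with the paper, however, is lopsided in an instructive way: the paper offers \emph{no proof at all}\,---\,the lemma is stated with the tag ``cf.\ \citep[Eq.\,22.14.13]{abramowitz_stegun}'', and that equation of Abramowitz and Stegun is precisely the claimed inequality $|L_n^{(\alpha)}(\xi)|\leq\binom{n+\alpha}{n}\mathrm{e}^{\xi/2}$ for \emph{all} $\alpha\geq 0$, not merely the case $\alpha=0$. So relative to the paper your reduction machinery is logically redundant: you cite a special case of the very equation the paper cites in full, and then re-derive the rest. What your route buys is self-containment modulo only the classical $|L_n^{(0)}(\xi)|\leq\mathrm{e}^{\xi/2}$ (whose genuine difficulty\,---\,sign cancellation up to the turning point $\xi\approx 4n$, the failure of the triangle inequality, of Cauchy estimates on the generating function, and of the Szeg\H{o}-type energy function at $\xi=4n+2$\,---\,you diagnose accurately), plus the observation that the order-raising integral transfers the bound \emph{exactly}, explaining where the factor $\binom{n+\alpha}{n}$ comes from. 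What the paper's route buys is brevity: for its purposes (feeding the estimate \eqref{eq:radial_estimate} in the proof of Theorem \ref{thm:nfsglft_error}) the classical inequality is simply quoted as known. Since you ultimately cite the hard kernel of the statement from the same source the paper does, there is no gap, but you should be aware that your argument proves only the implication ``$\alpha=0$ case $\Rightarrow$ general case,'' not the lemma from scratch.
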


\begin{lemma}\label{thm:nfsglft_error_lem_1}
For\/ $\xi \geq \zeta > 0$, it is
\begin{equation*}
\ln \frac{\Gamma(\xi)}{\Gamma(\zeta)} \,\leq\, (\xi - \zeta) \, \ln \xi.
\end{equation*}
\end{lemma}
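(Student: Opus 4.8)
The plan is to reduce the inequality to a pointwise estimate for the digamma function $\psi \coloneqq \Gamma'/\Gamma$. Since $\Gamma$ is positive and continuously differentiable on $(0,\infty)$, the fundamental theorem of calculus yields
\[
\ln\frac{\Gamma(\xi)}{\Gamma(\zeta)} \,=\, \int_\zeta^\xi \psi(t)\,\mathrm{d}t .
\]
Because $\ln$ is increasing, it then suffices to prove the classical bound $\psi(t) \le \ln t$ for all $t > 0$: on the interval of integration one would have $\psi(t) \le \ln t \le \ln \xi$, whence $\int_\zeta^\xi \psi(t)\,\mathrm{d}t \le (\xi - \zeta)\ln\xi$, which is exactly the asserted inequality. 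The hypothesis $\xi \ge \zeta$ enters here twice: to make the integral well oriented, and to license the monotone bound $\ln t \le \ln\xi$ on $[\zeta,\xi]$.

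The only substantial step is therefore the inequality $\psi(t) \le \ln t$. I would establish it from Gauss's integral representation $\psi(t) = \int_0^\infty\big(\mathrm{e}^{-s}/s - \mathrm{e}^{-st}/(1-\mathrm{e}^{-s})\big)\,\mathrm{d}s$ together with the Frullani formula $\ln t = \int_0^\infty (\mathrm{e}^{-s}-\mathrm{e}^{-st})/s\,\mathrm{d}s$. Subtracting one from the other, the two $\mathrm{e}^{-s}/s$ contributions cancel and one is left with
\[
\ln t - \psi(t) \,=\, \int_0^\infty \mathrm{e}^{-st}\left(\frac{1}{1-\mathrm{e}^{-s}} - \frac{1}{s}\right)\mathrm{d}s .
\]
The integrand is nonnegative for every $s > 0$, because $\mathrm{e}^{-s} \ge 1 - s$ gives $s \ge 1 - \mathrm{e}^{-s}$ and hence $1/(1-\mathrm{e}^{-s}) \ge 1/s$; combined with $\mathrm{e}^{-st} > 0$ this forces $\ln t - \psi(t) \ge 0$. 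Alternatively, I could simply invoke $\psi(t) < \ln t$ as a standard property of the digamma function, since it is tabulated in the same reference already used for Lemma \ref{lem:laguerre_bound}.

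I expect the main (indeed essentially the only) obstacle to be the verification of $\psi(t) \le \ln t$; once this is in hand the rest is a one-line monotonicity argument. A route that keeps the bookkeeping minimal would instead apply the mean value theorem to $\ln\Gamma$, giving $\ln\Gamma(\xi)-\ln\Gamma(\zeta) = (\xi-\zeta)\,\psi(\eta)$ for some $\eta \in (\zeta,\xi)$, and then bound $\psi(\eta) \le \ln\eta \le \ln\xi$. This still requires the same pointwise inequality, so the integral computation above is the cleanest way to make the argument fully self-contained.
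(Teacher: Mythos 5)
Your proof is correct and follows essentially the same route as the paper: both arguments reduce the claim to the digamma bound $\psi(t)\le\ln t$, the paper via log-convexity of $\Gamma$ (yielding $\ln\Gamma(\xi)-\ln\Gamma(\zeta)\le(\xi-\zeta)\psi(\xi)$) and you via the fundamental theorem of calculus plus monotonicity of $\ln$. The only difference is that you additionally prove $\psi(t)\le\ln t$ from the Gauss and Frullani integral representations, whereas the paper simply cites it from Abramowitz--Stegun; your verification of that step is sound.
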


\begin{proof}
Due to the log convexity of the gamma function on $(0,\infty)$ (see \citep[Cor.\,1.2.6]{andrews_askey_roy}), we first have that $\ln\Gamma(\xi) - \ln\Gamma(\zeta) \leq (\xi - \zeta) \psi(\xi)$ with the digamma function $\psi \coloneqq \mathrm{d} \ln \Gamma / \mathrm{d} \xi$. The Lemma now follows from the estimate $\psi(\xi) \leq \ln \xi$, $\xi \in (0,\infty)$ (cf.\ \citep[Eq.\,6.3.21]{abramowitz_stegun}).
\end{proof}

\begin{lemma}[{\citet[Thm.\,1.5]{batir}}]\label{thm:nfsglft_error_lem_2}
For\/ $\xi > 1$, we have that
\begin{equation*}
\sqrt{2 \mathrm{e}} \left(\frac{\xi - 1/2}{\mathrm{e}}\right)^{\xi-1/2} \,\leq\, \Gamma(\xi). 
\end{equation*}
\end{lemma}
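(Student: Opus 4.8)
The plan is to pass to logarithms and reduce the claim to a monotonicity statement. Writing out the logarithm of the right-hand side, the asserted inequality $\sqrt{2\mathrm{e}}\,\big((\xi-1/2)/\mathrm{e}\big)^{\xi-1/2} \leq \Gamma(\xi)$ is equivalent to $g(\xi) \geq 0$ for $\xi > 1$, where
\[
g(\xi) \,\coloneqq\, \ln\Gamma(\xi) - \Big(\xi - \tfrac{1}{2}\Big)\ln\Big(\xi - \tfrac{1}{2}\Big) + \Big(\xi - \tfrac{1}{2}\Big) - \tfrac{1}{2}\ln 2 - \tfrac{1}{2}.
\]
A direct evaluation gives $g(1) = 0$ (using $\Gamma(1) = 1$ and $-\tfrac12\ln\tfrac12 = \tfrac12\ln 2$), so it suffices to show that $g$ is nondecreasing on $(1, \infty)$.

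Differentiating, and letting $\psi = \mathrm{d}\ln\Gamma/\mathrm{d}\xi$ denote the digamma function as in the proof of Lemma \ref{thm:nfsglft_error_lem_1}, the two linear-in-$\xi$ terms cancel and we obtain $g'(\xi) = \psi(\xi) - \ln(\xi - 1/2)$. Thus the whole statement rests on the digamma lower bound $\psi(\xi) \geq \ln(\xi - 1/2)$ for $\xi > 1$\,--\,the exact counterpart of the upper bound $\psi(\xi) \leq \ln\xi$ already exploited in Lemma \ref{thm:nfsglft_error_lem_1}.

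To establish this lower bound in a self-contained way, I would study $h(\xi) \coloneqq \psi(\xi) - \ln(\xi - 1/2)$ for $\xi > 1/2$ and use the recurrence $\psi(\xi+1) = \psi(\xi) + 1/\xi$, which yields
\[
h(\xi + 1) - h(\xi) \,=\, \frac{1}{\xi} - \ln\frac{\xi + 1/2}{\xi - 1/2}.
\]
Since $\ln\frac{\xi+1/2}{\xi-1/2} = 2\operatorname{artanh}\!\big(1/(2\xi)\big) = \frac{1}{\xi} + \frac{1}{12\xi^3} + \cdots > \frac{1}{\xi}$ for $\xi > 1/2$, this increment is strictly negative, so $h(\xi) > h(\xi+1) > h(\xi+2) > \cdots$. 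Combined with $h(\xi + n) \to 0$ as $n \to \infty$\,--\,which follows from the first-order asymptotics $\psi(y) = \ln y + O(1/y)$ and $\ln(y - 1/2) = \ln y + O(1/y)$\,--\,the sequence $\big(h(\xi + n)\big)_n$ strictly decreases to $0$, forcing $h(\xi) > 0$. Hence $\psi(\xi) > \ln(\xi - 1/2)$, so $g' > 0$ on $(1,\infty)$ and $g(\xi) \geq g(1) = 0$, which is the claim.

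The main obstacle is exactly the digamma lower bound $\psi(\xi) \geq \ln(\xi - 1/2)$; everything else is routine bookkeeping once it is in hand. One could alternatively invoke this sharp inequality as a known fact (it is classical, with the shift $-1/2$ best possible), but I prefer the telescoping argument above, whose only nontrivial inputs are the elementary estimate $2\operatorname{artanh}\!\big(1/(2\xi)\big) > 1/\xi$ and the leading asymptotics of $\psi$. The one point to handle carefully is that the \emph{artanh} expansion requires $1/(2\xi) < 1$, i.e.\ $\xi > 1/2$, which holds throughout the relevant range.
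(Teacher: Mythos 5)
Your proof is correct, and it is more than the paper offers: the paper does not prove this lemma at all, but imports it verbatim as \citep[Thm.\,1.5]{batir}, so your self-contained derivation is in effect a re-proof of the cited result. Every step checks out: the log-reformulation $g(\xi) \geq 0$ is the right equivalent statement, $g(1) = 0$ is verified correctly (and shows the constant $\sqrt{2\mathrm{e}}$ is attained at the endpoint, so the inequality is sharp there), the derivative computation $g'(\xi) = \psi(\xi) - \ln(\xi - 1/2)$ is right since the terms $\pm 1$ cancel, and the telescoping argument for $h(\xi) = \psi(\xi) - \ln(\xi - 1/2) > 0$ is sound: the increment $h(\xi+1) - h(\xi) = 1/\xi - 2\operatorname{artanh}(1/(2\xi))$ is strictly negative for $\xi > 1/2$ because all higher-order terms of the $\operatorname{artanh}$ series are positive, and the decreasing sequence $h(\xi + n)$ tends to $0$ by the first-order digamma asymptotics, forcing $h > 0$. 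This recurrence-plus-asymptotics device is in fact the standard technique for sharp digamma bounds (it is close in spirit to how such inequalities are established in the source the paper cites), and it dovetails nicely with Lemma \ref{thm:nfsglft_error_lem_1}, where the paper uses the companion upper bound $\psi(\xi) \leq \ln\xi$: your lower bound $\psi(\xi) > \ln(\xi - 1/2)$ is its exact counterpart. What the paper's citation buys is brevity; what your argument buys is that the error analysis of Section \ref{sec:error} becomes self-contained, resting only on elementary properties of $\psi$ ($\psi(\xi+1) = \psi(\xi) + 1/\xi$ and $\psi(y) = \ln y + O(1/y)$). One cosmetic remark: your argument actually proves the strict inequality on all of $(1,\infty)$ and validity of the non-strict form at $\xi = 1$, slightly more than the lemma states.
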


\begin{lemma}\label{thm:nfsglft_error_lem_3}
For\/ $\xi > 0$, it is\/ $\ln \xi \leq \xi^{1 / \mathrm{e}}$.
\end{lemma}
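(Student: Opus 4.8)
The plan is to establish the inequality by showing that the auxiliary function $g(\xi) \coloneqq \xi^{1/\mathrm{e}} - \ln\xi$ is nonnegative on $(0,\infty)$. The guiding observation is that the exponent $1/\mathrm{e}$ is exactly the value for which the bound becomes tight at a single interior point: at $\xi = \mathrm{e}^{\mathrm{e}}$ both sides equal $\mathrm{e}$, so one expects $\xi = \mathrm{e}^{\mathrm{e}}$ to be the global minimizer of $g$, with minimal value $0$.

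First I would compute $g'(\xi) = \frac{1}{\mathrm{e}}\xi^{1/\mathrm{e}-1} - \xi^{-1} = \xi^{-1}\big(\frac{1}{\mathrm{e}}\xi^{1/\mathrm{e}} - 1\big)$ and identify the unique critical point on $(0,\infty)$ as the solution of $\xi^{1/\mathrm{e}} = \mathrm{e}$, i.e.\ $\xi = \mathrm{e}^{\mathrm{e}}$. Since $\xi \mapsto \xi^{1/\mathrm{e}}$ is strictly increasing, the factor $\frac{1}{\mathrm{e}}\xi^{1/\mathrm{e}} - 1$ is negative for $\xi < \mathrm{e}^{\mathrm{e}}$ and positive for $\xi > \mathrm{e}^{\mathrm{e}}$; hence $g$ is strictly decreasing and then strictly increasing, so $\xi = \mathrm{e}^{\mathrm{e}}$ is the global minimum. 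Evaluating there gives $g(\mathrm{e}^{\mathrm{e}}) = (\mathrm{e}^{\mathrm{e}})^{1/\mathrm{e}} - \ln(\mathrm{e}^{\mathrm{e}}) = \mathrm{e} - \mathrm{e} = 0$, whence $g \geq 0$ throughout and the claim follows.

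An alternative---and to my taste cleaner---route is to substitute $\xi = t^{\mathrm{e}}$, which is a bijection of $(0,\infty)$ onto itself. Under it the desired inequality becomes $\mathrm{e}\ln t \leq t$, i.e.\ the standard bound $\ln t \leq t/\mathrm{e}$. This last estimate is immediate from the concavity of the logarithm: the tangent line to $\ln$ at $t = \mathrm{e}$ has slope $1/\mathrm{e}$ and passes through $(\mathrm{e}, 1)$, so $\ln t \leq 1 + \frac{1}{\mathrm{e}}(t - \mathrm{e}) = t/\mathrm{e}$ for every $t > 0$. The argument presents no genuine obstacle; it is a routine one-variable estimate. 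The only point that warrants a moment's care in the first route is verifying that the single critical point of $g$ is a \emph{global} rather than merely local minimum---equivalently, that $g'$ changes sign exactly once---which follows at once from the strict monotonicity of $\xi \mapsto \xi^{1/\mathrm{e}}$; the substitution route bypasses this entirely by reducing to the textbook tangent-line inequality.
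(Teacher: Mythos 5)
Your first argument is exactly the paper's proof spelled out in full: the paper simply states that $\xi^{1/\mathrm{e}} - \ln\xi$ attains its global extremum, namely zero, at $\xi = \mathrm{e}^{\mathrm{e}}$ (the paper writes ``maximum'' where it plainly means ``minimum''), and your derivative computation supplies the verification. Both your routes are correct; the tangent-line substitution is a pleasant alternative but not a substantively different method.
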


\begin{proof}
The function $\xi^{1 / \mathrm{e}} - \ln \xi$ has the global maximum zero, attained at $\mathrm{e}^\mathrm{e}$.
\end{proof}

\begin{lemma}\label{lem:legendre_bound_even}
The associated Legendre polynomials can be estimated as
\begin{equation*}
\sqrt{\frac{(l-m)!}{(l+m)!}} \, |P_{lm}(\xi)| \,\leq\, 1.
\end{equation*}
\end{lemma}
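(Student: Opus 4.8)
The plan is to avoid a direct pointwise attack on the associated Legendre polynomials and instead deduce the bound from the spherical-harmonic addition theorem, which collapses what looks like a delicate polynomial inequality into a one-line consequence of an orthonormality identity. The key observation is that the weight $\sqrt{(l-m)!/(l+m)!}$ appearing in the statement is, up to the $l$-dependent factor $\sqrt{(2l+1)/4\pi}$, exactly the normalization constant $Q_{lm}$ of the spherical harmonic $Y_{lm}$ introduced in Section \ref{sec:nfsglft}.

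First I would recall the addition theorem for spherical harmonics (see, e.\,g., \citep[Sect.\,1.6.2]{dai_xu}), which in the special case of coincident arguments reduces to
\[
\sum_{m = -l}^{l} |Y_{lm}(\vartheta, \varphi)|^2 \,=\, \frac{2l + 1}{4\pi}, \qquad (\vartheta, \varphi) \in [0, \pi] \times [0, 2\pi).
\]
Next I would substitute the explicit form $Y_{lm}(\vartheta, \varphi) = Q_{lm}\, P_{lm}(\cos\vartheta)\, \mathrm{e}^{\mathrm{i} m \varphi}$ with $Q_{lm} = \sqrt{\tfrac{2l+1}{4\pi}\tfrac{(l-m)!}{(l+m)!}}$. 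Since $|\mathrm{e}^{\mathrm{i} m \varphi}| = 1$, the $l$-dependent prefactor cancels on both sides and the identity becomes
\[
\sum_{m = -l}^{l} \frac{(l-m)!}{(l+m)!}\, P_{lm}(\cos\vartheta)^2 \,=\, 1.
\]
Finally, every summand on the left is nonnegative, so each individual term is bounded by the entire sum; in particular $\tfrac{(l-m)!}{(l+m)!}\, P_{lm}(\cos\vartheta)^2 \leq 1$. Taking square roots and writing $\xi = \cos\vartheta$ — which sweeps out all of $[-1,1]$ as $\vartheta$ ranges over $[0,\pi]$, the domain of $P_{lm}$ — yields the claim.

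The only point demanding care is that the addition-theorem sum runs over both positive and negative orders $m$, whereas the weight $\tfrac{(l-m)!}{(l+m)!}$ matches a positive-order Legendre polynomial; this is resolved automatically by the reflection identity $P_{l,-m} = (-1)^m \tfrac{(l-m)!}{(l+m)!}\, P_{lm}$, which makes the $\pm m$ contributions coincide, so the displayed identity holds consistently across the full index range $\{-l, \dots, l\}$. I do not expect any serious obstacle here: the work is entirely front-loaded into invoking the addition theorem. Were one instead to pursue a self-contained pointwise estimate of $P_{lm}$ via the Rodrigues formula or the recurrence \eqref{eq:Legendre_recurrence}, tracking the factorial normalization would be markedly more laborious, so the addition-theorem route is the clean choice.
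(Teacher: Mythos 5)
Your proof is correct, but it takes a genuinely different route from the paper. The paper's proof is essentially a pair of citations: for $m = 0$ it invokes the classical bound $|P_l(\xi)| \leq 1$ from \citep[Eq.\,3.2.2]{freeden_gervens_schreiner}, and for $|m| \geq 1$ it points to \citep[Eq.\,5]{lohoefer}, which gives a (sharper) pointwise estimate on the normalized associated Legendre functions. You instead derive the bound in a self-contained way from Uns\"old's identity $\sum_{m=-l}^{l} |Y_{lm}(\vartheta,\varphi)|^2 = (2l+1)/4\pi$, the coincident-argument case of the addition theorem, and then drop all but one nonnegative summand. Your argument has the advantage of treating all orders $m$, positive, negative, and zero, uniformly in a few lines, at the cost of importing the addition theorem as a black box; the paper's citations, particularly Lohöfer's, encode strictly stronger information (decay in $l$ and $m$) that is simply not needed here, so nothing is lost by your more elementary route. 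Your handling of the negative orders via the reflection identity is also consistent with the paper's convention, since the definition of $P_{lm}$ in Section \sref{sec:flt} already covers $m \in \{-l,\dots,l\}$ and the weighted quantities $\sqrt{(l-m)!/(l+m)!}\,|P_{lm}|$ for $\pm m$ coincide. The only cosmetic caveat is that the quantity $Q_{lm}$ you lean on is introduced in Section \sref{sec:nfsglft}, after the point where one would naturally prove this lemma, so in the paper's ordering you would want to state the normalization of $Y_{lm}$ explicitly rather than refer forward.
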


\begin{proof}
In the case $m = 0$, this follows from the estimate $|P_l(\xi)| \leq 1$ \citep[Eq.\,3.2.2]{freeden_gervens_schreiner}. If $|m| \geq 1$, see \citep[Eq.\,5]{lohoefer}.
\end{proof}

\begin{lemma}\label{lem:legendre_odd}
Let\/ $1 \leq |m| \leq l$. Then
\begin{equation}\label{eq:legendre_special_recursion}
\frac{P_{lm}(\xi)}{\sqrt{1-\xi^2}} \,=\, - \frac{1}{2m} \Big\lbrace P_{l+1, m+1}(\xi) + (l - m + 1) \, (l - m + 2) \, P_{l+1, m-1}(\xi) \Big\rbrace.
\end{equation}
\end{lemma}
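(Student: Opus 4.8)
The plan is to reduce the claimed identity to an elementary relation among derivatives of the ordinary Legendre polynomials, and then to verify that relation by differentiating the Legendre differential equation. Throughout I would first assume $1 \leq m \leq l$ and recover the case of negative order at the end. For $0 \leq m \leq l$ the definition of $P_{lm}$ gives the familiar factorization $P_{lm}(\xi) = (-1)^m (1-\xi^2)^{m/2} P_l^{(m)}(\xi)$, where $P_l = P_{l0}$ is the Legendre polynomial of degree $l$ and $P_l^{(m)}$ denotes its $m$-th derivative. Substituting this representation for each of the three functions in \eqref{eq:legendre_special_recursion} and cancelling the common factor $(-1)^{m-1}(1-\xi^2)^{(m-1)/2}$ together with the constant $-1/(2m)$, the assertion becomes equivalent to the purely polynomial identity
\begin{equation*}
2m \, P_l^{(m)}(\xi) \,=\, (1-\xi^2) \, P_{l+1}^{(m+1)}(\xi) \,+\, (l-m+1)(l-m+2) \, P_{l+1}^{(m-1)}(\xi). \tag{$\clubsuit$}
\end{equation*}
The factor $(1-\xi^2)$ multiplying $P_{l+1}^{(m+1)}$ appears precisely because $P_{l+1,m+1}$ carries the power $(1-\xi^2)^{(m+1)/2}$, one higher than the other two terms.

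Next I would eliminate the top-order derivative in $(\clubsuit)$ by means of the Legendre differential equation. Differentiating $(1-\xi^2)P_{l+1}'' - 2\xi P_{l+1}' + (l+1)(l+2)P_{l+1} = 0$ a total of $m-1$ times with the Leibniz rule yields
\begin{equation*}
(1-\xi^2) \, P_{l+1}^{(m+1)} \,=\, 2m\xi \, P_{l+1}^{(m)} \,-\, (l-m+2)(l+m+1) \, P_{l+1}^{(m-1)}.
\end{equation*}
Inserting this into the right-hand side of $(\clubsuit)$ and collecting the two $P_{l+1}^{(m-1)}$ contributions, their combined coefficient becomes $(l-m+2)\bigl[(l-m+1)-(l+m+1)\bigr] = -2m(l-m+2)$. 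After dividing by $2m \neq 0$, the identity $(\clubsuit)$ thus reduces to the first-order relation
\begin{equation*}
P_l^{(m)}(\xi) \,=\, \xi \, P_{l+1}^{(m)}(\xi) \,-\, (l-m+2) \, P_{l+1}^{(m-1)}(\xi). \tag{$\clubsuit'$}
\end{equation*}

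Finally I would establish $(\clubsuit')$ directly from the classical Legendre relation $\xi P_{l+1}'(\xi) - P_l'(\xi) = (l+1)P_{l+1}(\xi)$, that is $P_l' = \xi P_{l+1}' - (l+1)P_{l+1}$. Differentiating this $m-1$ times gives $P_l^{(m)} = \xi P_{l+1}^{(m)} + (m-1)P_{l+1}^{(m-1)} - (l+1)P_{l+1}^{(m-1)}$, and since $(m-1)-(l+1) = -(l-m+2)$ this is exactly $(\clubsuit')$. This chain proves the lemma for $1 \leq m \leq l$, with no special treatment needed at the endpoint $m = 1$ (each differentiation is carried out a nonnegative number of times). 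For $-l \leq m \leq -1$ I would invoke the symmetry relation $P_{l,-m} = (-1)^m \frac{(l-m)!}{(l+m)!} P_{lm}$, substitute it throughout \eqref{eq:legendre_special_recursion}, and check that the factorials together with the sign $-1/(2m)$ rearrange into the already-proven positive-order case. The routine parts are the Leibniz differentiations and the factorial bookkeeping of the negative-order reduction; the one point demanding genuine care is organizing the two auxiliary Legendre identities so that the two $P_{l+1}^{(m-1)}$ terms merge with exactly the coefficient $-2m(l-m+2)$, for it is this cancellation that produces the matching $2m$ on both sides. I expect this coefficient matching to be the crux, since a single sign or index slip there would leave a spurious residual term.
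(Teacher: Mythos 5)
Your proof is correct, and it takes a genuinely different route from the paper's. The paper derives \eqref{eq:legendre_special_recursion} by combining three textbook identities for the associated Legendre functions themselves (Edmonds' relations for $P_{lm}/\sqrt{1-\xi^2}$ and $(1-\xi^2)\,\mathrm{d}P_{lm}/\mathrm{d}\xi$, together with the Arfken--Weber derivative formula): it eliminates the derivative between two of them, shifts $l \mapsto l+1$, and substitutes the third; since those identities hold for all orders, both signs of $m$ are handled at once. You instead strip the problem down to the ordinary Legendre polynomials via the Rodrigues-type factorization $P_{lm} = (-1)^m(1-\xi^2)^{m/2}P_l^{(m)}$, reduce the claim to the polynomial identity $(\clubsuit)$, and verify it from the $(m-1)$-fold differentiated Legendre ODE plus the classical relation $P_l' = \xi P_{l+1}' - (l+1)P_{l+1}$; the coefficient bookkeeping you flag as the crux does check out (the differentiated ODE gives $(1-\xi^2)P_{l+1}^{(m+1)} = 2m\xi P_{l+1}^{(m)} - (l-m+2)(l+m+1)P_{l+1}^{(m-1)}$ since $(l+1)(l+2)-m(m-1)=(l-m+2)(l+m+1)$, and the two $P_{l+1}^{(m-1)}$ terms indeed merge with coefficient $-2m(l-m+2)$), and the negative-order case does follow from the parity relation $P_{l,-m} = (-1)^m\frac{(l-m)!}{(l+m)!}P_{lm}$ by the factorial rearrangement you describe. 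What your approach buys is self-containedness --- only the ODE and one first-order recursion for the ordinary Legendre polynomials are needed --- at the price of the Leibniz-rule bookkeeping and a separate reduction for $m<0$; the paper's version is shorter on the page but imports more cited machinery.
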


\begin{proof}
This known recursion formula can be derived from \citep[Eq.\,2.5.24]{edmonds}
\begin{equation}\label{eq:legendre_edmonds}
\frac{P_{lm}(\xi)}{\sqrt{1 - \xi^2}} \,=\, - \frac{1}{2 m \xi} \Big\lbrace P_{l, m+1}(\xi) + (l + m)(l - m + 1) P_{l,m-1}(\xi) \Big\rbrace
\end{equation}
and [\citeauthor{arfken_weber}, \citeyear{arfken_weber}, Eq.\,12.94; \citeauthor{edmonds}, \citeyear{edmonds}, Eq.\,2.5.25]
\begin{align}
\sqrt{1 - \xi^2} \, \frac{\mathrm{d} P_{lm}}{\mathrm{d}\xi} (\xi) \,&=\, - \frac{1}{2} \Big\lbrace P_{l, m+1}(\xi) - (l + m)(l - m + 1) P_{l, m-1}(\xi) \Big\rbrace,\label{eq:legendre_arfken_weber}\\
(1 - \xi^2) \, \frac{\mathrm{d}P_{lm}}{\mathrm{d}\xi} (\xi) \,&=\, (l + m) P_{l-1, m}(\xi) - l \xi P_{lm}(\xi).\label{eq:legendre_edmonds_2}
\end{align}
Indeed, from \eqref{eq:legendre_arfken_weber} and \eqref{eq:legendre_edmonds_2}, it follows by eliminating the derivative and with $l \mapsto l + 1$ that
\begin{equation}\label{eq:legendre_intermediate}
\frac{P_{lm}(\xi)}{\sqrt{1-\xi^2}} \,=\, - \frac{P_{l + 1, m + 1}(\xi)}{2(l + m + 1)} + \frac{l + 1}{l + m + 1} \frac{\xi P_{l + 1, m}(\xi)}{\sqrt{1 - \xi^2}} + \frac{l - m + 2}{2} P_{l + 1, m - 1}(\xi).
\end{equation}
Again with $l \mapsto l + 1$, it follows from \eqref{eq:legendre_edmonds} that
\begin{equation}\label{eq:legendre_intermediate_2}
\frac{\xi P_{l + 1, m}(\xi)}{\sqrt{1 - \xi^2}} \,=\, - \frac{1}{2m} \Big\lbrace P_{l + 1, m + 1}(\xi) + (l + m + 1)(l - m + 2) P_{l + 1, m - 1}(\xi) \Big\rbrace.
\end{equation}
Formula \eqref{eq:legendre_special_recursion} is now obtained by inserting \eqref{eq:legendre_intermediate_2} into \eqref{eq:legendre_intermediate} and simplifying.
\end{proof}

\begin{lemma}\label{lem:legendre_bound_odd}
For\/ $1 \leq |m| \leq l < n$, it is
\begin{equation*}
\sqrt{\frac{(l-m)!}{(l+m)!}} \, \frac{|P_{lm}(\xi)|}{\sqrt{1 - \xi^2}} \,\lesssim\, n.
\end{equation*}
\end{lemma}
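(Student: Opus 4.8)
The plan is to combine the recursion of Lemma~\ref{lem:legendre_odd} with the normalized pointwise bound of Lemma~\ref{lem:legendre_bound_even}. First I would observe that both the quantity $\sqrt{(l-m)!/(l+m)!}\,|P_{lm}(\xi)|$ and the claimed bound $\lesssim n$ are invariant under $m \mapsto -m$ (this follows from the standard identity relating $P_{l,-m}$ and $P_{lm}$), so it suffices to treat the case $1 \leq m \leq l$.

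For such $m$, I would apply \eqref{eq:legendre_special_recursion}, take absolute values, and use the triangle inequality to obtain
\begin{equation*}
\frac{|P_{lm}(\xi)|}{\sqrt{1-\xi^2}} \,\leq\, \frac{1}{2m}\Big\{|P_{l+1,m+1}(\xi)| + (l-m+1)(l-m+2)\,|P_{l+1,m-1}(\xi)|\Big\}.
\end{equation*}
Multiplying through by $\sqrt{(l-m)!/(l+m)!}$, the aim is to recognize in each summand the correctly normalized associated Legendre function of degree $l+1$, namely $\sqrt{((l+1)-(m\pm 1))!/((l+1)+(m\pm 1))!}\,|P_{l+1,m\pm 1}(\xi)|$, each of which is $\leq 1$ by Lemma~\ref{lem:legendre_bound_even} (the order $m-1 \geq 0$ is admissible, the case $m = 1$ reducing to the ordinary Legendre polynomial).

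The main bookkeeping step is then to track the mismatch between the normalization $\sqrt{(l-m)!/(l+m)!}$ and the natural normalizations $\sqrt{(l-m)!/(l+m+2)!}$ and $\sqrt{(l-m+2)!/(l+m)!}$ of the two summands. I expect the factorial ratios to telescope cleanly, producing a factor $\sqrt{(l+m+1)(l+m+2)}$ for the $P_{l+1,m+1}$ term and $1/\sqrt{(l-m+1)(l-m+2)}$ for the $P_{l+1,m-1}$ term. The prefactor $(l-m+1)(l-m+2)$ on the latter then cancels against this reciprocal square root, leaving $\sqrt{(l-m+1)(l-m+2)}$. This careful matching of factorials is the only delicate part of the argument.

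Finally I would bound the two resulting square roots using $1 \leq m \leq l < n$: here $l+m+1, l+m+2 \leq 2n$ and $l-m+1, l-m+2 \leq n$, so both are $\mathcal{O}(n)$, and together with the factor $1/(2m) \leq 1/2$ this yields the claimed estimate $\lesssim n$.
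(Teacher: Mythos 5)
Your proposal is correct and follows essentially the same route as the paper: apply the recursion of Lemma~\ref{lem:legendre_odd} with the triangle inequality, absorb the normalization into the correctly normalized Legendre functions of degree $l+1$ (producing exactly the factors $\sqrt{(l+m+1)(l+m+2)}$ and $\sqrt{(l-m+1)(l-m+2)}$ you identify), and bound these via Lemma~\ref{lem:legendre_bound_even}. The only cosmetic differences are that the paper handles both signs of $m$ directly through the $1/(2|m|)$ prefactor rather than reducing to $m\geq 1$ by parity, and closes with the arithmetic--geometric mean inequality to get the explicit bound $(2l+3)/(2|m|)$ instead of your direct $\mathcal{O}(n)$ estimates.
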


\begin{proof}
With the triangle inequality it follows from Lemmata \ref{lem:legendre_bound_even} and \ref{lem:legendre_odd} that
\begin{align*}
\sqrt{\frac{(l-m)!}{(l+m)!}} \, \frac{|P_{lm}(\xi)|}{\sqrt{1 - \xi^2}} 
\,&\leq\, \frac{1}{2|m|} \, \sqrt{\frac{(l-m)!}{(l+m)!}} \, \Big\lbrace |P_{l+1,m+1}(\xi)| + (l - m + 1) (l - m + 2) |P_{l+1,m-1}(\xi)| \Big\rbrace\vphantom{\sqrt{\frac{!}{!}}}\\
%
%
\,&\leq\, \frac{1}{2|m|} \left( \sqrt{(l+m+1)(l+m+2)} + \sqrt{(l-m+1)(l-m+2)} \, \right)\\
\,&\leq\, \frac{1}{2|m|} \, (2l + 3),
\end{align*}
where the last estimate is due to the inequality of the arithmetic and the geometric mean. 
\end{proof}

\noindent
\textit{Proof of Theorem \ref{thm:nfsglft_error}}. In order to apply Theorem \ref{thm:nfft_error}, we investigate in Theorem \ref{thm:nfsglft} the impact of the factors on the right-hand side of the NDFT matrix $N$ on the $1$-norm of the input vector. For the block matrices, we can focus on the single blocks.

Firstly, we note that a multiplication of a vector $x$ by a permutation matrix $P$ has no impact on the $1$-norm, \textit{i.\,e.}, $\|Px\|_1 = \|x\|_1$. The same holds true for the matrices $A_B$, $W_m$, and $Z_B$. From Lemma \ref{lem:dct_1_norm}, it immediately follows that $\|\tilde{C}_{2B} x\|_1 \leq \sqrt{2B} \|x\|_1$ for all vectors $x$ of length $2B$. Furthermore, it is $\|D_{2B}x\|_1 \leq \|x\|_1 / \sqrt{B}$.
Due to Lemma \ref{lem:laguerre_bound} and the fact that $\rho (1 + \cos\omega_j) / 2 \leq \rho$, we have for the radial part of the SGL basis functions that
\begin{equation}\label{eq:radial_estimate}
\left|N_{nl} R_{nl} \left(\frac{\rho}{2}(1 + \cos\omega_j) \right)\right|^2
\leq\, \frac{2(n-l-1)!}{\Gamma(n + 1/2)} {n - 1/2 \choose n - l - 1}^2 \rho^{2l} \, \mathrm{e}^{\rho^2}
=\, \frac{2 \Gamma(n + 1/2) \rho^{2l} \mathrm{e}^{\rho^{2}}}{\Gamma(n-l) \Gamma(l + 3/2)^2}.
\end{equation}
In view of the right-hand side, we show that
\begin{align}
\ln \frac{\Gamma(n + 1/2) \rho^{2l}}{\Gamma(n-l) \Gamma(l + 3/2)^2}
\,&=\, \ln \frac{\Gamma(n+1/2)}{\Gamma(n-l)} + l\ln\rho^2 - 2 \ln \Gamma(l + 3/2)\vphantom{\left(\frac{2}{2}\right)^2}\notag\\
&\leq\, (l + 1/2) \ln(n + 1/2) + l\ln\rho^2 - 2\ln\left((l + 1)^{l+1} \mathrm{e}^{-l-1}\right) - \ln 2 - 1 \vphantom{\frac{\Gamma}{\Gamma}} \label{eq:nfsglft_error_many_logarithms}\\
\,&\leq\, (l + 1) \ln\Big((\rho\mathrm{e})^2 (n + 1/2) (l + 1)^{-2}\Big) - 2 \ln\rho - \ln 2 - 1 \vphantom{\frac{\Gamma}{\Gamma}} \notag\\
\,&\leq\, (\rho \mathrm{e})^{2/\mathrm{e}} \, (n + 1/2)^{1 / \mathrm{e}} \, (l + 1)^{1 - 2/\mathrm{e}} - 2 \ln\rho - \ln 2 - 1 \vphantom{\frac{\Gamma}{\Gamma}} \label{eq:nfsglft_error_many_logarithms_2}\\
\,&\leq\, (\rho \mathrm{e})^{2 / \mathrm{e}} \, (B + 1/2)^{1 - 1/\mathrm{e}} - 2 \ln\rho - \ln 2 - 1. \vphantom{\frac{\Gamma}{\Gamma}}\label{eq:nfsglft_error_many_logarithms_3}
\end{align}
Here, the estimate \eqref{eq:nfsglft_error_many_logarithms} follows from the Lemmata \ref{thm:nfsglft_error_lem_1} and \ref{thm:nfsglft_error_lem_2}, while the inequality \eqref{eq:nfsglft_error_many_logarithms_2} is due to the Lemma \ref{thm:nfsglft_error_lem_3}. For $\rho \ll 1$, the estimate \eqref{eq:nfsglft_error_many_logarithms_3} can be improved: Omitting in \eqref{eq:nfsglft_error_many_logarithms} the summand $l \ln \rho^2 \leq 0$, we obtain similarly as above 
\begin{equation}
\ln \frac{\Gamma(n+1/2) \rho^{2l}}{\Gamma(n-l) \Gamma(l + 3/2)^2} \,\leq\, \mathrm{e}^{2 / \mathrm{e}} \, (B + 1/2)^{1 - 1/\mathrm{e}} - \ln 2 - 1.\label{eq:nfsglft_error_many_logarithms_4}
\end{equation}
A comparison of the right-hand side of \eqref{eq:nfsglft_error_many_logarithms_3} with \eqref{eq:nfsglft_error_many_logarithms_4} then shows that
\begin{equation*}
\mathrm{e}^{2 / \mathrm{e}} \, (B + 1/2)^{1 - 1/\mathrm{e}} \,\leq\, (\rho \mathrm{e})^{2 / \mathrm{e}} \, (B + 1/2)^{1 - 1/\mathrm{e}} - 2 \ln \rho
\end{equation*}
if and only if $B \leq \Omega(\rho)$ with $\Omega$ as stated in the theorem.
By \eqref{eq:radial_estimate}, \eqref{eq:nfsglft_error_many_logarithms_3}, and \eqref{eq:nfsglft_error_many_logarithms_4}, for fixed $l < B$ and an arbitrary vector $x = [\xi_0, \dots, \xi_{B-l-1}]$ of length $B - l$, the elements of the transformed vector $\tilde{R}_l^\textnormal{T} x$ can hence be estimated as
\begin{align*}
\left| \sum_{n = l+1}^B \xi_{n-l-1} \, N_{nl} \, R_{nl} \left(\frac{\rho}{2}(1 + \cos\omega_j) \right)\right|
\,&\leq\, a_{\rho, B} \, \exp\bigg(b_{\rho, B} \left(B + \frac{1}{2}\right)^{1 - 1 / \mathrm{e}} + \frac{\rho^2}{2} - \frac{1}{2}\bigg) \, \|x\|_1,
\end{align*}
with the coefficients $a_{\rho, B}$ and $b_{\rho, B}$ stated in the theorem. We thus have for a vector $x$ of appropriate length that
\begin{equation}\label{eq:nfsglft_error_R}
\|\tilde{R}_l^\textnormal{T} x \|_1 \,\lesssim\, B \, a_{\rho, B} \, \exp\bigg(b_{\rho, B} \left(B + \frac{1}{2}\right)^{1 - 1 / \mathrm{e}} + \frac{\rho^2}{2} - \frac{1}{2}\bigg) \, \|x\|_1, \quad\quad l < B.
\end{equation}

Let now $m$ be odd with $|m| < B$, and $x = [\xi_0, \dots, \xi_{B-|m|}]$ be a vector of length $B - |m|$. With Lemma \ref{lem:legendre_bound_even}, we can estimate the elements of the transformed vector $\tilde{L}_m^\textnormal{T} x$ as
\begin{align*}
\Bigg| \sum_{l = |m|}^{B-1} \xi_{l-|m|} \, Q_{lm} \, P_{lm}(\cos\omega_j) \Bigg|
\,\lesssim\, \sqrt{B} \, \|x\|_1.
\end{align*}
When $m$ is odd, on the other hand, it follows from Lemma \ref{lem:legendre_bound_odd} that
\begin{align*}
\Bigg| \sum_{l = |m|}^{B-1} \xi_{l-|m|} \, (\sin\omega_j)^{-1} \, Q_{lm} \, P_{lm}(\cos\omega_j) \Bigg|
\,\lesssim\, B^{3 / 2} \, \|x\|_1.
\end{align*}
We thus have for a vector $x$ of appropriate length the estimate
\begin{equation}\label{eq:nfsglft_error_L}
\|\tilde{L}_m^\textnormal{T} x\|_1 \,\lesssim\, B^{5 / 2} \, \|x\|_1, \quad\quad |m| < B.
\end{equation}
With the estimates \eqref{eq:nfsglft_error_R} and \eqref{eq:nfsglft_error_L}, we can now apply the second part of Theorem \ref{thm:nfft_error}.\hfill$\Box$\newline

Of course, Theorem \ref{thm:nfsglft_error} suggests that the error $E_\infty$ grows when the bandwidth $B$ is increased. The same applies to increasing parameter $\rho$, which motivates in Section \ref{sec:nfsglft} the choice
\begin{equation}\label{eq:nfsglft_rho}
\rho \,=\, \rho(x_0,\dots,x_{M-1}) \,\coloneqq\, \max \lbrace r_0, \dots, r_{M-1} \rbrace.
\end{equation}
However, when the bandwidth $B$ and the parameter $\rho$ are fixed, we can make the error \emph{arbitrarily small} (in exact arithmetics) by choosing the cutoff parameter $q$ of the NFFT sufficiently large. More specifically, we observe that the maximum absolute error $E_\infty$ of the NFSGLFT\vskip0.5\baselineskip
\begin{enumerate}[label=(O\arabic*), align=left]\setlength{\itemsep}{4pt}
\item decays at least \emph{exponentially} with increasing cutoff parameter $q$,\label{itm:O1}
\item grows potentially \emph{hyperexponentially} with increasing parameter $\rho$,\label{itm:O2}
\item grows at most \emph{subexponentially} with increasing bandwidth $B$.\label{itm:O3} 
\end{enumerate}\vskip0.5\baselineskip
We validate these observations numerically in the upcoming Section \ref{sec:numerical_experiments}. In particular, (iii) implies that in order to control the error when the bandwidth $B$ is increasing, it suffices to appropriately choose $\sigma = \sigma(B) = \textnormal{constant}$ and $q = q(B) = \mathrm{o}(B)$. This shows that the NFSGLFT derived in Section \ref{sec:nfsglft} are truly fast algorithms from the theoretical viewpoint (cf.\ Eqs.\,\ref{eq:Phi_Psi}).

\section{Numerical results}
\label{sec:numerical_experiments}
The NFSGLFT, its adjoint, as well as the iNFSGLFT were implemented in the C++ programming language. We employed the Clenshaw-Smith algorithm and its adjoint (Sect.\,\ref{sec:clenshaw}) in the radial subtransform, and the semi-naive FLT and its adjoint (Sect.\,\ref{sec:flt}) from the software package SpharmonicKit\footnote{\url{http://www.cs.dartmouth.edu/~geelong/sphere}} (Vers.\,2.7) in the spherical subtransform, respectively. The Clenshaw-Smith algorithm and its adjoint were performed in extended double precision (\texttt{long double}, 80\,bits in total, 64\,bits mantissa). The other computations were performed in standard double precision. We used the implementation of the NFFT and its adjoint of \citet{keiner_kunis_potts}\footnote{\url{https://www-user.tu-chemnitz.de/~potts/nfft}} (Vers.\,3.3.1) and employed the DCT of the Fastest Fourier Transform in the West\footnote{\url{http://www.fftw.org}} (FFTW, Vers.\,3.3.6). All test runs were performed on an x86-64 Unix system with a 3.40\,GHz Intel Core i7-3770 CPU. We chose the oversampling factor $\sigma = 2$ in the NFFT. The parameter $\rho$ of the NFSGLFT and its adjoint were set as in \eqref{eq:nfsglft_rho}. For runtime and error comparison, a naive NDSGLFT was implemented. This naive NDSGLFT was realized by directly evaluating the function of interest at the given points, using the implementation of the generalized Laguerre polynomials, associated Legendre polynomials, etc.\ of the GNU Scientific Library\footnote{\url{https://www.gnu.org/software/gsl}} (GSL, Vers.\,2.3). In the following, it is
\begin{equation*}
\mathbb{B}^3_\kappa \,\coloneqq\, \lbrace x \in \mathbb{R}^3 : \|x\|_2 \leq \kappa \rbrace, \quad\quad \kappa > 0.
\end{equation*}

\begin{remark}
Reviewing the derivation of the NFSGLFT in Section \ref{sec:nfsglft} carefully, it becomes apparent that not the entire grid $I^3_{4B}$ of size $4B \times 4B \times 4B$ is actually required in \eqref{eq:nfsglft_nfft}. Since the implementation of the NFFT of \citeauthor{keiner_kunis_potts}\ allows for using grids with a different extent in each direction, we used a smaller grid of size $4B \times 2B \times 2B$ in the test runs. This generally improves the runtime, but has no impact on the asymptotic complexity.
\end{remark}

\begin{figure}[t]
\centering
\includegraphics[width=0.95\textwidth]{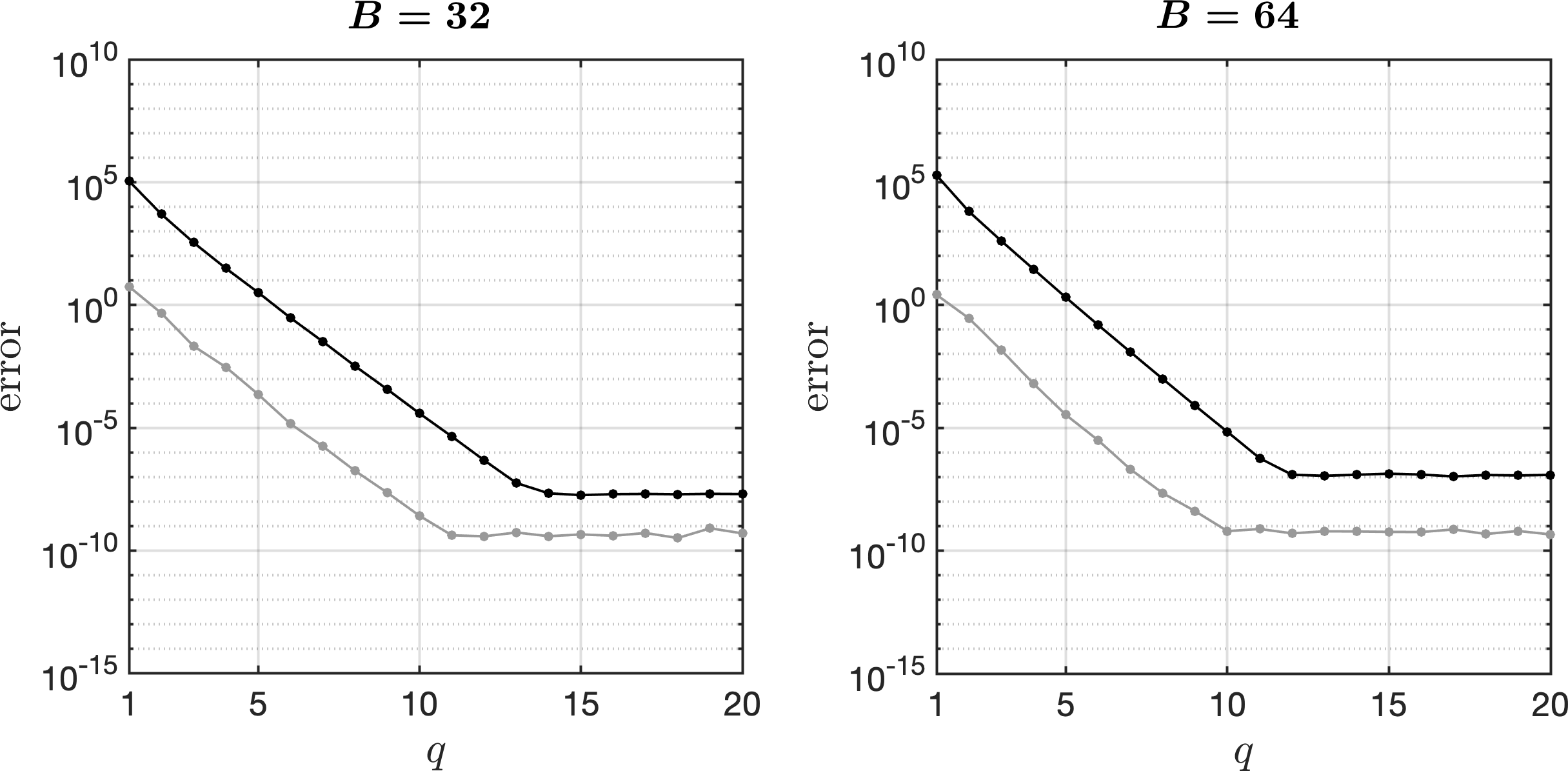} 
\caption{Average maximum \textbf{(black)} absolute and \textbf{(gray)} relative error of the NFSGLFT with $M = 10\,000$ uniformly distributed points in $\mathbb{B}^3_5$ and $\sigma = 2$ \textit{vs.}\ the cutoff parameter $q$ of the NFFT.}\label{fig:nfsglft_q}
\end{figure}

In a first test, random SGL Fourier coefficients $\hat{f}_{nlm}$ were generated for the bandwidths $B = 32$ and $B = 64$, respectively, the real and imaginary part both being uniformly distributed between $-1$ and $1$. In addition, $M = 10\,000$ uniformly distributed random points $x_i \in \mathbb{B}^3_5$ were generated each. The corresponding function values $f(x_i)$ were then computed using the exact naive NDSGLFT, while the NFSGLFT was used to compute approximative function values $\tilde{f}(x_i)$. This was done for the cutoff parameters $q = 1, \dots, 20$ of the NFFT. The above test was repeated ten times to determine the unweighted average maximum absolute and relative error of the NFSGLFT, these errors being defined respectively as (cf.\ Eq.\,\ref{eq:nfsglft_error_def})
\begin{equation*}
\max_{i = 0, \dots, M-1} |f(x_i) - \tilde{f}(x_i)|
\quad \textnormal{and} \quad
\max_{i = 0, \dots, M-1} \frac{|f(x_i)-\tilde{f}(x_i)|}{|f(x_i)|}.
\end{equation*}
Figure \ref{fig:nfsglft_q} shows the results of this error measurement (the standard deviation of the results was generally so small, that it was not drawn into the plot for better visual perception). It can clearly be seen that the error decays exponentially w.r.t.\ the cutoff parameter $q$ (cf.\ Sec.\,\ref{sec:error}, \hyperref[itm:O1]{O1}), until the roundoff error takes over. In both cases $B = 32$ and $B = 64$, the relative error for the power of two $q = 16$ is on the order of only $10^{-10}$, which is satisfyingly small, taking into account the machine accuracy.

In a second test, random SGL Fourier coefficients $\hat{f}_{nlm}$ were generated as described above for the bandwidths $B = 32$ and $B = 64$, respectively. In addition, for each $k = 1, \dots, 6$, a total of $M = 10^k$ uniformly distributed random points $x_i \in \mathbb{B}^3_5$ was generated. The corresponding function values $f(x_i)$ were then computed with the naive NDSGLFT as well as the NFSGLFT. Table \ref{tab:nfsglft_runtime} shows the results of the runtime measurement performed in this test. It is clearly visible that the NFSGLFT can offer a significant runtime advantage over the naive algorithm even for small problem sizes ($M \geq 100$). With increasing $M$, the NFSGLFT improves even further. In the case $B = 32$ and $M = 1\,000\,000$, the naive NDSGLFT required approximately three hours of computation time, the runtime of the NFSGLFT was less than two minutes.

\begin{table}[t]
\begin{subtable}[t]{0.45\textwidth}
\begin{center}
\begin{tabular}[t]{ccccccc}
$M$ & naive NDSGLFT & NFSGLFT\\\hline
& &\\[-11pt]   
1\,\small{E}\,\normalsize{$+$1} \,&\, 9.99\,\small{E}\,\normalsize{$-$2}\,s \,&\, 9.27\,\small{E}\,\normalsize{$-$}1\,s\\[2pt]
1\,\small{E}\,\normalsize{$+$2} \,&\, 9.95\,\small{E}\,\normalsize{$-$1}\,s \,&\, 9.35\,\small{E}\,\normalsize{$-$}1\,s\\[2pt]
1\,\small{E}\,\normalsize{$+$3} \,&\, 9.81\,\small{E}\,\normalsize{$+$0}\,s \,&\, 1.03\,\small{E}\,\normalsize{$+$}0\,s\\[2pt]
1\,\small{E}\,\normalsize{$+$4} \,&\, 1.04\,\small{E}\,\normalsize{$+$2}\,s \,&\, 2.10\,\small{E}\,\normalsize{$+$}0\,s\\[2pt]
1\,\small{E}\,\normalsize{$+$5} \,&\, 1.09\,\small{E}\,\normalsize{$+$3}\,s \,&\, 1.22\,\small{E}\,\normalsize{$+$}1\,s\\[2pt]
1\,\small{E}\,\normalsize{$+$6} \,&\, 1.11\,\small{E}\,\normalsize{$+$4}\,s \,&\, 1.13\,\small{E}\,\normalsize{$+$}2\,s\\\hline
\end{tabular}
\end{center}
\end{subtable}\quad\quad
\begin{subtable}[t]{0.45\textwidth}
\begin{center}
\begin{tabular}[t]{ccccccc}
$M$ & naive NDSGLFT & NFSGLFT\\\hline
& &\\[-11pt]   
1\,\small{E}\,\normalsize{$+$1} \,&\, 1.36\,\small{E}\,\normalsize{$+$0}\,s \,&\, 1.01\,\small{E}\,\normalsize{$+$1}\,s\\[2pt]
1\,\small{E}\,\normalsize{$+$2} \,&\, 1.38\,\small{E}\,\normalsize{$+$1}\,s \,&\, 1.03\,\small{E}\,\normalsize{$+$1}\,s\\[2pt]
1\,\small{E}\,\normalsize{$+$3} \,&\, 1.36\,\small{E}\,\normalsize{$+$2}\,s \,&\, 1.03\,\small{E}\,\normalsize{$+$1}\,s\\[2pt]
1\,\small{E}\,\normalsize{$+$4} \,&\, 1.36\,\small{E}\,\normalsize{$+$3}\,s \,&\, 1.19\,\small{E}\,\normalsize{$+$1}\,s\\[2pt]
1\,\small{E}\,\normalsize{$+$5} \,&\, 1.37\,\small{E}\,\normalsize{$+$4}\,s \,&\, 2.82\,\small{E}\,\normalsize{$+$1}\,s\\\hline
\end{tabular}
\end{center}
\end{subtable}\vskip\baselineskip
\caption{Average runtime of the naive NDSGLFT and the NFSGLFT for \textbf{(left)} $B = 32$ and \textbf{(right)} $B = 64$, each with $\sigma = 2$ and $q = 16$, \textit{vs.}\ the total number $M$ of target points.}\label{tab:nfsglft_runtime}
\end{table}

In the next test, the error of the NFSGLFT w.r.t.\ the spreading width of the target points was measured. For this, random SGL Fourier coefficients $\hat{f}_{nlm}$ were generated for the bandwidth $B = 32$ as described above. Furthermore, $M = 1000$ uniformly distributed random points $x_i \in \mathbb{B}^3_\kappa$ were generated, where $\kappa$ was iterated over the values  $i / 4$, $i = 1, \dots, 31$. Note that due to \eqref{eq:nfsglft_rho}, it is $\rho \approx \kappa$ here, as can be seen easily. The above test was performed ten times in order to determine the average maximum absolute and relative error of the NFSGLFT w.r.t.\ $\kappa$. The results are depicted in Figure \ref{fig:nfsglft_scattering}. The conjectured hyperexponential error growth w.r.t.\ the spreading width $\rho$ can clearly be observed (cf.\ Sec.\,\ref{sec:error}, \hyperref[itm:O2]{O2}). The standard deviation of the maximum absolute error was again so small that it was not drawn into Figure \ref{fig:nfsglft_scattering}.

\begin{figure}[t]
\centering
\includegraphics[width=\textwidth]{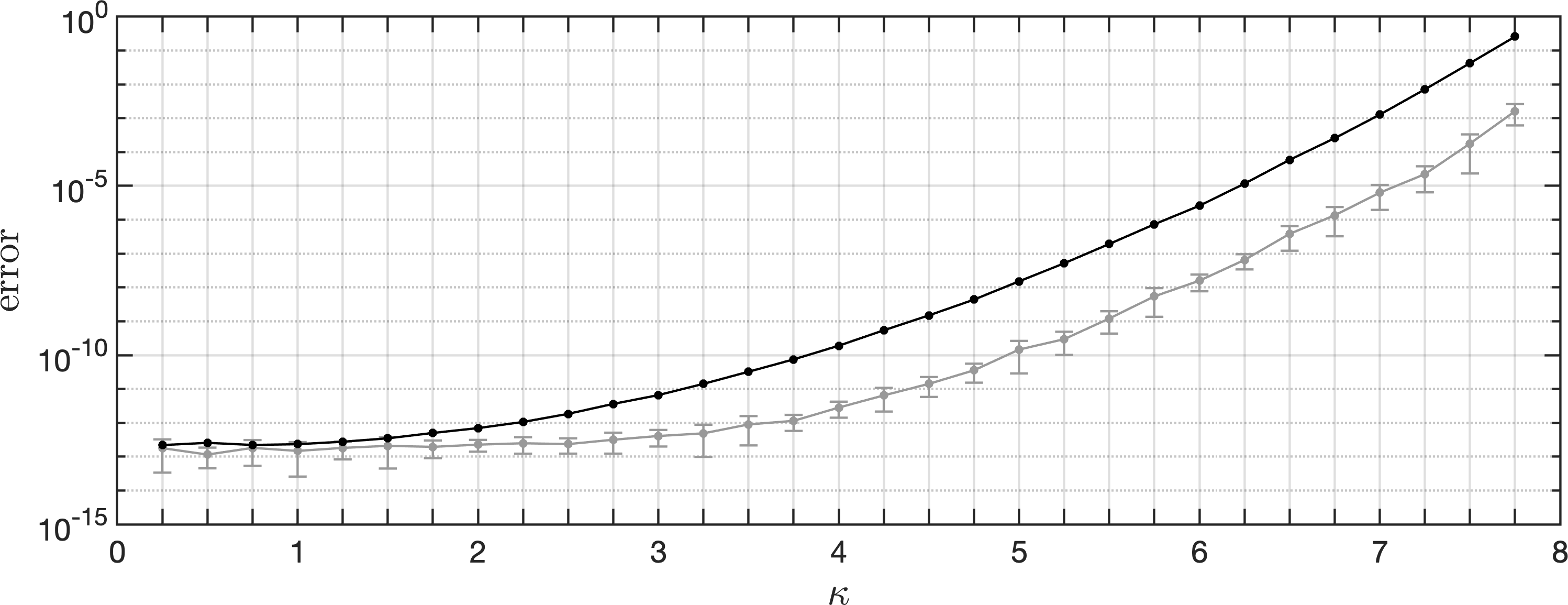}
\caption{Average maximum \textbf{(black)} absolute and \textbf{(gray)} relative error of the NFSGLFT for $B = 32$ and $M = 1000$ uniformly distributed random points in $\mathbb{B}^3_\kappa$ with $\sigma = 2$ and $q = 16$ \textit{vs.}\ $\kappa$ ($\approx \rho$).}
\label{fig:nfsglft_scattering}
\end{figure}

In a fourth test, the error of the NFSGLFT w.r.t.\ the bandwidth $B$ was investigated. To do so, for the bandwidths $B = 2^k$, $k = 3, \dots, 7$, random SGL Fourier coefficients $\hat{f}_{nlm}$ were generated as above. In addition, $M = 1000$ uniformly distributed random points $x_i \in \mathbb{B}^3_5$ were generated. The cutoff parameter of the NFFT was set to $q = 12$. In the actual test run, the function values $f(x_i)$ were computed from the SGL Fourier coefficients with the naive NDSGLFT as well as the NFSGLFT. This test was performed ten times. Figure \ref{fig:nfsglft_B} shows the results of the error measurement. One can see the subexponential growth of the error (cf.\ Sec.\,\ref{sec:error}, \hyperref[itm:O3]{O3}). Surprisingly, the absolute error decays from the bandwidth $B = 32$ to the bandwidth $B = 64$. By repeating the above test with the exact NDFT instead of the approximating NFFT, it became apparent that the reason for this reproducible effect was indeed the NFFT (cf.\ Fig.\,\ref{fig:nfsglft_B}).

\begin{figure}[t]
 \centering
 \includegraphics[width=\textwidth]{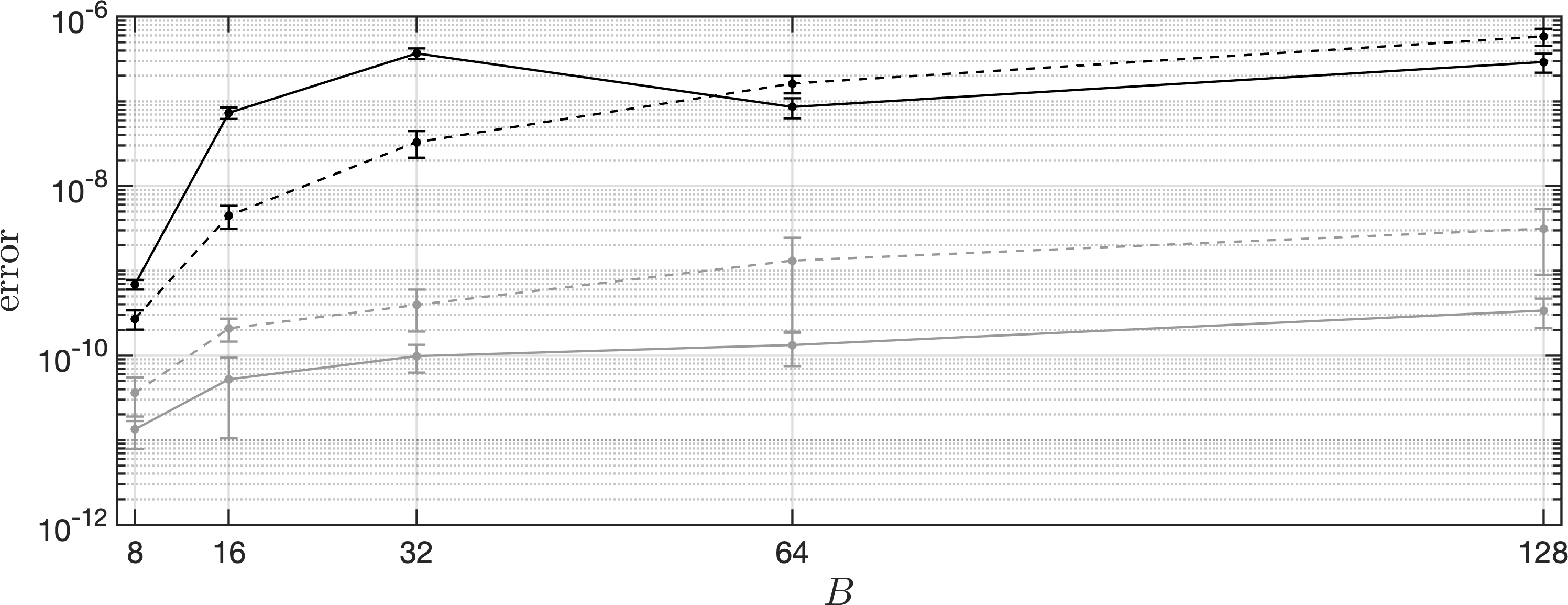}
 \caption{Average maximum \textbf{(black)} absolute and \textbf{(gray)} relative error of the NFSGLFT with $M = 1000$ uniformly distributed random points in $\mathbb{B}^3_5$ and $\sigma = 2$ and $q = 12$ \textit{vs.}\ the bandwidth $B$. The dashed lines show the results of the test repeated with the NDFT instead of the NFFT.}
 \label{fig:nfsglft_B}
\end{figure}

In a last test, the performance of the iNFSGLFT was examined. We used the Cartesian grids
\begin{equation*}
G_N \,\coloneqq\, \left\lbrace \kappa \cdot \left[\frac{2 j}{N} - 1, \frac{2 k}{N} - 1, \frac{2 l}{N} - 1 \right] \in \mathbb{R}^3 : j, k, l = 0, \dots, N - 1 \right\rbrace, \quad\quad \kappa > 0, ~ N \in \mathbb{N}.
\end{equation*}
It is here $M = N^3$. For the bandwidths $B = 8$ ($q = 15$) and $B = 16$ ($q = 16$), and with $\kappa = 5$, the cases $N = 25, 50, 100$ were considered. Further, for $N = 50$, the cases $\kappa = 6, 8, 10$ were investigated. Since in all above cases the number $M$ of target points is larger than the number of SGL Fourier coefficients, the iNFSGLFT was realized as a CGNR method (cf.\ Sect.\,\ref{sec:nfft}). Within this CGNR algorithm, the coefficients $\alpha_k$ and $\beta_k$ (see \citep[Alg.\,10.4.1]{golub_van_loan}) were computed using extended double precision. In the actual test runs, random SGL Fourier coefficients $\hat{f}_{nlm}$ were generated as above. The corresponding function values $f(x_i)$ were then computed with the exact naive NDSGLFT. From these data, SGL Fourier coefficients $\hat{f}^\circ_{nlm}$ were reconstructed with the iNFSGLFT. As the initial guess for the SGL Fourier coefficients required in the CGNR technique, the \emph{mid-point rule} 
\begin{equation*}
\hat{f}_{nlm} \,\approx\, 8 N^{-1} \sum\nolimits_{x_i \in G_N} f(x_i) \, \overline{H_{nlm}(x_i)} \, \mathrm{e}^{-\|x_i\|_2^2}
\end{equation*}
for numerical integration was used. After each iteration of the CGNR algorithm, the maximum absolute and relative error of the iNFSGLFT,
\begin{equation*}
\max_{|m| \leq l < n \leq B} |\hat{f}_{nlm} - \hat{f}_{nlm}^\circ|
\quad \textnormal{and} \quad
\max_{|m| \leq l < n \leq B} \frac{|\hat{f}_{nlm} - \hat{f}_{nlm}^\circ|}{|\hat{f}_{nlm}|},
\end{equation*}
respectively, were measured. The results are shown in Figures \ref{fig:infsglft_N} and \ref{fig:infsglft_gamma}. For $B = 8$, $\kappa = 5$ (Fig.\,\ref{fig:infsglft_N}, left), a small error was achieved for all considered values of $N$, though many iterations were necessary for this. Generally, except for in the cases  $B = 8$, $\kappa = 8, 10$ (Fig.\,\ref{fig:infsglft_gamma}, left), it can be observed that even after ten thousand iterations convergence was not attained in the iNFSGLFT. For $B = 16$, $\kappa = 5$ (Fig.\,\ref{fig:infsglft_N}, right), decay of the absolute error can be seen at the end of the test run, a relative error of less than one was not achieved, however; this can be attributed partially to the fact that this was not the case in the initial guess of the SGL Fourier coefficients, either. Overall, it seems that the convergence behavior is not influenced much by the number $M$ of given function values $f(x_i)$, provided that it is possible to reconstruct the SGL Fourier coefficients $\hat{f}_{nlm}$ from these scattered data. The spreading width of the function values $f(x_i)$ appears to have a much greater impact (Fig.\,\ref{fig:infsglft_gamma}). This became apparent already in the initial guess for the SGL Fourier coefficients $\hat{f}_{nlm}$; both for $B = 8$ and $B = 16$, the initial guess was significantly better for $\kappa = 8$ than for $\kappa = 6$ and $\kappa = 10$, for $\kappa = 10$ it was better than for $\kappa = 6$. Interestingly, in the case $B = 8$ (Fig.\,\ref{fig:infsglft_gamma}, left), the error started to grow after a certain number of iterations for $\kappa = 8$ and $\kappa = 10$; it is thus important to note that despite this error growth, the \emph{residual} within the CGNR method was decreasing. This is an indicator for the ill-posedness of the problem. In the case $B = 16$ (Fig.\,\ref{fig:infsglft_gamma}, right), this phenomenon was not observed. Here, in the case $\kappa = 6$, again no relative error of less than one was achieved. In the cases $\kappa = 8$ and $\kappa = 10$, on the other hand, the method was more successful.

\begin{figure}[t!]
 \centering
 \includegraphics[width=\textwidth]{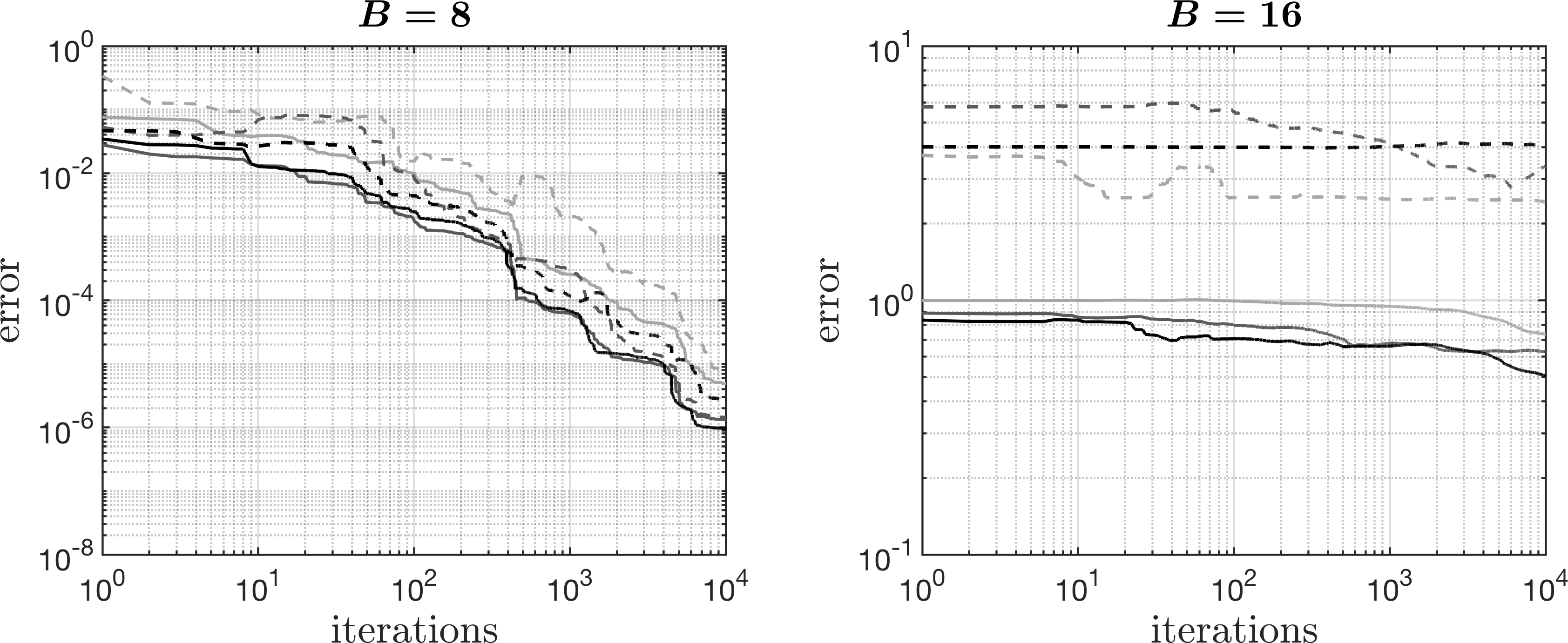}
 \caption{Maximum \textbf{(solid)} absolute and \textbf{(dashed)} relative error of the iNFSGLFT \textit{vs.}\ the number of iterations, with $\kappa = 5$ and \textbf{(light gray)} $N = 25$, \textbf{(gray)} $N = 50$, and \textbf{(black)} $N = 100$.}
 \label{fig:infsglft_N}\vskip\baselineskip
 \centering
 \includegraphics[width=\textwidth]{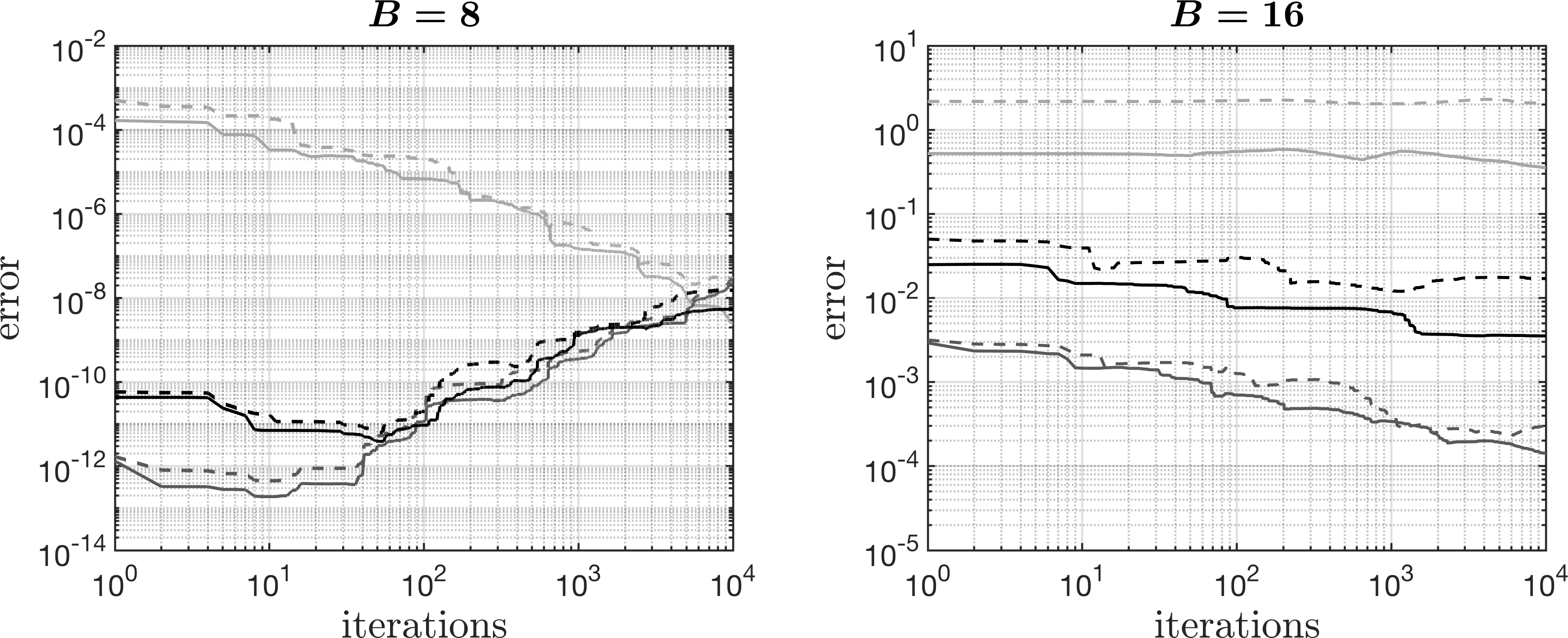}
 \caption{Maximum \textbf{(solid)} absolute and \textbf{(dashed)} relative error of the iNFSGLFT \textit{vs.}\ the number of iterations, with $N = 50$ and \textbf{(light gray)} $\kappa = 6$, \textbf{(gray)} $\kappa = 8$, and \textbf{(black)} $\kappa = 10$.} \label{fig:infsglft_gamma}
\end{figure}

In summary, the above results clearly demonstrate that the NFSGLFT is a practical class of fast algorithms that can offer a significant runtime advantage of less than half a minute as opposed to almost four hours in the case $B = 64$ and $M = 100\,000$, for example. The error of these approximating algorithms is relatively small, provided that the spreading width of the target points $x_i$\,--\,and thus the radial parameter $\rho$\,--\,is not too large. The iNFSGLFT constructed from the NFSGLFT and its adjoint achieved a good result in some cases, but further developments are necessary to improve the convergence behavior. The problem of slow convergence in the CGNR and CGNE methods is well known (cf.\ \citep[p.\,546]{golub_van_loan}). A starting point for further developments are the considerations and techniques in \citep[Chap.\,5]{kunis} and \citep{kircheis_potts}.
A possible explanation for the observed instability of our fast transforms 
w.\,r.\,t.\ the radial parameter $\rho$ is that, contrary to orthogonal polynomial recurrence on a compact subset of the real line, the forward and backward Laguerre recurrence is unstable for large arguments, because the interval containing the roots of the Laguerre polynomials increases with the degree. This means that for large degree, the polynomials may be absolutely small at fixed argument, but if this argument is also large, then the low-degree polynomials are bound to be absolutely large in comparison. This could be problematic because as the bandwidth $B$ increases, one might anticipate or even require more samples further from the origin in practice. Further research will address the interplay between the radial parameter $\rho$ and the bandwidth $B$.

\section*{Acknowledgements}
The author would like to thank the referees for their very valuable comments. The second referee provided the potential explanation for the observed instability of the transforms w.\,r.\,t.\ the radial parameter $\rho$ above, and suggested further research on the connection between the bandwidth $B$ and $\rho$. Furthermore, the author would like to thank J\"urgen Prestin and Daniel Potts for scientific discussion. 

\bibliography{references}
\bibliographystyle{my_apa}

\end{document}